\definecolor{mygreen}{rgb}{0.1,0.75,0.2}
\providecommand{\bbs}[1]{\left(#1\right)}
 \newtheorem{thm}{Theorem}[section]
 \newtheorem{lem}[thm]{Lemma}
 \newtheorem{prop}[thm]{Proposition}
 \theoremstyle{definition}
 \newtheorem{defn}{Definition}
 \theoremstyle{remark}
 \newtheorem{rem}{Remark}
 \numberwithin{equation}{section}
\newcommand{\la}{\langle}
\newcommand{\ra}{\rangle}
\newcommand{\pt}{\partial}
\newcommand{\ud}{\,\mathrm{d}}
\newcommand{\sL}{\mathcal{L}}
\newcommand{\bR}{\mathbb{R}}
\newcommand{\invm}{\mathcal{M}}                                  
\DeclareMathAlphabet{\mathbfsf}{\encodingdefault}{\sfdefault}{bx}{n}
\newcommand*{\vecc}[1]{\mathbfsf{#1}}                            
\begin{document}

\title{Structure preserving schemes for  Fokker-Planck equations of irreversible processes}

\author{Chen Liu\footnote{Department of Mathematics, Purdue University, West Lafayette, IN 47907, USA (liu3373@purdue.edu).} \  \ 
Yuan Gao\footnote{Department of Mathematics, Purdue University, West Lafayette, IN 47907, USA (gao662@purdue.edu). YG's research was supported by NSF grant DMS-2204288.} \  \  
	and \ Xiangxiong Zhang\footnote{Department of Mathematics, Purdue University, West Lafayette, IN 47907, USA (zhan1966@purdue.edu). XZ's research was supported by NSF grant DMS-1913120.} }

\maketitle

\begin{abstract}
In this paper, we construct structure preserving schemes for solving Fokker-Planck equations associated with irreversible processes. The proposed method is first order in time. We consider two structure-preserving spatial discretizations, which are second order and fourth order accurate finite difference schemes. They are derived via finite difference implementation of the classical  $Q^k$ ($k=1,2$) finite element methods on uniform meshes. Under mild mesh conditions and practical time step constraints, the schemes are proved monotone, thus are positivity-preserving and energy dissipative. In particular, our scheme is suitable for capturing steady state solutions in large final time simulations.
\\ \\
\textbf{Key words}. Fokker-Planck equation, finite difference, monotonicity, positivity, energy dissipation, high-order accuracy
\\
\textbf{AMS subject classifications}. 65M06, 65M12, 65M60
\end{abstract}

\section{Introduction}
Irreversible drift-diffusion processes  are a class of important stochastic processes in physics and chemistry. For instance, an irreversible drift-diffusion process can model non-equilibrium biochemical reactions, which possess non-equilibrium steady states (NESS). The most important features for non-equilibrium reactions are nonzero fluxes and positive entropy production rate at NESS.  These features in an irreversible biochemical reaction maintain a circulation at NESS and we refer to pioneering studies by \textsc{Prigogine} \cite{prigogine1968introduction}. 
Let $\vec{b}: \bR^d \to \bR^d$ be a general drift field depending only on the state variable ${\vec{x}}\in \bR^d$. 
Consider a stationary   drift-diffusion process with  white noise $B_t$ that satisfies a stochastic differential equation (SDE) for ${\vec{x}}_t\in \bR^d$
\begin{equation}\label{sde-x}
\ud {\vec{x}}_t = \vec{b}({\vec{x}}_t) \ud t + \sqrt{2}\sigma  \ud B_t.
\end{equation}
In general, $\sigma$ is a   noise matrix and 
 $D:=\sigma \sigma^T\in \bR^{d \times d}$. 
For simplicity, in this paper we only discuss the simple case where $D>0$ is a constant number.  
By Ito's formula, the corresponding Fokker-Planck equation  for SDE \eqref{sde-x}, i.e., the Kolmogorov forward equation   for    density $\rho({\vec{x}}, t)$, is given by 
\begin{equation}\label{FP-N}
\pt_t \rho = \sL^* \rho:=-\nabla\cdot(\vec{b} \rho)+ \nabla \cdot  ( D\nabla \rho ),
\end{equation}
In terms of \eqref{FP-N}, the irreversibility means that one cannot find an invariant measure $\pi$ such that the generator $\sL$ is symmetric in $L^2(\pi)$. Irreversibility has many equivalent characterizations. One is equivalent to that it is impossible to write the drift $\vec b$ in a potential form $\vec{b}= -D \nabla \varphi$ for any potential $\varphi$. See another equivalent irreversibility condition \eqref{Fs}.
Irreversible processes and the associated Fokker-Planck equations can be used to describe more general dynamic processes, such as the biochemical reactions with non-equilibrium steady state \cite{hill2005free, QianBook, GL22}, they can also be used as sampling acceleration and variance reduction \cite{turitsyn2011irreversible, duncan2016variance}. 
\par
In this paper, we focus on studying second and fourth order in space numerical schemes for \eqref{FP-N} with a general drift field $\vec{b}$. We will prove the proposed high-order schemes preserve (\emph{i}) the conservation of total mass, (\emph{ii}) the positivity of $\rho$, (\emph{iii}) the energy dissipation law with respect to $\phi$-entropy, and (\emph{iv}) the exponential convergence to equilibrium $\invm$. 
To be more precise, we consider the equation \eqref{FP-N} in a bounded domain $\Omega\subset \bR^d$ with no-flux boundary condition
\begin{equation}\label{BC}
-\rho\vec{b} \cdot \vec{n} + D \nabla \rho \cdot \vec{n} = 0 \quad \text{ on }\Gamma, 
\end{equation}
where $\vec{n}$ is the unit outer normal of the boundary $\Gamma=\pt \Omega$.
Let $\rho^0({\vec{x}})$ be the initial condition to \eqref{FP-N}.
Under the no-flux boundary condition, it is easy to verify the conservation of total mass
\begin{equation}\label{mass}
\int_\Omega \rho \ud { x} 
= \int_\Omega \rho^0 \ud { x}.
\end{equation} 
\par
Designing structure preserving high order numerical schemes for the irreversible Fokker-Planck equation \eqref{FP-N} is very important, not only because the irreversible processes are able to describe lots of fundamental non-equilibrium behaviors, such as circulations at NESS in an ecosystem, but also because of a general drift field is commonly used to construct acceleration or control for a given stochastic process or for a process constructed from discretization of irreversible Fokker-Planck equations; see for instance the optimally controlled transition path computations \cite{weinan2006, zhou2016, gao2022data,  gao2020transition, wei2022optimal} and the accelerated sampling and optimization \cite{turitsyn2011irreversible, duncan2016variance, ZWE, yg20}. 

\subsection{Invariant measure $\invm$ and $\invm$-symmetric decomposition}
Assume that there exists a positive invariant  measure $\invm\in \mathcal{C}^1(\bar{\Omega})$, $\int \invm \ud x =1$ and $\invm {\geq\epsilon_0}>0$. $\invm$ satisfies the static equation
\begin{equation}\label{pi}
\nabla \cdot F^s := \nabla \cdot \bbs{ \vec{b} \invm- D \nabla\invm }  = 0,
\end{equation}
and the same no-flux boundary condition 
\begin{equation}\label{BCm}
-\invm\vec{b} \cdot \vec{n}  + D \nabla \invm \cdot \vec{n} = 0 \quad \text{ on }\Gamma.
\end{equation}
For our compact domain $\bar{\Omega}$, such an invariant measure always exists, cf.  \cite{cattiaux1992stochastic}.
The irreversibility is then equivalently characterized as that the steady flux is not pointwisely zero
 \begin{equation}\label{Fs} 
F^s =   \vec{b} \invm- D \nabla\invm \neq 0.
\end{equation}
The special invariant measure such that $F^s=0$ is also called a detailed balanced invariant measure.

\par
Leveraging the existence of the positive invariant measure $\invm$, one can utilize $\invm$ to decompose the irreversible Fokker-Planck equation \eqref{FP-N} into a dissipative part and a conservative part. The dissipative part is a gradient flow, which represents the decay from any initial density to the invariant measure. Meanwhile, the conservative part preserves the total energy and maintains a nonzero equilibrium flux \cite{hill2005free, QianBook}.  
To this end, let us derive the $\invm$-symmetric decomposition and the associated energy dissipation relation.

\par
With a positive invariant measure $\invm$, we decompose \eqref{FP-N} into the sum of a gradient flow part and a Hamiltonian flow part
\begin{equation}\label{pi-decom}
\begin{aligned}
\pt_t \rho &= \nabla \cdot \bbs{D \nabla \rho - \vec{b} \rho}
= \nabla \cdot \bbs{ D \invm \nabla \frac{\rho}{\invm} +  \frac{\rho}{\invm} \bbs{D\nabla \invm -   \invm \vec{b}} }\\
 &=  \nabla \cdot \bbs{ D \invm \nabla \frac{\rho}{\invm}} +     \bbs{D\nabla \invm -   \invm \vec{b}} \cdot \nabla \frac{\rho}{\invm}.
\end{aligned}
\end{equation}
From \eqref{pi}, we know 
\begin{equation}
\vec{u}:= D\nabla \invm - \invm \vec{b}, \quad \nabla \cdot \vec{u}=0 \quad \text{ in }\Omega.
\end{equation}
The reversibility condition for the drift-diffusion process becomes $\vec{u}\equiv 0$, but we focus on more general case that $\vec{u}\neq 0$. Using the notation $\vec{u}= D\nabla \invm - \invm \vec{b}$ and \eqref{BCm}, we know 
\begin{equation}\label{ttt}
\vec{u}\cdot \vec{n} = 0 \quad \text{ on } \Gamma.
\end{equation}
 By the exactly same decomposition in \eqref{pi-decom}, the  no-flux boundary condition \eqref{BC}
becomes
 \begin{equation}\label{bcbc}
 \bbs{D \invm \nabla \frac{\rho}{\invm} +  \frac{\rho}{\invm} \bbs{D\nabla \invm -   \invm \vec{b}}} \cdot \vec{n} =0 \quad \text{ on } \Gamma.
 \end{equation}
 This, together with \eqref{ttt}, implies $D\invm  \nabla \frac{\rho}{\invm} \cdot \vec{n} = 0$ on $\Gamma$.
Thus, we conclude that 
\begin{equation}
D\invm  \nabla \frac{\rho}{\invm} \cdot \vec{n} = 0, \qquad \vec{u}\cdot \vec{n} = 0 \quad \text{ on } \Gamma.
\end{equation}
\par
For certain applications, in the case of the invariant measure $\invm>0$ is given and satisfies \eqref{pi} and \eqref{BCm}, we can utilize it to construct a numerical scheme for the following equation, which stems from above decomposition, in conservative form:
\begin{equation}\label{model1}
\begin{aligned}
\partial_t\rho = \nabla\cdot\left(D\invm\nabla{\frac{\rho}{\invm}}\right) + \nabla\cdot{\left(\vec{u}\,\frac{\rho}{\invm}\right)}& && \text{in}~\{t>0\}\times\Omega,\\
\rho = \rho^0& && \text{in}~\{t=0\}\times\Omega,\\
D\invm \left(\nabla{\frac{\rho}{\invm}}\right)\cdot\vec{n} = 0, \quad \vec{u}\cdot \vec{n}=0& && \text{on}~\{t>0\}\times\partial{\Omega}.
\end{aligned}
\end{equation}
Let us refer to \eqref{model1} as {  \textbf{Model~1}.} 
In this model, we highlight the positive invariant measure $\invm$ is prescribed and the vector field $\vec{u}$ is a given time-independent continuously differentiable function, which satisfies $\nabla \cdot \vec{u}=0$. In general, this $\vec{u}$ could be prescribed directly, or computed from the original drift $\vec{b}$ in \eqref{FP-N}. 
\par
Let us show the energy dissipation relation of \textbf{Model~1.}
Define operators $L^\ast$ and $T$ as follows
\begin{align}\label{defLT}
{L^\ast}:= \nabla \cdot\bbs{D \invm \nabla}, \quad
{T}:= (D\nabla \invm - \invm \vec{b}) \cdot \nabla=\vec{u}\cdot \nabla.
\end{align}
Using the boundary condition in \eqref{model1}, it is easy to check that $L^*$ is a symmetric and nonnegative operator in $L^2(\Omega)$, namely
\begin{equation}
(f, L^\ast g) = (L^\ast f, g), \quad 
(f, L^\ast f) \leq 0, \quad 
\forall f,g\in L^2(\Omega) \,\,\mbox{satisfying}\,\, D\invm \nabla f \cdot \vec{n}\big|_{\Gamma}=0. 
\end{equation}
Here, the notation $(\cdot, \cdot)$ denotes the standard $L^2$-inner product.
From the properties of $\vec{u}$, it is straightforward to see the operator $T$ is asymmetric in $L^2(\Omega)$, namely
\begin{equation}\label{antiT}
(f, T g) = -(T f, g), \quad  (f, T f) =0, \quad \forall  f,g\in L^2(\Omega).
\end{equation}
Therefore, define free energy $E := \int \phi\bbs{\frac{\rho}{\invm}} \invm \ud x$, for any convex function $\phi$, we have the following energy dissipation relation. 
\begin{equation}\label{energy1}
\begin{aligned}
\frac{\ud E}{\ud t} &= (\phi'\bbs{\frac{\rho}{\invm}}, \pt_t \rho)
= (\phi'\bbs{\frac{\rho}{\invm}}, ({ L^*} +{ T}) \frac{\rho}{\invm})\\
&= -\int \phi''\bbs{\frac{\rho}{\invm}}\invm \nabla \frac{\rho}{\invm}\cdot D \nabla \frac{\rho}{\invm} \ud {x}
\leq 0.
\end{aligned}
\end{equation}
Notice, in above, we used the identity
\begin{equation*}
(\phi'\bbs{\frac{\rho}{\invm}}, \vec{u}\cdot \nabla \frac{\rho}{\invm}) = (\vec{u}, \nabla \phi\bbs{\frac{\rho}{\invm}}) =0,
\end{equation*}
which is due to the integration by parts and properties $\nabla \cdot \vec{u}=0$ and $\vec{u}\cdot\vec{n}\big|_{\Gamma}=0$.
This energy dissipation law was first observed by \cite{bodineau2014lyapunov}.
In the case of $\phi(x)=(x-1)^2$, the \eqref{energy1} reduces to the following energy dissipation law with respect to the Pearson $\chi^2$-divergence
\begin{equation}\label{energy2}
\frac{\ud}{\ud t} \int_\Omega \frac{(\rho-\invm)^2}{\invm} \ud { x}
=\frac{\ud}{\ud t} \int_\Omega \frac{\rho^2}{\invm} \ud { x} 
= -2D \int_\Omega \invm \Big| \nabla \frac{\rho}{\invm} \Big|^2 \ud { x}
\leq 0.
\end{equation}
In addition, from the Poincare's inequality in $L^2(\Omega)$ with $u=\frac{\rho}{\invm}-1$, there exists a constant $c$, such that,
\begin{equation}\label{poincare}
\int_{\Omega} |u|^2 \invm \ud { x} \leq c (\invm \nabla u, D \nabla u).
\end{equation}
We obtain
\begin{equation}\label{gap}
\frac{\ud}{\ud t} \int_\Omega \frac{(\rho-\invm)^2}{\invm} \ud { x} 
= -2D \int_\Omega \invm \Big| \nabla \frac{\rho}{\invm} \Big|^2 \ud x  
\leq -\frac{2D}{c} \int_\Omega \frac{(\rho-\invm)^2}{\invm} \ud x .
\end{equation}
Then, by Gronwall's inequality, the above inequality gives the exponential decay from dynamic solution $\rho$ to the equilibrium $\invm$,
\begin{equation}\label{exp_gap}
\int_\Omega {(\frac{\rho}{\invm}-1)^2} \invm \ud x 
= \int_\Omega \frac{(\rho-\invm)^2}{\invm} \ud x  
\leq e^{-\frac{2D}{c} t}\int_\Omega \frac{(\rho^0-\invm)^2}{\invm} \ud x. 
\end{equation}
The ergodicity result above  implies that as long as a positive invariant measure $\invm$ exists, although sometimes its explicit form may not be known, one still can solve it as an equilibrium solution to the original model with no-flux boundary condition \eqref{BC}, i.e., the invariant measure $\mathcal M$ can be obtained from computing the steaty state solution of the following problem 
\begin{align*}
\partial_t\rho = \nabla\cdot(D \nabla{\rho}) - \nabla\cdot(\vec{b}\rho)& && \text{in}~\{t>0\}\times\Omega,\\
\rho = \rho^0& && \text{in}~\{t=0\}\times\Omega,\\
-\rho\vec{b} \cdot \vec{n}  + D \nabla \rho \cdot \vec{n} = 0& && \text{on}~\{t>0\}\times\partial{\Omega}.
\end{align*}
Let us refer the above equation as \textbf{Model~2}, or the conservative form without decomposition.
The vector field $\vec{b}$ is a prescribed time-independent continuously differentiable function. Here, we emphasis $\vec{b}$ can be any general drift field.
In order to produce a non-negative invariant measure, a positivity-preserving numerical scheme for solving \textbf{Model~2} is also preferred.

\subsection{State of the art}
The computational methods for solving the Fokker-Planck equation \eqref{FP-N} or general convection-diffusion equations have been extensively investigated. 
Without being exhaustive, we mention several pioneering studies. 
A well-known positivity-preserving finite volume scheme was proposed by Scharfetter and Gummel for some one-dimensional semiconductor device equations \cite{scharfetter1969large}. 
Extending the Scharfetter-Gummel finite volume scheme to a variety of drift terms, which include the irreversible expression \eqref{FP-N}, and to different boundary conditions, is studied in \cite{markowich1985stationary, markowich1988inverse,  bank1998finite, xu1999monotone, chainais2003finite, bessemoulin2012finite}.  
These numerical schemes are first order accurate structure-preserving and enjoy many good properties, such as preserving positivity and dissipating energy; see also the underlying Markov process structure for upwind schemes in \cite{delarue2011probabilistic, GL21}. 
Besides the accuracy and positivity, when numerically solving Fokker-Planck equations, the large time convergence to the invariant measure is also essential.   The ergodicity for general irreversible process described by \eqref{FP-N} and the equivalence with the corresponding reversible process (with a special drift in gradient form) were proved in \cite{chen2000equivalence}; see also  reviews in \cite{arnold2001convex} and \cite{bodineau2014lyapunov} for the analysis including mixed boundary values. Designing schemes that also preserve the large time convergence to the invariant measure, particularly for the general irreversible process without gradient structure, has been attracting lots of attentions, e.g., see \cite{li2020large} for Fokker-Planck equation in the whole space, and see \cite{filbet2017finite, chainais2020large} and the references therein for boundary-driven convection-diffusion problems. 
To the best knowledge of the authors, high order accurate schemes for solving Fokker-Planck equation \eqref{FP-N} with a general drift field,  which preserve all the desired properties, such as positivity, energy dissipation relation, and particularly the exponential convergence to the invariant measure, are still not available in literatures. High order schemes for Fokker-Planck equation with gradient flow structure was proved in \cite{hu2021positivity} and for generalized Allen-Cahn equation was proved in \cite{shen2021discrete}. 
Although, a comprehensive review on various applications of Fokker-Planck equation brought by irreversible stochastic processes is out of the scope of the present paper, we highlight the general irreversible processes and the processes constructed from numerical schemes for the corresponding Fokker-Planck equations have extensively important applications, which include but not limited to the transition path computations \cite{weinan2006, zhou2016, li2017large,  tao2018hyperbolic, gao2020transition, wei2022optimal} and the accelerated sampling and optimization \cite{turitsyn2011irreversible, duncan2016variance, ZWE, yg20, ye2021efficient}.

\subsection{Main results, methodology, and contributions}
In general, for the Fokker-Planck equations with generic drift terms, it is nontrivial to construct high-order accurate numerical schemes that can preserve all the following structures:
(\emph{i}) mass conservation law \eqref{mass};
(\emph{ii}) energy dissipation relation \eqref{energy2};
(\emph{iii}) well-balancedness, i.e., numerical equilibrium recovers given invariant measure $\invm$; and
(\emph{iv}) ergodicity/spectral gap estimate \eqref{exp_gap}.
In this paper, we focus on constructing and analyzing second and fourth order in space numerical schemes via finite difference implementation of the finite element method for solving Fokker-Planck equations, mainly for \textbf{Model~1}. We will obtain all the good properties (i)-(iv) in the fully discrete second and fourth order schemes for \textbf{Model~1}.
\par
Our algorithms enjoy desired numerical properties. 
Benefiting from the inherent nature of finite element method, the discrete mass conservation law is satisfied naturally.
If the matrix of the linear system in the backward Euler time discretization is a monotone matrix, i.e., its inverse matrix has non-negative entries, then 
we call such schemes {\it monotone} schemes.
We show that the schemes for \textbf{Model~1} are monotone under practical mesh conditions and time step constraints in Section~\ref{sec:monotonicity}, thus both the positivity of the numerical solution and the discrete energy dissipation law for any convex function $f$  hold. Define discrete energy  as
\begin{align}\label{eq:section1_review_energy}
E^n = \sum_{i} \omega_i \invm_i f(\frac{\rho^n_i}{\invm_i}).
\end{align}
Then the quantity $E^n$ is non-increasing with respect to time step $n$. In particular, by selecting the convex function $f(x) = (x-1)^2$, we show discrete Pearson $\chi^2$-divergence energy dissipation law.
Finally, the invariant measure is also recovered {with an exponential convergence rate}. 
For the definition of notation in \eqref{eq:section1_review_energy} and more details on related proofs are shown in Section~\ref{sec:positivity_and_dissipation}.
 
When $\vec u\equiv 0$,  the scheme in this paper for  \textbf{Model 1} reduces to the scheme for the Fokker-Planck equation in \cite{hu2021positivity}, thus the monotonicity discussion of the fourth order scheme is similar to those in \cite{hu2021positivity}. However, due to the extra term $\nabla\cdot{\left(\vec{u}\,\frac{\rho}{\invm}\right)}$ in \textbf{Model 1}, the monotonicity  discussion 
in Section~\ref{sec:monotonicity} is not only necessary but also nontrivial. More importantly, the mesh size and time step constraints for monotonicity in Section~\ref{sec:monotonicity} are simpler than those in \cite{hu2021positivity}, even for the case $\vec u\equiv 0$, which is another contribution of this paper. 

\subsection{Organization of the paper}
The rest of this paper is organized as follows. 
In Section~\ref{sec:numerical_scheme}, we introduce our numerical schemes, which are constructed by finite difference implementation of $Q^1$ and $Q^2$ continuous finite element methods.
In Section~\ref{sec:monotonicity}, we show the monotonicity of our second-order and fourth-order schemes in one and two dimension. 
The system matrices from our fourth order schemes no longer hold the M-matrix structure, however, we still obtain the monotonicity under simple sufficient conditions. 
The structure-preserving properties are discussed in Section~\ref{sec:positivity_and_dissipation}. 
Numerical experiment validations are in Section~\ref{sec-test}.  
Concluding remarks are given in Section~\ref{sec-remark}.

\section{The numerical schemes}\label{sec:numerical_scheme}

Consider a rectangular computational domain $\Omega\subset\bR^d$ ($d=1$ or $2$) with unit outward normal $\vec{n}$. 
Uniformly partition the time interval $[0,T]$ into $N_\mathrm{st}$ subintervals. Let $\Delta{t} = T/N_\mathrm{st}$ denote the time step size.
We discretize two model problems in the previous section with the only unknown density $\rho$.

\subsection{A first order accurate time discretization}
Let $\rho^n$ denote the solution at time step $n$. For Model~1 with a given incompressible $\vec u$ satifying $\vec u\cdot\vec n=0$ along the boundary $\partial\Omega$ and a given invariant measure $\mathcal M$, we consider the following  backward Euler time discretization:
\begin{equation}\label{ttm1}
\rho^{n+1} =\rho^{n}+\Delta t \,\nabla\cdot\left(D\invm\nabla{\frac{\rho^{n+1}}{\invm}}\right) +\Delta t\, \nabla\cdot{\left(\vec{u}\,\frac{\rho^{n+1}}{\invm}\right)}.
\end{equation}
For convenience, introduce an auxiliary variable $g:=\frac{\rho}{\mathcal M}$, then $g^{n+1}$ can be computed as follows:
\begin{subequations}
\label{eq:model1_time_model1}
\begin{align}
\invm g^{n+1} - \Delta{t}\,\nabla\cdot(D \invm \nabla{g^{n+1}}) - \Delta{t}\,\nabla\cdot{(\vec{u}g^{n+1})} = \invm g^{n} \quad\text{in}~\Omega,\label{eq:model1_time1}\\
D\invm \nabla{g^{n+1}}\cdot\vec{n} = 0, \quad \vec{u}\cdot\vec{n}=0 \quad\text{on}~\partial\Omega.\label{eq:model1_time2}
\end{align}  
\end{subequations}
For solving Model~2 with given $\vec b$, the same backward Euler time discretization is given as
\begin{subequations}\label{eq:model1_time_model2}
\begin{align}
\rho^{n+1} - \Delta{t}\,\nabla\cdot(D \nabla{\rho}^{n+1}) + \Delta{t}\,\nabla\cdot(\vec{b}\rho^{n+1}) = \rho^n  \quad\text{in}~\Omega,\label{eq:model2_time1}\\
-\rho^{n+1}\vec{b} \cdot \vec{n} + D \nabla \rho^{n+1} \cdot \vec{n} = 0  \quad\text{on}~\partial\Omega.\label{eq:model2_time2}
\end{align}
\end{subequations}
Although we do not have explicit formula for $\invm$ in the most general case Model 2, for compact domain, we still know the existence of such an invariant measure and thus we can use $\invm$ to obtain the stability and energy dissipation relation. Similar to the derivation of \eqref{pi-decom},  
we recast \eqref{eq:model2_time1} as
\begin{equation}
\rho^{n+1} - \Delta{t}\,\nabla\cdot(D \invm \nabla \frac{{\rho}^{n+1}}{\invm}) - \Delta{t}\,\nabla\cdot(\vec{u}\frac{\rho^{n+1}}{\invm}) = \rho^n \quad\text{in}~\Omega,
\end{equation}
which is exactly the same as \eqref{ttm1}. Therefore, thanks to the existence of $\invm $ and the fact that $\invm $-decomposition reduces Model 2 to Model 1, the stability analysis and energy dissipation law for the backward Euler scheme of  \textbf{Model~2} can also be derived, with an unknown function $\invm$.
We have the following proposition for the time discretization \eqref{ttm1} and \eqref{eq:model1_time_model2}.
\begin{prop}\label{prop_time}
Let the invariant measure satisfying \eqref{pi} and \eqref{BCm} be $\invm$, which has no explicit formula for Model~2. The backward Euler time discretization \eqref{ttm1} for \textbf{Model~1} and \eqref{eq:model1_time_model2} for \textbf{Model~2} satisfy the stability estimate
 \begin{equation}\label{stability}
 \frac{1}{\Delta{t}}\int_\Omega \frac{(\rho^{n+1})^2}{\invm} 
- \frac{1}{\Delta{t}}\int_\Omega \frac{(\rho^{n})^2}{\invm}
\leq - 2D\,\int_\Omega \invm\left|\nabla{\,\frac{\rho^{n+1}}{\invm}}\right|^2,
 \end{equation}
 and the exponential decay of energy 
 \begin{equation}\label{exp_tn_00}
\int_\Omega \frac{(\rho^{n} - \invm)^2}{\invm} \leq (1-\beta)^n \int_\Omega \frac{(\rho^{0} - \invm)^2}{\invm} \leq e^{-\beta n}\int_\Omega \frac{(\rho^{0} - \invm)^2}{\invm},
\end{equation}
where $\beta = \frac{2D \Delta t}{c+ 2D \Delta t}>0.$
 \end{prop}
 \begin{proof}
 First, since the invariant measure $\invm$ always exists due to the ergodicity of the continuous Fokker-Planck equation on a compact domain, the \eqref{eq:model1_time_model2} can be recast as \eqref{ttm1}; as we mentioned above. 
 
 Second, for \eqref{ttm1} (i.e., \eqref{eq:model1_time_model1} in terms of $g^{n+1}=\frac{\rho^{n+1}}{\invm}$,  we prove the stability estimate \eqref{stability}.  Multiplying \eqref{eq:model1_time1} by $g^{n+1}$ and integrating in $\Omega$, we have
 \begin{equation}
 \int_\Omega \invm |g^{n+1}|^2 + \Delta t \la D\invm \nabla g^{n+1}, \nabla g^{n+1} \ra = \int_\Omega  \invm g^n g^{n+1}  \leq \int_\Omega \invm \bbs{\frac{|g^{n+1}|^2}{2} + \frac{|g^{n}|^2}{2}}. 
\end{equation}
Here in the first equality, we used \eqref{antiT} and integration by parts with boundary condition \eqref{bcbc}.  Therefore, we obtain
\begin{equation}
\int_\Omega \invm |g^{n+1}|^2 + 2D \Delta t \int_\Omega \invm |\nabla g^{n+1}|^2  \leq \int_\Omega \invm |g^{n}|^2, 
\end{equation} 
and thus \eqref{stability} holds.

Third, from the mass conservation $\int_\Omega \rho^0=\int_\Omega \mathcal M $, we know $\int \frac{\rho^2}{\invm} dx -1 = \int \frac{(\rho-\invm)^2}{\invm} dx.$ and thus \eqref{stability} becomes
 \begin{equation} 
 \frac{1}{\Delta{t}}\int_\Omega \frac{(\rho^{n+1}-\invm)^2}{\invm} 
- \frac{1}{\Delta{t}}\int_\Omega \frac{(\rho^{n}-\invm)^2}{\invm}
\leq - 2D\,\int_\Omega \invm\left|\nabla{\,\frac{\rho^{n+1}}{\invm}}\right|^2.
 \end{equation}
Then by Poincare's inequality \eqref{poincare} and the inequality in \eqref{gap}, we obtain
\begin{equation}
\frac{1}{\Delta{t}}\int_\Omega \frac{(\rho^{n+1}-\invm)^2}{\invm} 
- \frac{1}{\Delta{t}}\int_\Omega \frac{(\rho^{n}-\invm)^2}{\invm} \leq -\frac{2D}{c} \int_\Omega \frac{(\rho^{n+1}-\invm)^2}{\invm}.
\end{equation}
 Repeating
$\int_\Omega \frac{(\rho^{n} - \invm)^2}{\invm} \leq \frac{1}{1+ \frac{2D}{c}\Delta t} \int_\Omega \frac{(\rho^{n-1} - \invm)^2}{\invm}$ for $n$ times we obtain \eqref{exp_tn_00}.
 \end{proof}
   
   On the other hand, without the explicit expression of $\invm$, we cannot obtain similar theoretical results for the full discretization of Model 2 beyond the positivity and mass conservation.   Without having any explicit information on the invariant measure, it is  challenging to  capture a steady solution by directly solving the dynamic equation in Model 2. For the above two reasons, in the rest of the paper, we only focus on the analysis of the scheme \eqref{eq:model1_time_model1} for Model 1.

\subsection{The continuous $Q^k$ finite element method for spatial derivatives}
Given the function $g^n$, the semi-discrete scheme \eqref{eq:model1_time_model1} is a variable coefficient elliptic equation for $g^{n+1}$ with homogeneous Neumann boundary conditions. 
Let $(\cdot,\cdot)$ denote the $L^2$ inner product on $\Omega$. After multiplying a test function $\phi \in H^1(\Omega)$ and integration by parts, the equivalent variational formulation for the unknown $g^{n+1}\in H^1(\Omega)$ to the equation \eqref{eq:model1_time_model1} can be written as:
\[(\invm g^{n+1}, \phi) 
+ \Delta{t}\,(D\invm\nabla{g^{n+1}}, \nabla{\phi})
+ \Delta{t}\,(\vec{u} g^{n+1}, \nabla{\phi}) 
= (\invm g^{n}, \phi), 
\quad \forall \phi \in H^1(\Omega).\]

Next we consider the finite element method for the spatial operators. 
Let $\mathcal{E}_h = \{E\}$ denote a uniform rectangular partition of the rectangular computational domain $\Omega$.
For any integer $k\geq1$, let $\mathbb{Q}^k(E)$ be the space of tensor product polynomials of degree at most $k$. As an example, for two dimensions,
\begin{align*}
\mathbb{Q}^k(E) = \left\{p(x,y):~ p(x,y) = \sum_{i=0}^k\sum_{j=0}^k p_{ij}x^i y^j,~ (x,y)\in E\right\}.
\end{align*}
The continuous piecewise $\mathbb{Q}^k$ polynomial space $V^h\subset H^1(\Omega)$ is defined by
\begin{align*}
V^h = \{v_h\in C(\Omega):~ v_h|_{E}\in \mathbb{Q}^k(E),~\forall E\in\mathcal{E}_h\}.
\end{align*}
Given $g_h^n \in V^h$, 
the $Q^k$ finite element method is to find $g_h^{n+1}\in V^h$ satisfying 
\begin{align}\label{eq:Model1_varform}
(\invm g_h^{n+1}, \phi_h) 
+ \Delta{t}\,(D\invm\nabla{g_h^{n+1}}, \nabla{\phi_h})
+ \Delta{t}\,(\vec{u} g_h^{n+1}, \nabla{\phi_h}) 
= (\invm g_h^{n}, \phi_h), 
\quad \forall \phi_h \in V^h.
\end{align} 
 
Denote $\rho_h^n = \invm g_h^n$. 
The finite element scheme \eqref{eq:Model1_varform}  has the following properties.
\begin{prop}
For any $n\geq0$, the total mass is conserved in the scheme \eqref{eq:Model1_varform}, namely
\begin{align}\label{eq:dis_mass_conv}
\int_\Omega \rho^n_h = \int_\Omega \rho^0_h.
\end{align}
\end{prop}
\begin{proof}
Take $\phi_h = 1$ in \eqref{eq:Model1_varform}, then for any $n\geq0$, the identity $(\invm g_h^{n+1}, 1) = (\invm g_h^{n}, 1)$ holds. 
\end{proof}
\noindent 
\begin{prop}
For any $n\geq0$, the following two inequalities hold in  the scheme \eqref{eq:Model1_varform},
\begin{subequations}
\begin{align}
\frac{1}{\Delta{t}}\int_\Omega \frac{(\rho^{n+1}_h)^2}{\invm} 
- \frac{1}{\Delta{t}}\int_\Omega \frac{(\rho^{n}_h)^2}{\invm}
\leq - 2D\,\int_\Omega \invm\left|\nabla{\,\frac{\rho^{n+1}_h}{\invm}}\right|^2, \label{eq:dis_energy_ineq1}\\
\frac{1}{\Delta{t}}\int_\Omega \frac{(\rho^{n+1}_h - \invm)^2}{\invm} 
- \frac{1}{\Delta{t}}\int_\Omega \frac{(\rho^{n}_h - \invm)^2}{\invm}
\leq - 2D\,\int_\Omega \invm\left|\nabla{\,\frac{\rho^{n+1}_h}{\invm}}\right|^2. \label{eq:dis_energy_ineq2}
\end{align}
\end{subequations}
\end{prop}
\begin{proof}
Consider $\invm>0$ is time independent and $D>0$ is constant. For any $\phi_h \in V^h$, let us rewrite \eqref{eq:Model1_varform} into the following form
\begin{align}\label{eq:dis_energy_1}
(\sqrt{\invm} g_h^{n+1} - \sqrt{\invm} g_h^{n}, \sqrt{\invm} \phi_h) 
+ \Delta{t}\,(\vec{u} g_h^{n+1}, \nabla{\phi_h}) 
= - \Delta{t}D\,(\sqrt{\invm}\nabla{g_h^{n+1}}, \sqrt{\invm}\nabla{\phi_h}).
\end{align}
Take $\phi_h = g_h^{n+1}\in V^h$ in \eqref{eq:dis_energy_1}. By formula $(a-b)a = \frac{1}{2}a^2 - \frac{1}{2}b^2 + \frac{1}{2}(a-b)^2$, the first term on the left-hand side of above becomes
\begin{align}\label{eq:dis_energy_2}
(\sqrt{\invm} g_h^{n+1} - \sqrt{\invm} g_h^{n}, \sqrt{\invm} g_h^{n+1})
&= \frac{1}{2}(\sqrt{\invm} g_h^{n+1},\sqrt{\invm} g_h^{n+1}) 
- \frac{1}{2}(\sqrt{\invm} g_h^{n}, \sqrt{\invm} g_h^{n})\nonumber\\
&+ \frac{1}{2}(\sqrt{\invm} g_h^{n+1} - \sqrt{\invm} g_h^{n}, \sqrt{\invm} g_h^{n+1} - \sqrt{\invm} g_h^{n}).
\end{align}
For the second term on the left-hand side of \eqref{eq:dis_energy_1}, we have
\begin{align*}
\Delta{t}\,(\vec{u} g_h^{n+1}, \nabla{g_h^{n+1}})
= \Delta{t}\,\int_\Omega \vec{u} \cdot g_h^{n+1}\nabla{g_h^{n+1}}
= \frac{\Delta{t}}{2}\,\int_\Omega \vec{u}\cdot\nabla{|g_h^{n+1}|^2}.
\end{align*}
Since the field $\vec{u}$ is incompressible, we have
\begin{align*}
\Delta{t}\,(\vec{u} g_h^{n+1}, \nabla{g_h^{n+1}})
= \frac{\Delta{t}}{2}\,\int_\Omega \Big(\vec{u}\cdot\nabla{|g_h^{n+1}|^2} + (\nabla\cdot{\vec{u}})|g_h^{n+1}|^2\Big)
= \frac{\Delta{t}}{2}\,\int_\Omega \nabla\cdot{(\vec{u}\,|g_h^{n+1}|^2)}.
\end{align*}
By condition $\vec{u}\cdot\vec{n} = 0$, see the last equation in \eqref{model1}, we have
\begin{align}\label{eq:dis_energy_3}
\Delta{t}\,(\vec{u} g_h^{n+1}, \nabla{g_h^{n+1}})
= \frac{\Delta{t}}{2}\,\int_{\partial{\Omega}} |g_h^{n+1}|^2\,(\vec{u}\cdot\vec{n}) = 0.
\end{align}
Thus, select $\phi_h = g_h^{n+1}\in V^h$, substitute \eqref{eq:dis_energy_2} and \eqref{eq:dis_energy_3} into \eqref{eq:dis_energy_1}, we get the following inequality  
\begin{align}\label{eq:dis_energy_4}
&\frac{1}{2}(\sqrt{\invm} g_h^{n+1},\sqrt{\invm} g_h^{n+1}) 
- \frac{1}{2}(\sqrt{\invm} g_h^{n}, \sqrt{\invm} g_h^{n}) \nonumber\\
=&\, - \Delta{t}D\,(\sqrt{\invm}\nabla{g_h^{n+1}}, \sqrt{\invm}\nabla{g_h^{n+1}})
-\frac{1}{2}(\sqrt{\invm} g_h^{n+1} - \sqrt{\invm} g_h^{n}, \sqrt{\invm} g_h^{n+1} - \sqrt{\invm} g_h^{n})
\nonumber\\
\leq&\, - \Delta{t}D\,(\sqrt{\invm}\nabla{g_h^{n+1}}, \sqrt{\invm}\nabla{g_h^{n+1}}).
\end{align}
Recall that $\rho^{n+1}_h = \invm g_h^{n+1}$, which is equivalent to $g_h^{n+1} = \rho^{n+1}_h/\invm$. Multiply \eqref{eq:dis_energy_4} by $2/\Delta{t}$ on both side, we obtain \eqref{eq:dis_energy_ineq1}.
Finally, from mass conservation \eqref{eq:dis_mass_conv}, we also have \eqref{eq:dis_energy_ineq2} holds.
\end{proof}
\begin{prop}\label{thm:exp_t}
For any $n\geq0$, the following inequality hold
\begin{align}
\frac{1}{\Delta{t}}\int_\Omega \frac{(\rho^{n+1}_h - \invm)^2}{\invm} 
- \frac{1}{\Delta{t}}\int_\Omega \frac{(\rho^{n}_h - \invm)^2}{\invm}
\leq - \frac{2D}{c}\,\int_\Omega \frac{(\rho^{n+1}_h - \invm)^2}{\invm}. \label{eq:dis_energy_ineq3}
\end{align}
Consequently, we have the exponential decay of $\rho^{n+1}_h$
\begin{equation}\label{exp_tn}
\int_\Omega \frac{(\rho^{n}_h - \invm)^2}{\invm} \leq (1-\beta)^n \int_\Omega \frac{(\rho^{0}_h - \invm)^2}{\invm} \leq e^{-\beta n}\int_\Omega \frac{(\rho^{0}_h - \invm)^2}{\invm},
\end{equation}
where $\beta = \frac{2D \Delta t}{c+ 2D \Delta t}>0.$
\end{prop}
The proof of this proposition is identically same as \eqref{exp_tn_00} in Proposition \ref{prop_time}.

\subsection{The finite difference implementation}

For implementing the finite element method above, usually quadrature is used for computing the integrals. 
On the other hand,
it is well known that a finite element method with suitable quadrature is also a finite difference scheme. 
When $(k+1)$-point Gauss-Lobatto quadrature is used for \eqref{eq:Model1_varform}, the scheme is also referred to as $Q^k$ spectral element method \cite{maday1990optimal}, and it can be regarded as a $(k+2)$-th order accurate finite difference scheme with respect to the discrete $\ell^2$-norm at quadrature points for $k\geq 2$. See \cite{li2021accuracy,li2020superconvergence} for rigorous a priori error estimates.

Any $\mathbb{Q}^k$ polynomial on rectangular element can be represented as a Lagrangian interpolation polynomial at $(k+1)^d$ Gauss-Lobatto points. Thus, these points are not only quadrature nodes but also representing all degrees of freedom. 
In addition, for $k\leq2$, all Gauss-Lobatto points form a uniform grid.  
Let $\langle\cdot,\cdot\rangle$ denote the inner product $(\cdot,\cdot)$ evaluated by Gauss-Lobatto quadrature. Then, after replacing all integrals in \eqref{eq:Model1_varform} by Gauss-Lobatto quadrature, the scheme becomes:
\begin{align}\label{eq:Model1_quadform}
\langle\invm g_h^{n+1}, \phi_h\rangle 
+ \Delta{t}\,\langle D\invm\nabla{g_h^{n+1}}, \nabla{\phi_h}\rangle
+ \Delta{t}\,\langle\vec{u} g_h^{n+1}, \nabla{\phi_h}\rangle 
= \langle\invm g_h^{n}, \phi_h\rangle, 
\quad \forall \phi_h \in V^h.
\end{align} 

In particular, we can obtain a fourth order accurate finite difference scheme when using $Q^2$ element with $3$-point Gauss-Lobatto quadrature in \eqref{eq:Model1_varform}. For the $Q^1$ element method with $2$-point Gauss-Lobatto quadrature in \eqref{eq:Model1_varform}, we get a second order accurate finite difference scheme, which is exactly the same as the centered difference at interior grid points. These two finite difference schemes can be proved monotone for convection-diffusion operators \cite{li2020monotonicity, hu2021positivity, shen2021discrete}, thus positivity-preserving and energy decaying. 
In this paper, we  only consider these two finite difference schemes. 
\subsection{The second order scheme in one dimension}
For $\Omega=[-L,L]$, consider uniform grid points with spacing $h$, $-L = x_1 < x_2 < \cdots < x_N = L$. The mesh $\mathcal{E}_h$  consists of intervals $[x_i, x_{i+1}],~ i = 1, \cdots, N-1.$
 
 Following the discussion in \cite{hu2021positivity}, it is straightforward to verify that the scheme \eqref{eq:Model1_quadform} can be equivalently written in the following finite difference form:
 
\[ \invm_1 g^{n+1}_1
-\Delta{t}\, \frac{u_1 g_1^{n+1} + u_2 g_2^{n+1}}{h}
+\Delta{t}\,
\frac{D(\invm_{1} + \invm_{2})g_{1}^{n+1} 
- D(\invm_{1} + \invm_{2})g_{2}^{n+1}}{h^2}
= \invm_1 g^{n}_1;
\]
for $i=2, \cdots, N-1$, 
\[\label{scheme-1d2nd-interior}
\resizebox{.99\hsize}{!}{$
 \invm_i g_i^{n+1}
+\Delta{t}\, \frac{u_{i-1} g_{i-1}^{n+1} - u_{i+1} g_{i+1}^{n+1}}{2h}
+\Delta{t}\, 
\frac{-D(\invm_{i-1} + \invm_i)g_{i-1}^{n+1} 
+ D(\invm_{i-1} + 2\invm_i + \invm_{i+1})g_{i}^{n+1} 
- D(\invm_{i} + \invm_{i+1})g_{i+1}^{n+1}}{2h^2}
= \invm_i g_i^{n},$}
\]and
\[ \invm_N g^{n+1}_N
+\Delta{t} \frac{u_{N-1} g_{N-1}^{n+1} + u_N g_N^{n+1}}{h}
+\Delta{t}
\frac{-D(\invm_{N-1} + \invm_{N})g_{N-1}^{n+1} 
+ D(\invm_{N-1} + \invm_{N})g_{N}^{n+1}}{h^2}
= \invm_N g^{n}_N.
\]
For convenience, we introduce some ghost point values defined as 
$$g_0: =g_2,\quad  g_{N+1}:=g_{N-1}, \mathcal M_0:=\mathcal M_{2},\mathcal M_{N+1}:=\mathcal M_{N-1}$$
and 
$$u_{0}:=-u_2,\quad u_{N+1}:=-u_{N-1}.$$
We emphasize that ghost point values are only used to simplify the representation of the scheme, and they are not needed or used in the implementation. 
Recall that the velocity field in Model~1 satisfies boundary condition $\vec u\cdot \vec n=0$, thus $u_1=u_N=0$.
With the ghost point value notation and the boundary condition $u_1=u_N=0$, it is straightforward to see that the scheme above can 
be written as 
\begin{eqnarray}
\notag
 \resizebox{.99\hsize}{!}{$
 \invm_i g_i^{n+1}
+\Delta{t}\, \frac{u_{i-1} g_{i-1}^{n+1} - u_{i+1} g_{i+1}^{n+1}}{2h}
+\Delta{t}\, 
\frac{-D(\invm_{i-1} + \invm_i)g_{i-1}^{n+1} 
+ D(\invm_{i-1} + 2\invm_i + \invm_{i+1})g_{i}^{n+1} 
- D(\invm_{i} + \invm_{i+1})g_{i+1}^{n+1}}{2h^2}
= \invm_i g_i^{n},$}\\
 \label{scheme-1d2nd} \forall i=1, \cdots, N.\quad
\end{eqnarray}

 The finite difference scheme  \eqref{scheme-1d2nd} is obtained from finite element method with quadrature, and its second order accuracy is trivially implied by standard finite element error estimates. At domain interior points, the scheme \eqref{scheme-1d2nd-interior} is the same as the traditional second order centered difference scheme. However, the traditional centered difference scheme would give a different boundary scheme, for which the second order accuracy is quite difficult to justify due to the first order truncation error at boundaries. See Remark 3.3 in \cite{hu2021positivity} for more details. 

\subsection{The second order scheme in two dimensions}
For a square domain $\Omega=[-L,L]^2$, we consider a uniform grid with spacing $h$ consisting of $-L = x_1 < x_2 < \cdots < x_N = L$ and $-L = y_1 < y_2 < \cdots < y_N = L$. The mesh $\mathcal{E}_h$ consists of  $[x_i, x_{i+1}]\times[y_j, y_{j+1}],~ i,j = 1,\cdots,N-1$.
We employ the abbreviation $\invm_{i,j} = \invm(x_i,y_j)$ for a function. 
With similarly defined ghost point values and the boundary condition for velocity $\vec u\cdot \vec n=0$,
 the scheme finite difference form  of \eqref{eq:Model1_quadform}  with $Q^1$ element and $2\times 2$ 
Gauss-Lobatto quadrature is given as 
\begin{align}
\notag
& \invm_{i,j}g_{ij}^{n+1}
+\Delta{t}\, \frac{u_{i-1,j} g_{i-1,j}^{n+1} - u_{i+1,j} g_{i+1,j}^{n+1}}{2h}
+\Delta{t}\, \frac{v_{i,j-1} g_{i,j-1}^{n+1} - v_{i,j+1} g_{i,j+1}^{n+1}}{2h}\\
\notag
+&\Delta{t}\, 
\frac{-D(\invm_{i-1,j} + \invm_{i,j})g_{i-1,j}^{n+1} 
+ D(\invm_{i-1,j} + 2\invm_{i,j} + \invm_{i+1,j})g_{ij}^{n+1} 
- D(\invm_{i,j} + \invm_{i+1,j})g_{i+1,j}^{n+1}}{2h^2}\\
\notag
+&\Delta{t}\, 
\frac{-D(\invm_{i,j-1} + \invm_{i,j})g_{i,j-1}^{n+1} 
+ D(\invm_{i,j-1} + 2\invm_{i,j} + \invm_{i,j+1})g_{ij}^{n+1} 
- D(\invm_{i,j} + \invm_{i,j+1})g_{i,j+1}^{n+1}}{2h^2}\\
=&\, \invm_{i,j} g_{ij}^{n},\qquad \forall i,j=1,\cdots, N. \label{2d-2nd-scheme}
\end{align}

\subsection{The fourth order scheme in one dimension} 
 Assume the domain $\Omega=[-L, L]$ is partitioned into $k$ uniform intervals with cell length $2h$. 
Then all $3$-point Gauss-Lobatto points for each small interval form 
an uniform grid  $-L=x_1<x_2<\cdots<x_N=L$ with grid spacing $h$ and $N=2k+1.$ Thus the number of grid points for this fourth order scheme must be odd. The mesh $\mathcal{E}_h$  consists of intervals $[x_{i}, x_{i+2}],~ i = 1, 3, \cdots, N-2$.

 Following the discussion in \cite{li2020superconvergence, hu2021positivity, shen2021discrete}, it is straightforward to verify that the scheme \eqref{eq:Model1_quadform} with $P^2$ element and 3-point Gauss-Lobatto quadrature can be equivalently written in the following finite difference form:
 \[ \resizebox{.99\hsize}{!}{$  \invm_1 g_1^{n+1}
+ \Delta{t} \frac{-3u_1g_1^{n+1} - 4u_2 g_{2}^{n+1} + u_3 g_{3}^{n+1}}{2h}
+ \Delta{t}\,\frac{D(9\invm_1 + 4\invm_{2} + \invm_{3})g_1^{n+1} - D(12\invm_{1} + 4\invm_3)g_{2}^{n+1}}{4h^2}
+ \Delta{t}\,\frac{D(3\invm_{1} - 4\invm_{2} + 3\invm_3)g_{3}^{n+1}}{4h^2}
= \invm_1 g_1^{n}; $} \]
\begin{eqnarray*}
  \resizebox{.99\hsize}{!}{$ \invm_i g_i^{n+1}
+ \Delta{t} \frac{-u_{i-2} g_{i-2}^{n+1} + 4u_{i-1} g_{i-1}^{n+1} - 4u_{i+1} g_{i+1}^{n+1} + u_{i+2} g_{i+2}^{n+1}}{4h}
+ \Delta{t}\,\frac{D(3\invm_{i-2} - 4\invm_{i-1} + 3\invm_i)g_{i-2}^{n+1} - D(4\invm_{i-2} + 12\invm_i)g_{i-1}^{n+1}}{8h^2}
$}\\
  \resizebox{.99\hsize}{!}{$ +\Delta{t}\,\frac{D(\invm_{i-2} + 4\invm_{i-1} + 18\invm_i + 4\invm_{i+1} + \invm_{i+2})g_i^{n+1}}{8h^2}
+ \Delta{t}\,\frac{-D(12\invm_i + 4\invm_{i+2})g_{i+1}^{n+1} + D(3\invm_i - 4\invm_{i+1} + 3\invm_{i+2})g_{i+2}^{n+1}}{8h^2}
= \invm_i g_i^{n}, $} \\
\forall i=3,5,\cdots, N-2 \quad (\mbox{corresponding to interior knots});
\end{eqnarray*}
\begin{eqnarray*}
  \resizebox{.99\hsize}{!}{$ \invm_i g_i^{n+1}
+\Delta{t}\, \frac{u_{i-1} g_{i-1}^{n+1} - u_{i+1} g_{i+1}^{n+1}}{2h}
+ \Delta{t}\, \frac{-D(3\invm_{i-1}+\invm_{i+1})g_{i-1}^{n+1} + 4D(\invm_{i-1}+\invm_{i+1})g_i^{n+1} - D(\invm_{i-1}+3\invm_{i+1})g_{i+1}^{n+1}}{4h^2}
= \invm_i g_i^n, $}\\
\forall  i=2, 4, \cdots N-1\quad  (\mbox{corresponding to interval midpoints});
\end{eqnarray*}
 \[ \resizebox{.99\hsize}{!}{$ \invm_N g_N^{n+1}
+ \Delta{t} \frac{- u_{N-2} g_{N-2}^{n+1} + 4u_{N-1} g_{N-1}^{n+1} + 3u_N g_N^{n+1}}{2h}
+ \Delta{t}\,\frac{D(3\invm_{N-2} - 4\invm_{N-1} + 3\invm_N)g_{N-2}^{n+1} - D(4\invm_{N-2} + 12\invm_N)g_{N-1}^{n+1}}{4h^2}
+ \Delta{t}\,\frac{D(\invm_{N-2} + 4\invm_{N-1} + 9\invm_N)g_N^{n+1}}{4h^2}
= \invm_N g_N^{n}. $} \]
 
 For convenience, we introduce   ghost point values defined as 
$$g_{-1}:=g_3, g_0: =g_2,  g_{N+1}:=g_{N-1},g_{N+2}:=g_{N-2},$$ 
$$\mathcal M_{-1}:=\mathcal M_{3},\mathcal M_0:=\mathcal M_{2},\mathcal M_{N+1}:=\mathcal M_{N-1},\mathcal M_{N+2}:=\mathcal M_{N-2},$$
and 
$$u_{-1}:=-u_3, u_{0}:=-u_2,\quad u_{N+1}:=-u_{N-1}, u_{N+2}:=-u_{N-2}.$$
 
With the ghost point value notation and the velocity boundary condition $u_1=u_N=0$, the fourth order finite difference scheme above
can be  written as
\begin{subequations}
  \label{1d-4th-scheme}
  \begin{eqnarray}
 \notag \resizebox{.99\hsize}{!}{$ \invm_i g_i^{n+1}
+ \Delta{t} \frac{-u_{i-2} g_{i-2}^{n+1} + 4u_{i-1} g_{i-1}^{n+1} - 4u_{i+1} g_{i+1}^{n+1} + u_{i+2} g_{i+2}^{n+1}}{4h}
+ \Delta{t}\,\frac{D(3\invm_{i-2} - 4\invm_{i-1} + 3\invm_i)g_{i-2}^{n+1} - D(4\invm_{i-2} + 12\invm_i)g_{i-1}^{n+1}}{8h^2}
$}\\
 \notag \resizebox{.99\hsize}{!}{$ +\Delta{t}\,\frac{D(\invm_{i-2} + 4\invm_{i-1} + 18\invm_i + 4\invm_{i+1} + \invm_{i+2})g_i^{n+1}}{8h^2}
+ \Delta{t}\,\frac{-D(12\invm_i + 4\invm_{i+2})g_{i+1}^{n+1} + D(3\invm_i - 4\invm_{i+1} + 3\invm_{i+2})g_{i+2}^{n+1}}{8h^2}
= \invm_i g_i^{n}, $} \\
\forall i=1,5,\cdots, N \quad (\mbox{odd $i$, corresponding to knots});\quad \label{1d-4th-scheme-1}\\
 \notag \resizebox{.99\hsize}{!}{$ \invm_i g_i^{n+1}
+\Delta{t}\, \frac{u_{i-1} g_{i-1}^{n+1} - u_{i+1} g_{i+1}^{n+1}}{2h}
+ \Delta{t}\, \frac{-D(3\invm_{i-1}+\invm_{i+1})g_{i-1}^{n+1} + 4D(\invm_{i-1}+\invm_{i+1})g_i^{n+1} - D(\invm_{i-1}+3\invm_{i+1})g_{i+1}^{n+1}}{4h^2}
= \invm_i g_i^n, $}\\
\forall  i=2, 4, \cdots N-1\quad  (\mbox{even $i$, corresponding to interval midpoints}).\quad \label{1d-4th-scheme-2}
\end{eqnarray}
\end{subequations}

\subsection{The fourth order scheme in two dimensions}

Assume the domain is $\Omega=[-L, L]\times[-L,L]$ with 
an uniform $N\times N$ grid point with spacing $h$, obtained from all $3\times 3$ Gauss-Lobatto points on a uniform rectangular mesh with $k\times k$ cells. Thus $N=2k+1$.    
For the $Q^2$ finite element method on uniform rectangular meshes, there are three types of grid point values as shown in Figure \ref{fig-points}:
\begin{align*}
\text{knot:}& \quad \text{both}~i~\text{and}~j~\text{are odd},\\
\text{edge~center:}& \quad i~\text{is even and}~j~\text{is odd}~\text{(parallel~to~x-axis)}\quad\text{or}\\
~ & \quad i~\text{is odd and}~j~\text{is even}~\text{(parallel~to~y-axis)},\\
\text{cell~center:}& \quad \text{both}~i~\text{and}~j~\text{are even}.
\end{align*}
\begin{figure}[h!]
\begin{center}
 \scalebox{0.6}{
\begin{tikzpicture}[samples=100, domain=-3:3, place/.style={circle,draw=blue!50,fill=blue,thick,
inner sep=0pt,minimum size=1.5mm},transition/.style={circle,draw=red,fill=red,thick,inner sep=0pt,minimum size=2mm}
,point/.style={circle,draw=black,fill=black,thick,inner sep=0pt,minimum size=2mm}]

\draw[color=red] (-2,-2)--(-2,2);
\draw[color=red] (-2,-2)--(2,-2);
\draw[color=red] (2,-2)--(2, 2);
\draw[color=red] (-2,2)--(2, 2);
\node at ( -2,-2) [place] {};
\node at ( 0,-2) [point] {};
\node at ( 2,-2) [place] {};

\node at ( -2,0) [point] {};
\node at ( 0,0) [transition] {};
\node at ( 2,0) [point] {};

\node at ( -2,2) [place] {};
\node at ( 0,2) [point] {};
\node at ( 2,2) [place] {};
 \end{tikzpicture}
}
\caption{Three types of  grid points: red cell center, blue knots and black edge centers for a $Q^2$ finite element cell. }
\end{center}
\label{fig-points}
\end{figure}
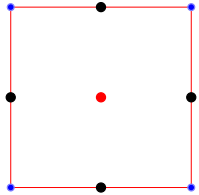

The fourth order finite difference scheme is given as the following:
\begin{subequations}\label{2d-4th-scheme}
\begin{align}
\resizebox{.99\hsize}{!}{$ \invm_{ij}g^{n+1}_{ij}
+\, \Delta{t}\,\frac{-u_{i-2,j}g^{n+1}_{i-2,j} + 4u_{i-1,j}g^{n+1}_{i-1,j} - 4u_{i+1,j}g^{n+1}_{i+1,j} + u_{i+2,j}g^{n+1}_{i+2,j}}{4h}+\, \Delta{t}\,\frac{-v_{i,j-2}g^{n+1}_{i,j-2} + 4v_{i,j-1}g^{n+1}_{i,j-1} - 4v_{i,j+1}g^{n+1}_{i,j+1} + v_{i,j+2}g^{n+1}_{i,j+2}}{4h} $} \nonumber\\  
\resizebox{.99\hsize}{!}{$ +\, \Delta{t}\,\frac{D(3\invm_{i-2,j} - 4\invm_{i-1,j} + 3\invm_{i,j})g^{n+1}_{i-2,j} - D(4\invm_{i-2,j} + 12\invm_{i,j})g^{n+1}_{i-1,j}}{8h^2}
+\, \Delta{t}\,\frac{D(\invm_{i-2,j} + 4\invm_{i-1,j} + 18\invm_{i,j} + 4\invm_{i+1,j} + \invm_{i+2,j})g^{n+1}_{ij}}{8h^2}$} \nonumber\\
\resizebox{.99\hsize}{!}{$ +\, \Delta{t}\,\frac{- D(12\invm_{i,j} + 4\invm_{i+2,j})g^{n+1}_{i+1,j} + D(3\invm_{i,j} - 4\invm_{i+1,j} + 3\invm_{i+2,j})g^{n+1}_{i+2,j}}{8h^2}
+\, \Delta{t}\,\frac{D(3\invm_{i,j-2} - 4\invm_{i,j-1} + 3\invm_{i,j})g^{n+1}_{i,j-2} - D(4\invm_{i,j-2} + 12\invm_{i,j})g^{n+1}_{i,j-1}}{8h^2}$} \nonumber\\
\resizebox{.99\hsize}{!}{$ +\, \Delta{t}\,\frac{D(\invm_{i,j-2} + 4\invm_{i,j-1} + 18\invm_{i,j} + 4\invm_{i,j+1} + \invm_{i,j+2})g^{n+1}_{ij}}{8h^2}
+\, \Delta{t}\,\frac{- D(12\invm_{i,j} + 4\invm_{i,j+2})g^{n+1}_{i,j+1} + D(3\invm_{i,j} - 4\invm_{i,j+1} + 3\invm_{i,j+2})g^{n+1}_{i,j+2}}{8h^2}
=\, \invm_{ij}g^{n}_{ij},$} \nonumber\\
\mbox{if $(x_i,y_j)$ is a knot;
}.
\end{align}
\begin{align}
\resizebox{.99\hsize}{!}{$ 
 \invm_{ij}g^{n+1}_{ij}
+\, \Delta{t}\,\frac{v_{i,j-1}g^{n+1}_{i,j-1}-v_{i,j+1}g^{n+1}_{i,j+1}}{2h}
+\, \Delta{t}\,\frac{-u_{i-2,j}g^{n+1}_{i-2,j} + 4u_{i-1,j}g^{n+1}_{i-1,j} - 4u_{i+1,j}g^{n+1}_{i+1,j} + u_{i+2,j}g^{n+1}_{i+2,j}}{4h}$} \nonumber\\
\resizebox{.99\hsize}{!}{$ 
+\, \Delta{t}\,\frac{D(3\invm_{i-2,j} - 4\invm_{i-1,j} + 3\invm_{i,j})g^{n+1}_{i-2,j} - D(4\invm_{i-2,j} + 12\invm_{i,j})g^{n+1}_{i-1,j}}{8h^2}
+\, \Delta{t}\, \frac{D(\invm_{i-2,j} + 4\invm_{i-1,j} + 18\invm_{ij} + 4\invm_{i+1,j} + \invm_{i+2,j})g^{n+1}_{ij}}{8h^2}$} \nonumber\\
\resizebox{.99\hsize}{!}{$  +\, \Delta{t}\,\frac{-D(12\invm_{i,j} + 4\invm_{i+2,j})g^{n+1}_{i+1,j} + D(3\invm_{i+2,j} - 4\invm_{i+1,j} + 3\invm_{i,j})g^{n+1}_{i+2,j}}{8h^2}
+\, \Delta{t}\, \frac{-D(3\invm_{i,j-1} + \invm_{i,j+1})g^{n+1}_{i,j-1}}{4h^2}$}\nonumber\\
+\, \Delta{t}\, \frac{D(\invm_{i,j-1} + \invm_{i,j+1})g^{n+1}_{ij}}{h^2}
+\, \Delta{t}\, \frac{-D(\invm_{i,j-1} + 3\invm_{i,j+1})g^{n+1}_{i,j+1}}{4h^2}=\, \invm_{ij}g^{n}_{ij},\nonumber\\
\mbox{if $(x_i,y_j)$ is an edge (parallel to $y$-axis) center;}
\end{align}
\begin{align}
 \invm_{ij}g^{n+1}_{ij}
+\, \Delta{t}\,\frac{u_{i-1,j}g^{n+1}_{i-1,j} - u_{i+1,j}g^{n+1}_{i+1,j}}{2h} 
+ \Delta{t}\,\frac{v_{i,j-1}g^{n+1}_{i,j-1} - v_{i,j+1}g^{n+1}_{i,j+1}}{2h} \nonumber\\
\resizebox{.99\hsize}{!}{$  +\, \Delta{t}\, \frac{-D(3\invm_{i-1,j} + \invm_{i+1,j})g^{n+1}_{i-1,j} - D(\invm_{i-1,j} + 3\invm_{i+1,j})g^{n+1}_{i+1,j}}{4h^2}
+\, \Delta{t}\, \frac{D(\invm_{i-1,j}+\invm_{i+1,j}+\invm_{i,j-1}+\invm_{i,j+1})g^{n+1}_{ij}}{h^2}$} \nonumber\\
\resizebox{.99\hsize}{!}{$  +\, \Delta{t}\, \frac{-D(3\invm_{i,j-1} + \invm_{i,j+1})g^{n+1}_{i,j-1} - D(\invm_{i,j-1} + 3\invm_{i,j+1})g^{n+1}_{i,j+1}}{4h^2} 
=\, \invm_{ij}g^{n}_{ij}, \quad
\mbox{if $(x_i,y_j)$ is a cell center.} $}
\end{align}
\end{subequations}
For the grid point $(x_i,y_j)$ which is an edge center for an edge parallel to $x$-axis, 
the scheme  is very similar as above, thus omitted here. 

\section{Monotonicity of the fully discrete schemes}\label{sec:monotonicity} 

In this section, we will prove the monotonicity of the two fully discrete schemes. 

\subsection{The M-matrix structure in the second order scheme}
A matrix $\vecc{A}$ is called monotone if all entries of its inverse are nonnegative, namely, $\vecc{A}^{-1}\geq0$. In this paper, all inequalities for matrices are entry-wise inequalities. 
A square matrix $\vecc{A}$ is called an M-matrix if it can be expressed in the form $\vecc{A} = s\vecc{I} - \vecc{B}$, where $\vecc{B}\geq 0$ and $s$ is greater than the spectral radius of $\vecc{B}$. There are many equivalent definitions or characterizations of M-matrix. 
A comprehensive review of M-matrix can be found in \cite{plemmons1977m}. The nonsingular M-matrix is an inverse-positive matrix and it serves as a convenient tool for proving monotonicity. A sufficient and necessary condition to characteristic nonsingular M-matrix is stated as follows:
\begin{lem}\label{thm:M_matrix1}
For a real square matrix $\vecc{A}$ with positive diagonal entries and nonpositive off-diagonal entries, it is a nonsingular M-matrix if and only if there exists a positive diagonal matrix $\vecc{D}$ such that $\vecc{A}\vecc{D}$ has all positive row sums.
\end{lem}
We also state a sufficient but not necessary condition to verify nonsingular M-matrix; c.f. \cite{plemmons1977m}.
\begin{lem}\label{thm:M_matrix2}
For a real square matrix $\vecc{A}$ with positive diagonal entries and nonpositive off-diagonal entries, it is a nonsingular M-matrix if all the row sums of $\vecc{A}$ are nonnegative and at least one row sum is positive.
\end{lem}

\subsubsection{The second order scheme in one dimension} 
We verify that the matrix in the scheme above satisfies Lemma~\ref{thm:M_matrix2}. 
The following mesh constraint is sufficient for off-diagonal entries in the system matrix to be non-positive: 
 \begin{equation}
h |u_j| \leq D\min \{\invm_{j-1}, \invm_{j}, \invm_{j+1}\}, \quad \forall j. \label{eq:1D2order_suf1}
\end{equation}
To guarantee the nonnegative row sums of the system matrix with at least one strictly positive row sum, the following constraints on time step size are sufficient:
\begin{equation}
\invm_i + \Delta{t}\,\frac{u_{i-1}- u_{i+1}}{2h} > 0 \quad \forall i.  \label{eq:1D2order_suf2} 
\end{equation}
Recall that the velocity field $\vec u$ is incompressible in Model~1, thus $u(x)\equiv C$ in one dimension. So \eqref{eq:1D2order_suf2} is trivially satisfied for positive measure $\mathcal M$. So we have the following result. 
\begin{thm}
 Under the mesh and time step constraints \eqref{eq:1D2order_suf1} and \eqref{eq:1D2order_suf2}, the coefficient matrix for the unknown vector $g^{n+1}$ in 
 the second order finite difference scheme \eqref{scheme-1d2nd} forms an M-matrix and thus is monotone.   
 In particular, in one dimension,   discrete divergence free velocity field $u$ is constant and  the second order finite difference scheme \eqref{scheme-1d2nd} is monotone
 under the mesh constraint \eqref{eq:1D2order_suf1}. 
\end{thm}

\subsubsection{The second order scheme in two dimensions}

Next we verify that the matrix in the scheme \eqref{2d-2nd-scheme} satisfies Lemma~\ref{thm:M_matrix2}. 
The following mesh constraint is sufficient for off-diagonal entries in the system matrix to be non-positive:  
\begin{align}
h |u_{i,j}| &\leq D\min \{\invm_{i-1,j}, \invm_{i+1, j}, \invm_{i,j}, \invm_{i,j-1}, \invm_{i,j+1}\}, \quad \forall i, j. \label{eq:2D2order_suf1}
\end{align}
To guarantee the nonnegative row sums of the system matrix with at least one strictly positive row sum, the following constraints on time step size are sufficient:
\begin{align}
 \invm_{i,j}
+\Delta{t}\, \frac{u_{i-1,j} - u_{i+1,j} }{2h}
+\Delta{t}\, \frac{v_{i,j-1}  - v_{i,j+1} }{2h}>  0.  \label{eq:2D2order_suf2}
\end{align}
Notice that \eqref{eq:2D2order_suf2} is trivially satisfied for positive measure $\mathcal M$, if the following discrete divergence free constraint
is satisfied
\begin{equation}
  \frac{u_{i-1,j} - u_{i+1,j} }{2h}
+  \frac{v_{i,j-1}  - v_{i,j+1} }{2h}=0.
\label{2d-disccrete_div_free}
\end{equation}
Recall that the velocity field $\vec u$ is incompressible in Model~1, thus one can preprocess the given $\vec u$ such that the velocity point values satisfies
\eqref{2d-disccrete_div_free}. So we have the following result. 
\begin{thm}
 Under the mesh and time step constraints \eqref{eq:2D2order_suf1} and \eqref{eq:2D2order_suf2}, the coefficient matrix for the unknown vector $g^{n+1}$ in 
 the second order finite difference scheme \eqref{2d-2nd-scheme} forms an M-matrix thus is monotone.   
 In particular, with a discrete divergence free velocity field satisfying \eqref{2d-disccrete_div_free}, the matrix in second order finite difference scheme \eqref{2d-2nd-scheme} is monotone under the mesh constraint \eqref{eq:2D2order_suf1}. 
\end{thm}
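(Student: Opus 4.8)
The plan is to verify the three hypotheses of Theorem~\ref{thm:M_matrix2} for the coefficient matrix $\vecc{A}$ of the unknown $g^{n+1}$ defined by \eqref{2d-2nd-scheme}: strictly positive diagonal entries, nonpositive off-diagonal entries, and nonnegative row sums with at least one strictly positive. First I would read off the diagonal entry at the grid point $(x_i,y_j)$ directly from \eqref{2d-2nd-scheme}, namely $\invm_{i,j} + \frac{\Delta{t} D}{2h^2}\bbs{\invm_{i-1,j}+2\invm_{i,j}+\invm_{i+1,j}} + \frac{\Delta{t} D}{2h^2}\bbs{\invm_{i,j-1}+2\invm_{i,j}+\invm_{i,j+1}}$, which is strictly positive because $\invm>0$, $D>0$, and $\Delta{t},h>0$.

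Next I would treat the four off-diagonal entries. Each has the form ``half-centered convection coefficient minus diffusion coefficient''; for instance the coefficient of $g_{i-1,j}^{n+1}$ is $\Delta{t}\bbs{\frac{u_{i-1,j}}{2h} - \frac{D(\invm_{i-1,j}+\invm_{i,j})}{2h^2}}$. To make it nonpositive it suffices that $h\,u_{i-1,j}\le D(\invm_{i-1,j}+\invm_{i,j})$, and applying the mesh constraint \eqref{eq:2D2order_suf1} at the index $(i-1,j)$ gives $h|u_{i-1,j}|\le D\invm_{i-1,j}\le D(\invm_{i-1,j}+\invm_{i,j})$, which closes this case. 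The other three neighbours are handled identically, each time invoking \eqref{eq:2D2order_suf1} at the neighbouring index.

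For the row sums I would exploit the conservative structure of the discretization: in each coordinate direction the three diffusion coefficients telescope to zero, since $-(\invm_{i-1,j}+\invm_{i,j}) + (\invm_{i-1,j}+2\invm_{i,j}+\invm_{i+1,j}) - (\invm_{i,j}+\invm_{i+1,j}) = 0$ and likewise in $j$. Hence the row sum collapses to exactly the left-hand side of \eqref{eq:2D2order_suf2}, $\invm_{i,j}+\Delta{t}\frac{u_{i-1,j}-u_{i+1,j}}{2h}+\Delta{t}\frac{v_{i,j-1}-v_{i,j+1}}{2h}$, which \eqref{eq:2D2order_suf2} renders strictly positive at every row, so the ``at least one positive'' requirement is automatic. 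The \emph{in particular} statement then follows at once: under the discrete divergence-free condition \eqref{2d-disccrete_div_free} the convective contribution to each row sum vanishes and every row sum equals $\invm_{i,j}>0$, so \eqref{eq:2D2order_suf2} is satisfied for free and only the mesh constraint \eqref{eq:2D2order_suf1} is required.

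The main obstacle is the bookkeeping at boundary and corner grid points, where the ghost-value reflections $g_0:=g_2$, $\invm_0:=\invm_2$, $u_0:=-u_2$ and their analogues in each direction fold the ghost columns back onto genuine unknowns. Here I would make two observations. First, folding preserves row sums: replacing a column coefficient by the sum of itself and the reflected ghost coefficient leaves the total row sum unchanged, so the telescoping identity above—now with $u_0=-u_2$, $v_0=-v_2$ substituted—still reduces each boundary row sum to the left-hand side of \eqref{eq:2D2order_suf2} evaluated with ghost values. Second, the folded off-diagonal entries retain the ``convection minus diffusion'' form with reflected arguments, so the same application of \eqref{eq:2D2order_suf1} at the neighbouring index again forces nonpositivity. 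Once these boundary rows are seen to inherit the interior sign pattern, Theorem~\ref{thm:M_matrix2} applies and gives $\vecc{A}^{-1}\ge 0$, i.e.\ monotonicity.
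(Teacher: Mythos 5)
Your proposal is correct and follows essentially the same route as the paper: verifying the hypotheses of Theorem~\ref{thm:M_matrix2} (positive diagonal, nonpositive off-diagonals under \eqref{eq:2D2order_suf1}, and row sums telescoping to the left-hand side of \eqref{eq:2D2order_suf2}), with the \emph{in particular} clause following from the discrete divergence-free identity \eqref{2d-disccrete_div_free}. Your explicit treatment of the boundary and corner rows via the ghost-value folding is more detailed than the paper's discussion, but it is exactly the bookkeeping the paper's ghost-point formulation of \eqref{2d-2nd-scheme} is designed to make transparent, so the two arguments coincide.
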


 \subsection{Lorenz's sufficient condition for monotonicity}
 In general, M-matrix structure is only a very conveneint condition for verifying monotonicity, rather than a necessary condition. 
 Moreover, almost all high order schemes simply do not have any M-matrix structure due to positive off-diagonal entries. 
In \cite{lorenz1977inversmonotonie}, Lorenz proposed a convenient sufficient condition for a matrix to be a product of M-matrices. We review Lorenz's sufficient condition in this subsection. See also \cite{li2020monotonicity} for a review. 

Let matrix $\vecc{A}_d$ be a diagonal matrix denoting the diagonal part of $\vecc{A} = [a_{ij}]\in\bR^{n\times n}$ and $\vecc{A}_a = \vecc{A}-\vecc{A}_d$. We further decompose $\vecc{A}_a$ into positive and negative off-diagonal parts. More precisely, we define:
\begin{align*}
\vecc{A}_d =
\begin{cases}
a_{ii}, & \text{if}~i=j,\\
0,      & \text{if}~i\neq j,
\end{cases} &&
\vecc{A}_a^+ =
\begin{cases}
a_{ij}, & \text{if}~a_{ij}>0,~i\neq j,\\
0,      & \text{otherwise},
\end{cases} &&
\vecc{A}_a^- = \vecc{A}_a - \vecc{A}_a^+.
\end{align*}
 
\begin{defn}
Let $\mathcal{N} = \{1,2,\dots,n\}$. For $\mathcal{N}_1,~\mathcal{N}_2\subset\mathcal{N}$, we say a matrix $\vecc{A}$ of size $n\times n$ connects $\mathcal{N}_1$ and $\mathcal{N}_2$, if
\begin{align}\label{eq:connect}
\forall i_0 \in \mathcal{N}_1,~
\exists i_r \in \mathcal{N}_2,~
\exists i_1, \dots, i_{r-1} \in \mathcal{N} \quad
\text{s.t.}\quad
a_{i_{k-1}i_{k}} \neq 0, \quad
k = 1, \dots, r.
\end{align}
If perceiving $\vecc{A}$ as a directed graph adjacency matrix of vertices labeled by $\mathcal{N}$, then \eqref{eq:connect} simply means that there exists a directed path from any vertex in $\mathcal{N}_1$ to at least one vertex in $\mathcal{N}_2$. In particular, if $\mathcal{N}_1=\emptyset$, then any matrix $\vecc{A}$ connects $\mathcal{N}_1$ and $\mathcal{N}_2$.
\end{defn}
\begin{defn}
Given a square matrix $\vecc{A}$ and a column vector $\vec{x}$, define
\begin{align*}
\mathcal{N}^0(\vecc{A}\vec{x}) = \{i:~(\vecc{A}\vec{x})_i = 0\}
\quad\text{and}\quad
\mathcal{N}^+(\vecc{A}\vec{x}) = \{i:~(\vecc{A}\vec{x})_i > 0\}.
\end{align*}
\end{defn}
The following theorem was proved in \cite{lorenz1977inversmonotonie}, see also \cite{li2020monotonicity} for a detailed proof. 
\begin{thm}[Lorenz's condition]\label{thm:Lorenz_cond}
If $\vecc{A}_a^-$ has a decomposition $\vecc{A}_a^- = \vecc{A}^z + \vecc{A}^s = (a^z_{ij}) + (a^s_{ij})$ with $\vecc{A}^z \leq 0$ and $\vecc{A}^s\leq 0$, such that
\begin{itemize}
\item[1.] $\vecc{A}_d + \vecc{A}^z$ is a nonsingular M-matrix,
\item[2.] $\vecc{A}_a^+ \leq \vecc{A}^z\vecc{A}_d^{-1}\vecc{A}^s$ or equivalently $\forall a_{ij} > 0$ with $i\neq j$, $a_{ij} \leq \displaystyle\sum_{k=1}^n a^z_{ik} a_{kk}^{-1} a^s_{kj}$,
\item[3.] $\exists\vec{e}\in\bR^n\setminus\{\vec{0}\}$, $\vec{e}\geq 0$ with $\vecc{A}\vec{e}\geq 0$ such that $\vecc{A}^z$ or $\vecc{A}^s$ connects $\mathcal{N}^0(\vecc{A}\vec{e})$ with $\mathcal{N}^+(\vecc{A}\vec{e})$.
\end{itemize}
Then $\vecc{A}$ is a product of two nonsingular M-matrices, thus $\vecc{A}^{-1}\geq0$.
\end{thm}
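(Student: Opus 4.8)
The plan is to prove inverse-positivity by building $\vecc{A}$ out of the splitting $\vecc{A}_a^- = \vecc{A}^z + \vecc{A}^s$ through two auxiliary $Z$-matrices, $\vecc{M}_1 := \vecc{A}_d + \vecc{A}^z$ and $\vecc{M}_2 := \vecc{A}_d + \vecc{A}^s$, each having positive diagonal and nonpositive off-diagonal entries. The first thing I would record is the algebraic identity obtained by expanding the product,
\[
\vecc{M}_1\vecc{A}_d^{-1}\vecc{M}_2 = \vecc{A}_d + \vecc{A}^z + \vecc{A}^s + \vecc{A}^z\vecc{A}_d^{-1}\vecc{A}^s = \vecc{A} + \left(\vecc{A}^z\vecc{A}_d^{-1}\vecc{A}^s - \vecc{A}_a^+\right),
\]
which uses only $\vecc{A} = \vecc{A}_d + \vecc{A}^z + \vecc{A}^s + \vecc{A}_a^+$. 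Since $\vecc{A}^z,\vecc{A}^s\leq 0$ and $\vecc{A}_d^{-1}>0$, the discrepancy $\vecc{R} := \vecc{A}^z\vecc{A}_d^{-1}\vecc{A}^s - \vecc{A}_a^+$ is entrywise nonnegative \emph{exactly} by hypothesis~2. This is the single place where the positive off-diagonal entries $\vecc{A}_a^+$, which otherwise obstruct any direct M-matrix reasoning, are absorbed, and it reduces everything to controlling the nonnegative correction $\vecc{R}$.

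Next I would certify that the two factors are nonsingular M-matrices. For $\vecc{M}_1$ this is hypothesis~1 verbatim, so $\vecc{M}_1^{-1}\geq 0$. The substantive work is $\vecc{M}_2$: it is a $Z$-matrix, so by Theorem~\ref{thm:M_matrix1} I would try to exhibit a positive scaling whose scaled row sums are positive. Here the vector $\vec{e}$ of hypothesis~3 enters: from $\vecc{A}\vec{e}\geq 0$, $\vecc{R}\geq 0$, and the identity above one gets $\vecc{M}_1\vecc{A}_d^{-1}\vecc{M}_2\,\vec{e}\geq 0$, and applying $\vecc{M}_1^{-1}\geq 0$ followed by the positive diagonal $\vecc{A}_d$ propagates this to $\vecc{M}_2\vec{e}\geq 0$. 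This already delivers nonnegative scaled row sums for $\vecc{M}_2$; once $\vecc{M}_2$ is confirmed a nonsingular M-matrix, the product $\vecc{C}:=\vecc{M}_1\vecc{A}_d^{-1}\vecc{M}_2$ is inverse-positive, and writing $\vecc{A} = \vecc{C} - \vecc{R} = \vecc{C}\left(\vecc{I} - \vecc{C}^{-1}\vecc{R}\right)$ with $\vecc{C}^{-1}\vecc{R}\geq 0$ lets me close via the Neumann series $\vecc{A}^{-1} = \left(\vecc{I} - \vecc{C}^{-1}\vecc{R}\right)^{-1}\vecc{C}^{-1}\geq 0$, realizing $\vecc{A}$ as inverse-positive as asserted.

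The hard part will be upgrading the weak inequalities to the strict positivity that nonsingularity demands, namely ruling out the degenerate case $\rho(\vecc{C}^{-1}\vecc{R}) = 1$ (equivalently, $\vecc{M}_2\vec{e}$ vanishing on a nonempty index set). This is precisely the role of the connectivity clause of hypothesis~3, and I expect it to be the crux of the whole argument, since it is the only step where the non-$Z$-matrix nature of $\vecc{A}$ must be confronted. The idea is a discrete maximum principle / Perron–Frobenius argument: the indices in $\mathcal{N}^+(\vecc{A}\vec{e})$ seed strict positivity, and the hypothesis that $\vecc{A}^z$ or $\vecc{A}^s$ connects $\mathcal{N}^0(\vecc{A}\vec{e})$ to $\mathcal{N}^+(\vecc{A}\vec{e})$ means one can transport that strict positivity along a directed path of nonzero entries to every index in $\mathcal{N}^0(\vecc{A}\vec{e})$. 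Using $\vec{e}$ as a subinvariant test vector, $\left(\vecc{C}^{-1}\vecc{R}\right)\vec{e}\leq \vec{e}$ forces $\rho(\vecc{C}^{-1}\vecc{R})\leq 1$, and the connectivity is exactly what excludes equality, yielding $\rho<1$ and the nonsingularity of $\vecc{M}_2$ simultaneously. With that in hand the remaining computations are routine sign bookkeeping.
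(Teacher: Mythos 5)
A preliminary remark: the paper does not prove this theorem at all --- it is quoted from Lorenz's 1977 paper, with the detailed proof in the cited reference \cite{li2020monotonicity} --- so your proposal can only be compared against that standard proof.

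Your preparatory steps are correct: the identity $\vecc{M}_1\vecc{A}_d^{-1}\vecc{M}_2 = \vecc{A} + \vecc{R}$ with $\vecc{R} = \vecc{A}^z\vecc{A}_d^{-1}\vecc{A}^s - \vecc{A}_a^+ \geq 0$ (this is exactly hypothesis~2), the deduction $\vecc{M}_2\vec{e}\geq 0$ from $\vecc{A}\vec{e}\geq 0$, and the Neumann-series closing step are all sound. The genuine gap is that the entire mathematical content of the theorem sits inside the step you defer: ``once $\vecc{M}_2$ is confirmed a nonsingular M-matrix.'' Neither of the paper's two M-matrix criteria can confirm it. Theorem~\ref{thm:M_matrix1} requires a \emph{strictly} positive diagonal scaling with \emph{strictly} positive row sums, while here both $\vec{e}$ and $\vecc{M}_2\vec{e}$ may genuinely have zero entries (take $\vecc{A} = \vecc{M}_2 = \begin{pmatrix}1 & -1\\ 0 & 1\end{pmatrix}$, $\vecc{A}^z=\vecc{A}_a^+=0$, $\vec{e}=(1,1)^T$: then $\vecc{M}_2\vec{e}=(0,1)^T$); and the nonnegative-row-sum condition of Theorem~\ref{thm:M_matrix2} is not sufficient without a chain condition. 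What you actually need is Lorenz's lemma: a Z-matrix $\vecc{B}$ admitting $\vec{e}\geq 0$, $\vec{e}\neq\vec{0}$, with $\vecc{B}\vec{e}\geq 0$ and with $\vecc{B}$ connecting $\mathcal{N}^0(\vecc{B}\vec{e})$ to $\mathcal{N}^+(\vecc{B}\vec{e})$ is a nonsingular M-matrix. That lemma is what your phrase ``discrete maximum principle / Perron--Frobenius argument'' points at, but it is the crux of the whole theorem, not routine bookkeeping, and it is nowhere established in your proposal.

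Two concrete points in your sketch would fail as written. First, hypothesis~3 is a disjunction whose index sets and graphs refer to $\vecc{A}$, $\vecc{A}^z$, $\vecc{A}^s$ --- not to $\vecc{M}_2$. Your plan to transfer connectivity to $\vecc{M}_2$ only matches the case where the connecting matrix is $\vecc{A}^s$ (whose graph lies inside that of $\vecc{M}_2$), and even then requires proving the inclusions $\mathcal{N}^0(\vecc{M}_2\vec{e})\subseteq\mathcal{N}^0(\vecc{A}\vec{e})$ and $\mathcal{N}^+(\vecc{A}\vec{e})\subseteq\mathcal{N}^+(\vecc{M}_2\vec{e})$. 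If hypothesis~3 holds only with $\vecc{A}^z$, your argument has nothing to act on; the standard proof handles that case differently, using that $(\vecc{M}_1^{-1})_{ij}>0$ whenever an $\vecc{A}^z$-path runs from $i$ to $j$, which forces $\vecc{M}_2\vec{e}>0$ outright. Second, the subinvariance claim ``$(\vecc{C}^{-1}\vecc{R})\vec{e}\leq\vec{e}$ forces $\rho(\vecc{C}^{-1}\vecc{R})\leq 1$'' is false for merely nonnegative $\vec{e}$: with $\vecc{B}=\begin{pmatrix}0 & 0\\ 0 & 2\end{pmatrix}$ and $\vec{e}=(1,0)^T$ one has $\vecc{B}\vec{e}\leq\vec{e}$ yet $\rho(\vecc{B})=2$. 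You must first prove $\vec{e}>0$ strictly, and that again is the connectivity lemma in disguise. Finally, a cosmetic but real mismatch: even completed, your route exhibits $\vecc{A}$ as the product of \emph{three} nonsingular M-matrices, $\vecc{M}_1$, $\vecc{A}_d^{-1}\vecc{M}_2$ and $\vecc{I}-\vecc{C}^{-1}\vecc{R}$, proving $\vecc{A}^{-1}\geq 0$ but not the stated two-factor form. The cited proof gets exactly two factors, and avoids both the Neumann series and any invertibility claim about $\vecc{M}_2$, by writing $\vecc{A} = \vecc{M}_1\left(\vecc{A}_d^{-1}\vecc{A}_2\right)$ with $\vecc{A}_2 := \vecc{A}_d\vecc{M}_1^{-1}\vecc{A}$, observing $\vecc{A}_2\leq\vecc{M}_2$ entrywise (so $\vecc{A}_2$ is a Z-matrix), and applying the connectivity lemma to $\vecc{A}_2$ itself.
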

In the rest of this section, to obtain monotonicity, we will show that  the fourth order scheme matrix satisfies the conditions in Theorem \ref{thm:Lorenz_cond} under suitable mesh and time step constraints.

\subsection{The fourth order scheme in one dimension}\label{sec:1D4order} 
In general, the high order finite element methods do not have an M-matrix structure. But it is possible to show that they are products of M-matrices. 
Next we verify that  the Lorenz's condition in Theorem~\ref{thm:Lorenz_cond} can be satisfied for the matrix in the scheme \eqref{1d-4th-scheme}.  
For convenience of writing and similar to references \cite{hu2021positivity,shen2021discrete}, we use operator notation.  Let $\mathcal{A}$ be  the linear operator  corresponding the scheme matrix $\vecc{A}$. The linear operator $\mathcal{A}_d$ (associated with the diagonal matrix $\vecc{A}_d$) is:
\begin{align*}
&\mbox{If $i$ is odd,}\quad
\mathcal{A}_d(\vec{g}^{\,n+1})_{i}
=\, \invm_{i}g^{n+1}_{i}
+ \Delta{t}\,\frac{D(\invm_{i-2} + 4\invm_{i-1} + 18\invm_{i} + 4\invm_{i+1} + \invm_{i+2})}{8h^2}g^{n+1}_{i};&\\
&\mbox{if $i$ is even,}\quad
\mathcal{A}_d(\vec{g}^{\,n+1})_{i}
= \invm_{i}g^{n+1}_{i}
+ \Delta{t}\, \frac{D(\invm_{i-1}+\invm_{i+1})}{h^2}g^{n+1}_{i}.
\end{align*}
Let $a^+ = \max\{a,0\}$ be the positive part and $a^- = -\min\{a,0\}$ negative parts of a number $a$. The operator $\mathcal{A}_a^+$ (associated with the matrix $\vecc{A}_a^+$) is given by:
\begin{align*}
&\mbox{If $i$ is odd,}\\
&\resizebox{.99\hsize}{!}{$\mathcal{A}_a^+(\vec{g}^{\,n+1})_{i}
= \left(-\frac{\Delta{t}}{h}\frac{u_{i-2}}{4} + \Delta{t}\,\frac{D(3\invm_{i-2} - 4\invm_{i-1} + 3\invm_{i})}{8h^2}\right)^+ g^{n+1}_{i-2}
+ \left(\frac{\Delta{t}}{h}\frac{u_{i+2}}{4} + \Delta{t}\,\frac{D(3\invm_{i} - 4\invm_{i+1} + 3\invm_{i+2})}{8h^2}\right)^+ g^{n+1}_{i+2}$};\\
&\mbox{if $i$ is even,}\quad
\mathcal{A}_a^+(\vec{g}^{\,n+1})_{i} = 0.
\end{align*}
By definition of $(\cdot)^+$, it is straightforward to see the matrix $\vecc{A}^{+}_a$ is entry-wise non-negative. Let $\vecc{A}_a^- = \vecc{A} - \vecc{A}_d - \vecc{A}_a^+$ and we further split it by introducing the operator $\mathcal{A}^z$ (associated with the matrix $\vecc{A}^z$) as follows:
\begin{align*}
&\mbox{If $i$ is odd,}\\
&\resizebox{.99\hsize}{!}{$\mathcal{A}^{z}(\vec{g}^{\,n+1})_{i} =
- \left(-\frac{\Delta{t}}{h}u_{i-1} + \Delta{t}\,\frac{D(4\invm_{i-2} + 12\invm_{i})}{8h^2} - \Big(-\frac{\Delta{t}}{h}\frac{u_{i-2}}{4} + \Delta{t}\,\frac{D(3\invm_{i-2} - 4\invm_{i-1} + 3\invm_{i})}{8h^2}\Big)^+ \right) g^{n+1}_{i-1}$} \\
&\resizebox{.99\hsize}{!}{$- \left(\frac{\Delta{t}}{h}u_{i+1} + \Delta{t}\,\frac{D(12\invm_{i} + 4\invm_{i+2})}{8h^2} - \Big(\frac{\Delta{t}}{h}\frac{u_{i+2}}{4} + \Delta{t}\,\frac{D(3\invm_{i} - 4\invm_{i+1} + 3\invm_{i+2})}{8h^2}\Big)^+ \right) g^{n+1}_{i+1}$} \\
&\resizebox{.99\hsize}{!}{$- \left(-\frac{\Delta{t}}{h}\frac{u_{i-2}}{4} + \Delta{t}\,\frac{D(3\invm_{i-2} - 4\invm_{i-1} + 3\invm_{i})}{8h^2}\right)^- g^{n+1}_{i-2}
- \left(\frac{\Delta{t}}{h}\frac{u_{i+2}}{4} + \Delta{t}\,\frac{D(3\invm_{i} - 4\invm_{i+1} + 3\invm_{i+2})}{8h^2}\right)^- g^{n+1}_{i+2}$}; \\
&\mbox{if $i$ is even,}\quad
\mathcal{A}^{z}(\vec{g}^{\,n+1})_{i} = 0.
\end{align*}
Let $\vecc{A}^s = \vecc{A}_a^- -\vecc{A}^z$. The operator $\mathcal{A}^s$ (associated with the matrix $\vecc{A}^s$) is as follows:
\begin{align*}
&\mbox{If $i$ is odd,}\\
&\resizebox{.99\hsize}{!}{$\mathcal{A}^s(\vec{g}^{\,n+1})_{i}
= -\left(-\frac{\Delta{t}}{h}\frac{u_{i-2}}{4} + \Delta{t}\,\frac{D(3\invm_{i-2} - 4\invm_{i-1} + 3\invm_{i})}{8h^2}\right)^+ g^{n+1}_{i-2}
- \left(\frac{\Delta{t}}{h}\frac{u_{i+2}}{4} + \Delta{t}\,\frac{D(3\invm_{i} - 4\invm_{i+1} + 3\invm_{i+2})}{8h^2}\right)^+ g^{n+1}_{i+2}$};\\
&\mbox{if $i$ is even,}\\
&\resizebox{.99\hsize}{!}{$\mathcal{A}^{s}(\vec{g}^{\,n+1})_{i} = 
-\left(-\frac{\Delta{t}}{h}\frac{u_{i-1}}{2} + \frac{\Delta{t}}{h^2}\frac{D(3\invm_{i-1} + \invm_{i+1})}{4}\right)g^{n+1}_{i-1}
-\left(\frac{\Delta{t}}{h}\frac{u_{i+1}}{2} + \frac{\Delta{t}}{h^2}\frac{D(\invm_{i-1} + 3\invm_{i+1})}{4}\right)g^{n+1}_{i+1}$}.
\end{align*}
It is easy to verify that the following mesh constraint is sufficient for $\vecc{A}^z\leq0$ and $\vecc{A}^s\leq0$:
\begin{align}\label{eq:1D4order_suff1}
h\max\{|u_i|,|u_{i+1}|,|u_{i+2}|\}\leq D\min\{\invm_i,\invm_{i+1},\invm_{i+2}\},\quad
\text{for~odd}~i.
\end{align}
The matrix $\vecc{A}_d+\vecc{A}^z$ is an $N$-by-$N$ real square matrix with positive diagonal entries and nonpositive off-diagonals. We use Lemma~\ref{thm:M_matrix1} to verify the first condition in Theorem~\ref{thm:Lorenz_cond}. Let $\vecc{D}$ equal to the identity matrix and let $\vec{1} = [1,\cdots,1]^\mathrm{T}$. Then, the row sum of $(\vecc{A}_d+\vecc{A}^z)\vecc{D}$ can be evaluated by $(\vecc{A}_d+\vecc{A}^z)\vec{1}$, namely
\begin{align*}
&\mbox{If $i$ is odd, the sum of the i-th row is:}\quad 
\invm_{i} + \frac{\Delta{t}}{4h}(-u_{i-2}+4u_{i-1}-4u_{i+1}+u_{i+2});\\
&\mbox{if $i$ is even, the sum of the i-th row is:}\quad
\invm_{i} + \frac{\Delta{t}}{h^2}D(\invm_{i-1} + \invm_{i+1}).
\end{align*}
To guarantee the positive row sums of the matrix $(\vecc{A}_d+\vecc{A}^z)\vecc{D}$, the following constraints on time step size are sufficient:
\begin{subequations}\label{eq:1D4order_suff2}
\begin{align}\label{eq:1D4order_suff2a}
\invm_{i} + \Delta{t}\frac{-u_{i-2}+4u_{i-1}-4u_{i+1}+u_{i+2}}{4h} > 0, \quad\text{for odd}~i.
\end{align}
Recall that the velocity field $u(x)$ is incompressible in Model~1, thus $u \equiv C$ in one dimension. So \eqref{eq:1D4order_suff2} is trivially satisfied for positive measure $\invm$.
Thus $\vecc{A}_d+\vecc{A}^z$ is a nonsingular M-matrix. 
Meanwhile, the divergence free velocity in one dimension also implies $\frac{u_{i-1}-u_{i+1}}{2h} = 0$, namely 
\begin{align}\label{eq:1D4order_suff2b}
\invm_{i} + \Delta{t}\frac{u_{i-1}-u_{i+1}}{2h} > 0, \quad\text{for even}~i.
\end{align}
\end{subequations}
Thus, we have $\vecc{A}\vec{1} > 0$.
Therefore, $\mathcal{N}^0(\vecc{A}\vec{1}) = \emptyset$ and the third condition in Theorem~\ref{thm:Lorenz_cond} is trivially satisfied. Our next goal is to seek a sufficient condition such that the second condition in Theorem~\ref{thm:Lorenz_cond} hold. 
By comparing $\mathcal A_a^+(\vec g^{n+1})_i$ with $\mathcal A^z\circ \mathcal (A^d)^{-1} \circ\mathcal A^s(\vec g^{n+1})_i$, it is straightforward to verify that
$\vecc{A}_a^+\leq \vecc{A}^z\vecc{A}_d^{-1}\vecc{A}^s$ is equivalent to the following: for odd $i$,
\begin{align}\notag
&\left(\invm_{i-1} - \frac{\Delta{t}}{h}\frac{u_{i-2}}{2} + \frac{\Delta{t}}{h^2}\frac{D(7\invm_{i-2} + 5\invm_{i})}{4}\right)
\left(-\frac{\Delta{t}}{h}\frac{u_{i-2}}{4} + \frac{\Delta{t}}{h^2}\frac{D(3\invm_{i-2}-4\invm_{i-1}+3\invm_{i})}{8}\right)\\
\leq&\,
\left(- \frac{\Delta{t}}{h}\frac{u_{i-2}}{2} + \frac{\Delta{t}}{h^2}\frac{D(3\invm_{i-2} + \invm_{i})}{4}\right)
\left(- \frac{\Delta{t}}{h}u_{i-1} + \frac{\Delta{t}}{h^2}\frac{D(4\invm_{i-2} + 12\invm_{i})}{8}\right).\label{add-condition1}
\end{align}
Multiply $32\,(\frac{h^2}{\Delta{t}})^2$ on both side of above inequality, after some manipulation, we get:
\begin{align*}
&\Big(4\frac{h^2}{\Delta{t}}\invm_{i-1} - 2h u_{i-2} + D(7\invm_{i-2} + 5\invm_{i})\Big)
\Big(-2h u_{i-2} + D(3\invm_{i-2}-4\invm_{i-1}+3\invm_{i})\Big)\\
\leq&\,
\Big(- 2h u_{i-2} + D(3\invm_{i-2} + \invm_{i})\Big)
\Big(-8h u_{i-1} + D(4\invm_{i-2} + 12\invm_{i})\Big).
\end{align*}
Let $b = \max\{\invm_{i-2},\invm_{i-1},\invm_{i}\}$ and $s = \min\{\invm_{i-2},\invm_{i-1},\invm_{i}\}$, namely, the largest and smallest quadrature point values of $\invm$ on an element $[x_{i-2},x_i]$. Assume the finite difference grid spacing satisfies:
\begin{align}\label{eq:1D4order_suff4}
h\max\{|u_{i-2}|, |u_{i-1}|, |u_i|\} \leq \frac{1}{4}D\min\{\invm_{i-2}, \invm_{i-1}, \invm_i\},\quad\text{for odd}~i.
\end{align}
Note, \eqref{eq:1D4order_suff4} implies \eqref{eq:1D4order_suff1}. It is easy to verify that a sufficient condition for \eqref{add-condition1} is
\begin{align*}
\Big(\frac{1}{4}Ds + 2(3D+\frac{h^2}{\Delta{t}})b\Big)
\Big(\frac{1}{2}Ds + D(6b-4s)\Big) \leq
(2Ds-\frac{1}{4}Ds) (16Ds-2Ds).
\end{align*}
Therefore, a sufficient condition is:
\begin{align*}
3D+\frac{h^2}{\Delta{t}} \leq \frac{49D s^2}{2b(12b-7s)} - \frac{D s}{8b}.
\end{align*}
Now, we simplify the sufficient condition above. 
The invariant measure $\invm\geq \epsilon_0>0$, define $r = b/s$, then above inequality can be rewritten as  
\begin{align*}
\frac{h^2}{\Delta{t}} 
\leq \frac{49D}{2r(12r-7)} - \frac{D}{8r} - 3D
= \frac{7}{2}D\Big(\frac{1}{r-\frac{7}{12}} - \frac{1}{r}\Big) - \frac{D}{8r} - 3D.
\end{align*}
From the definition of $r$, we know $r \geq 1$. Thus, it is sufficient to employ the conditions $r\in[1,1.15]$ and
\begin{align*}
\frac{h^2}{\Delta{t}} \leq 0.02D
< \min_{r\in[1,1.15]} \left\{\frac{7}{2}D\Big(\frac{1}{r-\frac{7}{12}} - \frac{1}{r}\Big) - \frac{D}{8r} - 3D\right\}.
\end{align*}
This indicates we only need to find a suitable upper bound on $h$ such that $b \leq 1.15s$ (namely $r\in[1,1.15]$) holds. Recall $\invm$ is continuously differentiable. Assume $\invm$ take its maximum at point $x^\ast$ on cell $[x_{i-2},x_i]$ and $\invm$ take its minimum at point $x_\ast$ on cell $[x_{i-2},x_i]$. 
By mean value theorem, there exist a point $\xi \in [x_{i-2},x_i]$ such that
\begin{align*}
\invm(x^\ast) = \invm(x_\ast) + (x^\ast - x_\ast)\invm'{(\xi)}.
\end{align*}
Therefore
\begin{align*}
b \leq \invm(x^\ast) = \invm(x_\ast) + (x^\ast - x_\ast)\invm'{(\xi)}
\leq s + 2h \max_{[x_{i-2},x_i]}|\invm'|,
\end{align*}
which means in order to let $b \leq 1.15s$ hold, we can employ a sufficient condition as follows
\begin{align*}
s + 2h \max_{[x_{i-2},x_i]}|\invm'| \leq 1.15s.
\end{align*}
To this end, we obtain a constraint on $h$, as follows
\begin{align}\label{eq:1D4order_suff5}
h \max_{[x_{i-2},x_i]}|\invm'| \leq 0.075\min\{\invm_{i-2},\invm_{i-1},\invm_i\},\quad\text{for odd}~i.
\end{align}
As a summary, we have the following theorem:
\begin{thm}\label{thm:1D4order_suff_conds}
Under the mesh and time step constraints \eqref{eq:1D4order_suff2}, \eqref{eq:1D4order_suff4}, \eqref{eq:1D4order_suff5} and $\frac{\Delta t}{h^2}\geq \frac{50}{D}$, the coefficient matrix for the unknown vector $\vec{g}^{\,n+1}$ in the fourth order finite difference scheme \eqref{1d-4th-scheme} satisfies the Lorenz's conditions, so it is a product of two M-matrices thus  monotone. In particular, in one dimension, for a discrete divergence free velocity field $u$ (which is constant), the matrix in fourth order finite difference scheme \eqref{1d-4th-scheme} is monotone under the following constraints: 
 $\frac{\Delta t}{h^2}\geq \frac{50}{D}$ and, for odd $i$,
\begin{align*}
h\max\{|u_i|,|u_{i+1}|,|u_{i+2}|\}\leq\frac{1}{4}D\min\{\invm_i,\invm_{i+1},\invm_{i+2}\},\\
h \max_{[x_{i},x_{i+2}]}|\invm'| \leq 0.075\min\{\invm_{i},\invm_{i+1},\invm_{i+2}\}.
\end{align*} 
\end{thm}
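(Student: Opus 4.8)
The plan is to verify the three hypotheses of Lorenz's Theorem~\ref{thm:Lorenz_cond} for the coefficient matrix $\vecc{A}$ of scheme \eqref{1d-4th-scheme}, using the explicit splitting $\vecc{A}_a^- = \vecc{A}^z + \vecc{A}^s$ furnished by the operators $\mathcal{A}^z$ and $\mathcal{A}^s$ defined above. First I would confirm that $\vecc{A}^z \le 0$ and $\vecc{A}^s \le 0$ entrywise: every entry is a diffusion contribution of size $\mathcal{O}(D\invm/h^2)$ minus a convection contribution of size $\mathcal{O}(|u|/h)$, so the mesh constraint \eqref{eq:1D4order_suff1} (weaker than \eqref{eq:1D4order_suff4}) makes diffusion dominate and fixes the sign. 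The third Lorenz condition is then cheap: taking $\vec{e} = \vec{1}$, the row sums of $\vecc{A}\vec{1}$ are $\invm_i + \Delta t\,(-u_{i-2}+4u_{i-1}-4u_{i+1}+u_{i+2})/(4h)$ at knots and $\invm_i + \Delta t\,D(\invm_{i-1}+\invm_{i+1})/h^2$ at midpoints; since incompressibility forces $u \equiv C$ in one dimension, the convective sums vanish and \eqref{eq:1D4order_suff2} gives $\vecc{A}\vec{1} > 0$, whence $\mathcal{N}^0(\vecc{A}\vec{1}) = \emptyset$ and the connectivity requirement is vacuous. The same row-sum computation, applied to $\vecc{A}_d + \vecc{A}^z$ with $\vecc{D} = I$ in Theorem~\ref{thm:M_matrix1}, shows that matrix has positive diagonal, nonpositive off-diagonals, and positive row sums, hence is a nonsingular M-matrix, settling the first Lorenz condition.

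The heart of the argument is the second Lorenz condition, $\vecc{A}_a^+ \le \vecc{A}^z \vecc{A}_d^{-1} \vecc{A}^s$. Since $\vecc{A}_a^+$ is supported only on knot rows (its even rows vanish), I would check it entrywise for odd $i$, where it reduces to the quadratic inequality \eqref{add-condition1}: the direct positive knot-to-knot coupling on the left must be dominated by the two-step coupling through the eliminated midpoints on the right. After clearing denominators by multiplying through by $32(h^2/\Delta t)^2$, the task becomes a polynomial inequality in $\invm_{i-2},\invm_{i-1},\invm_i$, the scaled drifts $h u_{i-2}, h u_{i-1}$, and the parameter $h^2/\Delta t$.

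The main obstacle --- and the only genuinely delicate step --- is to close this polynomial inequality uniformly over each element. My plan is to bound it using only the extremes $b = \max\{\invm_{i-2},\invm_{i-1},\invm_i\}$ and $s = \min\{\invm_{i-2},\invm_{i-1},\invm_i\}$: under \eqref{eq:1D4order_suff4} the convection terms $h u$ are absorbed into the diffusion terms, and replacing each factor by its worst case in $b,s$ yields the scalar sufficient condition $3D + h^2/\Delta t \le 49Ds^2/(2b(12b-7s)) - Ds/(8b)$. Writing $r = b/s \ge 1$, this is a one-variable inequality whose right-hand side $\tfrac{7}{2}D(1/(r-\tfrac{7}{12}) - 1/r) - D/(8r) - 3D$ blows up as $r \downarrow \tfrac{7}{12}$ but decreases only mildly near $r=1$, so restricting to $r \in [1,1.15]$ keeps it above $0.02D$, exactly what the time-step condition $h^2/\Delta t \le 0.02D$ (equivalently $\Delta t/h^2 \ge 50/D$) demands. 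Finally, to force $r \le 1.15$ I would invoke the $\mathcal{C}^1$ regularity of $\invm$: by the mean value theorem $b \le s + 2h\max|\invm'|$ on the element, so the grid constraint \eqref{eq:1D4order_suff5} guarantees $b \le 1.15\,s$. Collecting \eqref{eq:1D4order_suff2}, \eqref{eq:1D4order_suff4}, \eqref{eq:1D4order_suff5} and $\Delta t/h^2 \ge 50/D$ verifies all three Lorenz conditions, giving the factorization of $\vecc{A}$ into two M-matrices and hence $\vecc{A}^{-1} \ge 0$.
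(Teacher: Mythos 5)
Your proposal is correct and follows essentially the same route as the paper's own proof: the identical splitting $\vecc{A}_a^- = \vecc{A}^z + \vecc{A}^s$, verification of the three Lorenz conditions with $\vec{e}=\vec{1}$, reduction of condition 2 to the quadratic inequality \eqref{add-condition1}, the worst-case bound in $b,s$ with $r=b/s\in[1,1.15]$ yielding $h^2/\Delta t\leq 0.02D$, and the mean value theorem argument producing \eqref{eq:1D4order_suff5}. One cosmetic slip worth noting: the midpoint row sums you quote, $\invm_i + \Delta t\, D(\invm_{i-1}+\invm_{i+1})/h^2$, are those of $\vecc{A}_d+\vecc{A}^z$ (relevant to Lorenz condition 1), whereas the midpoint row sums of $\vecc{A}$ itself are $\invm_i + \Delta t\,(u_{i-1}-u_{i+1})/(2h)$; since both are positive for a constant velocity field, the conclusions $\vecc{A}\vec{1}>0$ and $\mathcal{N}^0(\vecc{A}\vec{1})=\emptyset$ are unaffected.
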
 
\begin{rem}
 In practice, to realize the mesh size and time step satifying the constraints above, one can first choose a small enough mesh size $h$, then choose a large enough time step $\Delta t$. For instance, for a constant velocity case, for a small enough $h$, one can use $\Delta t\geq \frac{50}{D} h^2$. We emphasize that the sufficient conditions above are not sharp for monotonicity to hold, but with a fixed mesh size $h$ monotonicity will be lost in the fourth order scheme when $\Delta t\to 0$.  
\end{rem}

\subsection{The fourth order scheme in two dimension}
Next we  derive a sufficient mesh size and time step conditions
for the two-dimensional fourth order scheme to satisfy the Lorenz's conditions in Theorem~\ref{thm:Lorenz_cond}. For convenience, we follow \cite{hu2021positivity,shen2021discrete}, to use operator notation for all matrices.

Similar to the one dimensional discussion above, $\mathcal{A}$ denotes the linear operator for the scheme matrix. The linear operator $\mathcal{A}_d$ (associated with the diagonal matrix $\vecc{A}_d$) is:
\begin{align*}
\resizebox{.99\hsize}{!}{$\mbox{If $(x_i,y_j)$ is a knot,}\quad
\mathcal{A}_d(\vec{g}^{\,n+1})_{ij}
=\, \invm_{i,j}g^{n+1}_{ij}
+ \Delta{t}\,\frac{D(\invm_{i-2,j} + 4\invm_{i-1,j} + 18\invm_{i,j} + 4\invm_{i+1,j} + \invm_{i+2,j})}{8h^2}g^{n+1}_{ij}$}&\\
+\, \Delta{t}\,\frac{D(\invm_{i,j-2} + 4\invm_{i,j-1} + 18\invm_{i,j} + 4\invm_{i,j+1} + \invm_{i,j+2})}{8h^2}g^{n+1}_{ij};&\\
\resizebox{.99\hsize}{!}{$\mbox{if $(x_i,y_j)$ is an edge (parallel to $y$-axis) center,}\quad
\mathcal{A}_d(\vec{g}^{\,n+1})_{ij}
=\, \invm_{i,j}g^{n+1}_{ij}
+ \Delta{t}\, \frac{D(\invm_{i,j-1} + \invm_{i,j+1})}{h^2}g^{n+1}_{ij}$}&\\
+\, \Delta{t}\, \frac{D(\invm_{i-2,j} + 4\invm_{i-1,j} + 18\invm_{ij} + 4\invm_{i+1,j} + \invm_{i+2,j})}{8h^2}g^{n+1}_{ij};&\\
\resizebox{.99\hsize}{!}{$\mbox{if $(x_i,y_j)$ is a cell center,}\quad
\mathcal{A}_d(\vec{g}^{\,n+1})_{ij}
= \invm_{i,j}g^{n+1}_{ij}
+ \Delta{t}\, \frac{D(\invm_{i-1,j}+\invm_{i+1,j}+\invm_{i,j-1}+\invm_{i,j+1})}{h^2}g^{n+1}_{ij}.$}&
\end{align*}
For $(x_i,y_j)$ is an edge (parallel to $x$-axis) center, this case is very similar to the case which $(x_i,y_j)$ is an edge (parallel to $y$-axis) center, thus omitted.
For the sake of brevity, we omit the case $(x_i,y_j)$ is an edge (parallel to $x$-axis) center when defining operators.
Recall that we use notation $(\cdot)^+ = \max\{\cdot,0\}$ to denote the positive part and $(\cdot)^- = -\min\{\cdot,0\}$ to denote the negative parts of a number $(\cdot)$.
The operator $\mathcal{A}_a^+$ (associated with the matrix $\vecc{A}_a^+$) is given by:
\begin{align*}
&\mbox{If $(x_i,y_j)$ is a knot,}\\
&\resizebox{.99\hsize}{!}{$\mathcal{A}_a^+(\vec{g}^{\,n+1})_{ij}
= \left(-\frac{\Delta{t}}{h}\frac{u_{i-2,j}}{4} + \Delta{t}\,\frac{D(3\invm_{i-2,j} - 4\invm_{i-1,j} + 3\invm_{i,j})}{8h^2}\right)^+ g^{n+1}_{i-2,j}
+ \left(\frac{\Delta{t}}{h}\frac{u_{i+2,j}}{4} + \Delta{t}\,\frac{D(3\invm_{i,j} - 4\invm_{i+1,j} + 3\invm_{i+2,j})}{8h^2}\right)^+ g^{n+1}_{i+2,j}$}\\
&\resizebox{.99\hsize}{!}{$+ \left(-\frac{\Delta{t}}{h}\frac{v_{i,j-2}}{4} + \Delta{t}\,\frac{D(3\invm_{i,j-2} - 4\invm_{i,j-1} + 3\invm_{i,j})}{8h^2}\right)^+ g^{n+1}_{i,j-2}
+ \left(\frac{\Delta{t}}{h}\frac{v_{i,j+2}}{4} + \Delta{t}\,\frac{D(3\invm_{i,j} - 4\invm_{i,j+1} + 3\invm_{i,j+2})}{8h^2}\right)^+ g^{n+1}_{i,j+2}$};\\
&\mbox{if $(x_i,y_j)$ is an edge (parallel to $y$-axis) center,}\\
&\resizebox{.99\hsize}{!}{$\mathcal{A}_a^+(\vec{g}^{\,n+1})_{ij} 
= \left(-\frac{\Delta{t}}{h}\frac{u_{i-2,j}}{4} + \Delta{t}\,\frac{D(3\invm_{i-2,j} - 4\invm_{i-1,j} + 3\invm_{i,j})}{8h^2}\right)^+ g^{n+1}_{i-2,j}
+ \left(\frac{\Delta{t}}{h}\frac{u_{i+2,j}}{4} + \Delta{t}\,\frac{D(3\invm_{i+2,j} - 4\invm_{i+1,j} + 3\invm_{i,j})}{8h^2}\right)^+ g^{n+1}_{i+2,j}$};\\
&\mbox{if $(x_i,y_j)$ is a cell center,}\quad
\mathcal{A}_a^+(\vec{g}^{\,n+1})_{ij} = 0.
\end{align*}
It is straightforward to see the matrix $\vecc{A}^{+}_a$ is entry-wise non-negative. Let $\vecc{A}_a^- = \vecc{A} - \vecc{A}_d - \vecc{A}_a^+$ and we further split it by introducing the operator $\mathcal{A}^z$ (associated with the matrix $\vecc{A}^z$) as follows:
\begin{align*}
&\mbox{If $(x_i,y_j)$ is a knot,}\\
&\resizebox{.99\hsize}{!}{$\mathcal{A}^{z}(\vec{g}^{\,n+1})_{ij} =
- \left(-\frac{\Delta{t}}{h}u_{i-1,j} + \Delta{t}\,\frac{D(4\invm_{i-2,j} + 12\invm_{i,j})}{8h^2} - \Big(-\frac{\Delta{t}}{h}\frac{u_{i-2,j}}{4} + \Delta{t}\,\frac{D(3\invm_{i-2,j} - 4\invm_{i-1,j} + 3\invm_{i,j})}{8h^2}\Big)^+ \right) g^{n+1}_{i-1,j}$} \\
&\resizebox{.99\hsize}{!}{$- \left(\frac{\Delta{t}}{h}u_{i+1,j} + \Delta{t}\,\frac{D(12\invm_{i,j} + 4\invm_{i+2,j})}{8h^2} - \Big(\frac{\Delta{t}}{h}\frac{u_{i+2,j}}{4} + \Delta{t}\,\frac{D(3\invm_{i,j} - 4\invm_{i+1,j} + 3\invm_{i+2,j})}{8h^2}\Big)^+ \right) g^{n+1}_{i+1,j}$} \\
&\resizebox{.99\hsize}{!}{$- \left(-\frac{\Delta{t}}{h}v_{i,j-1} + \Delta{t}\,\frac{D(4\invm_{i,j-2} + 12\invm_{i,j})}{8h^2} - \Big(-\frac{\Delta{t}}{h}\frac{v_{i,j-2}}{4} + \Delta{t}\,\frac{D(3\invm_{i,j-2} - 4\invm_{i,j-1} + 3\invm_{i,j})}{8h^2}\Big)^+ \right) g^{n+1}_{i,j-1}$} \\
&\resizebox{.99\hsize}{!}{$- \left(\frac{\Delta{t}}{h}v_{i,j+1} + \Delta{t}\,\frac{D(12\invm_{i,j} + 4\invm_{i,j+2})}{8h^2} - \Big(\frac{\Delta{t}}{h}\frac{v_{i,j+2}}{4} + \Delta{t}\,\frac{D(3\invm_{i,j} - 4\invm_{i,j+1} + 3\invm_{i,j+2})}{8h^2}\Big)^+ \right) g^{n+1}_{i,j+1}$} \\ 
&\resizebox{.99\hsize}{!}{$- \left(-\frac{\Delta{t}}{h}\frac{u_{i-2,j}}{4} + \Delta{t}\,\frac{D(3\invm_{i-2,j} - 4\invm_{i-1,j} + 3\invm_{i,j})}{8h^2}\right)^- g^{n+1}_{i-2,j}
- \left(\frac{\Delta{t}}{h}\frac{u_{i+2,j}}{4} + \Delta{t}\,\frac{D(3\invm_{i,j} - 4\invm_{i+1,j} + 3\invm_{i+2,j})}{8h^2}\right)^- g^{n+1}_{i+2,j}$} \\ 
&\resizebox{.99\hsize}{!}{$- \left(-\frac{\Delta{t}}{h}\frac{v_{i,j-2}}{4} + \Delta{t}\,\frac{D(3\invm_{i,j-2} - 4\invm_{i,j-1} + 3\invm_{i,j})}{8h^2}\right)^- g^{n+1}_{i,j-2}
- \left(\frac{\Delta{t}}{h}\frac{v_{i,j+2}}{4} + \Delta{t}\,\frac{D(3\invm_{i,j} - 4\invm_{i,j+1} + 3\invm_{i,j+2})}{8h^2}\right)^- g^{n+1}_{i,j+2}$};\\
&\mbox{if $(x_i,y_j)$ is an edge (parallel to $y$-axis) center,}\\
&\resizebox{.99\hsize}{!}{$\mathcal{A}^{z}(\vec{g}^{\,n+1})_{ij} = 
- \left(-\frac{\Delta{t}}{h}u_{i-1,j} + \Delta{t}\,\frac{D(4\invm_{i-2,j} + 12\invm_{i,j})}{8h^2} - \Big(-\frac{\Delta{t}}{h}\frac{u_{i-2,j}}{4} + \Delta{t}\,\frac{D(3\invm_{i-2,j} - 4\invm_{i-1,j} + 3\invm_{i,j})}{8h^2}\Big)^+\right) g^{n+1}_{i-1,j}$} \\
&\resizebox{.99\hsize}{!}{$- \left(\frac{\Delta{t}}{h}u_{i+1,j} + \Delta{t}\,\frac{D(12\invm_{i,j} + 4\invm_{i+2,j})}{8h^2} - \Big(\frac{\Delta{t}}{h}\frac{u_{i+2,j}}{4} + \Delta{t}\,\frac{D(3\invm_{i,j} - 4\invm_{i+1,j} + 3\invm_{i+2,j})}{8h^2}\Big)^+\right) g^{n+1}_{i+1,j}$} \\
&\resizebox{.99\hsize}{!}{$- \left(-\frac{\Delta{t}}{h}\frac{u_{i-2,j}}{4} + \Delta{t}\,\frac{D(3\invm_{i-2,j} - 4\invm_{i-1,j} + 3\invm_{i,j})}{8h^2}\right)^- g^{n+1}_{i-2,j}
- \left(\frac{\Delta{t}}{h}\frac{u_{i+2,j}}{4} + \Delta{t}\,\frac{D(3\invm_{i+2,j} - 4\invm_{i+1,j} + 3\invm_{i,j})}{8h^2}\right)^- g^{n+1}_{i+2,j}$};\\
&\mbox{If $(x_i,y_j)$ is a cell center,}\quad
\mathcal{A}^{z}(\vec{g}^{\,n+1})_{ij} = 0.
\end{align*}
Similar to \eqref{eq:1D4order_suff1} in subsection~\ref{sec:1D4order}, it is easy to verify that $\vecc{A}^z \leq 0$ under the following sufficient condition: for odd $i$ and odd $j$,
\begin{subequations}\label{eq:2D4order_suff1}
\begin{align}
h\max\{|u_{i,j}|,|u_{i+1,j}|,|u_{i+2,j}|\} &\leq D\min\{\invm_{i,j},\invm_{i+1,j},\invm_{i+2,j}\},\\
\text{and}\quad
h\max\{|v_{i,j}|,|v_{i,j+1}|,|v_{i,j+2}|\} &\leq D\min\{\invm_{i,j},\invm_{i,j+1},\invm_{i,j+2}\}.
\end{align}
\end{subequations}
The matrix $\vecc{A}^s = \vecc{A}_a^{-} - \vecc{A}^z$. Therefore, the operator $\mathcal{A}^s$ (associated with the matrix $\vecc{A}^s$) is as follows:
\begin{align*}
&\mbox{If $(x_i,y_j)$ is a knot,}\\
&\resizebox{.99\hsize}{!}{$\mathcal{A}^{s}(\vec{g}^{\,n+1})_{ij} =
- \left(-\frac{\Delta{t}}{h}\frac{u_{i-2,j}}{4} + \Delta{t}\,\frac{D(3\invm_{i-2,j} - 4\invm_{i-1,j} + 3\invm_{i,j})}{8h^2}\right)^+ g^{n+1}_{i-1,j}
- \left(\frac{\Delta{t}}{h}\frac{u_{i+2,j}}{4} + \Delta{t}\,\frac{D(3\invm_{i,j} - 4\invm_{i+1,j} + 3\invm_{i+2,j})}{8h^2}\right)^+ g^{n+1}_{i+1,j}$} \\
&\resizebox{.99\hsize}{!}{$- \left(-\frac{\Delta{t}}{h}\frac{v_{i,j-2}}{4} + \Delta{t}\,\frac{D(3\invm_{i,j-2} - 4\invm_{i,j-1} + 3\invm_{i,j})}{8h^2}\right)^+ g^{n+1}_{i,j-1}
- \left(\frac{\Delta{t}}{h}\frac{v_{i,j+2}}{4} + \Delta{t}\,\frac{D(3\invm_{i,j} - 4\invm_{i,j+1} + 3\invm_{i,j+2})}{8h^2}\right)^+ g^{n+1}_{i,j+1}$};\\
&\mbox{if $(x_i,y_j)$ is an edge (parallel to $y$-axis) center,}\\
&\resizebox{.99\hsize}{!}{$\mathcal{A}^{s}(\vec{g}^{\,n+1})_{ij} =
- \left(-\frac{\Delta{t}}{h}\frac{u_{i-2,j}}{4} + \Delta{t}\,\frac{D(3\invm_{i-2,j} - 4\invm_{i-1,j} + 3\invm_{i,j})}{8h^2}\right)^+ g^{n+1}_{i-1,j}
- \left(\frac{\Delta{t}}{h}\frac{u_{i+2,j}}{4} + \Delta{t}\,\frac{D(3\invm_{i,j} - 4\invm_{i+1,j} + 3\invm_{i+2,j})}{8h^2}\right)^+ g^{n+1}_{i+1,j}$}\\
&\resizebox{.99\hsize}{!}{$- \left(-\frac{\Delta{t}}{h}\frac{v_{i,j-1}}{2} + \Delta{t}\, \frac{D(3\invm_{i,j-1} + \invm_{i,j+1})}{4h^2}\right) g^{n+1}_{i,j-1}
- \left(\frac{\Delta{t}}{h}\frac{v_{i,j+1}}{2} + \Delta{t}\, \frac{D(\invm_{i,j-1} + 3\invm_{i,j+1})}{4h^2}\right) g^{n+1}_{i,j+1}$};\\
&\mbox{if $(x_i,y_j)$ is a cell center,}\\
&\resizebox{.99\hsize}{!}{$\mathcal{A}^{s}(\vec{g}^{\,n+1})_{ij} =
- \left(-\frac{\Delta{t}}{h}\frac{u_{i-1,j}}{2} + \Delta{t}\, \frac{D(3\invm_{i-1,j} + \invm_{i+1,j})}{4h^2}\right) g^{n+1}_{i-1,j}
- \left(\frac{\Delta{t}}{h}\frac{u_{i+1,j}}{2} + \Delta{t}\, \frac{D(\invm_{i-1,j} + 3\invm_{i+1,j})}{4h^2}\right) g^{n+1}_{i+1,j}$} \\
&\resizebox{.99\hsize}{!}{$- \left(-\frac{\Delta{t}}{h}\frac{v_{i,j-1}}{2} + \Delta{t}\, \frac{D(3\invm_{i,j-1} + \invm_{i,j+1})}{4h^2}\right) g^{n+1}_{i,j-1} 
- \left(\frac{\Delta{t}}{h}\frac{v_{i,j+1}}{2} + \Delta{t}\, \frac{D(\invm_{i,j-1} + 3\invm_{i,j+1})}{4h^2}\right) g^{n+1}_{i,j+1}$}.
\end{align*}
Obviously, under the sufficient condition \eqref{eq:2D4order_suff1}, $\vecc{A}^s\leq0$ also holds. The matrix $\vecc{A}_d + \vecc{A}^z$ is a real squared matrix with positive diagonal entries and nonpositive off-diagonals. We use Lemma~\ref{thm:M_matrix1} to verify the first condition in Theorem~\ref{thm:Lorenz_cond}. Let $\vecc{D}$ equal to the identity matrix and applying the same argument as in subsection~\ref{sec:1D4order}. Notice that the row sums of the matrix $\vecc{A}_d + \vecc{A}^z$ are the outputs of $[\mathcal A_d+\mathcal A^z](\vec 1)_{ij}$:
\begin{align*}
&\resizebox{.99\hsize}{!}{$\mbox{If $(x_i,y_j)$ is a knot,}\quad
\invm_{i,j}
+ \Delta{t}\frac{-u_{i-2,j} + 4u_{i-1,j} - 4u_{i+1,j} + u_{i+2,j}}{4h}
+ \Delta{t}\frac{-v_{i,j-2} + 4v_{i,j-1} - 4v_{i,j+1} + v_{i,j+2}}{4h} $};\\
&\resizebox{.99\hsize}{!}{$\mbox{if $(x_i,y_j)$ is an edge (parallel to $y$-axis) center,}\quad
\invm_{i,j}
+ \Delta{t}\, \frac{D(\invm_{i,j-1} + \invm_{i,j+1})}{h^2}
+ \Delta{t}\frac{- u_{i-2,j} + 4u_{i-1,j} - 4u_{i+1,j} + u_{i+2,j}}{4h} $};\\ 
&\resizebox{.99\hsize}{!}{$\mbox{if $(x_i,y_j)$ is an edge (parallel to $x$-axis) center,}\quad
\invm_{i,j}
+ \Delta{t}\, \frac{D(\invm_{i-1,j} + \invm_{i+1,j})}{h^2}
+ \Delta{t}\frac{- v_{i,j-2} + 4v_{i,j-1} - 4v_{i,j+1} + v_{i,j+2}}{4h} $};\\
&\mbox{if $(x_i,y_j)$ is a cell center,}\quad
\invm_{i,j}
+ \Delta{t}\, \frac{D(\invm_{i-1,j}+\invm_{i+1,j}+\invm_{i,j-1}+\invm_{i,j+1})}{h^2}.
\end{align*}
To guarantee the positive row sums, the following constraints on time step size are sufficient:
\begin{subequations}\label{eq:2D4order_suff2}
\begin{align}
&\mbox{For odd $i$ and odd $j$,}\nonumber\\
&\invm_{i,j}
\!+\! \Delta{t}\frac{-u_{i-2,j} + 4u_{i-1,j} - 4u_{i+1,j} + u_{i+2,j}}{4h}
\!+\! \Delta{t}\frac{-v_{i,j-2} + 4v_{i,j-1} - 4v_{i,j+1} + v_{i,j+2}}{4h} \! >\! 0;\\
&\mbox{for odd $i$ and even $j$,}\nonumber\\
&\invm_{i,j}
+ \Delta{t}\, \frac{D(\invm_{i,j-1} + \invm_{i,j+1})}{h^2}
+ \Delta{t}\frac{- u_{i-2,j} + 4u_{i-1,j} - 4u_{i+1,j} + u_{i+2,j}}{4h} > 0;\\
&\mbox{for even $i$ and odd $j$,}\nonumber\\
&\invm_{i,j}
+ \Delta{t}\, \frac{D(\invm_{i-1,j} + \invm_{i+1,j})}{h^2}
+ \Delta{t}\frac{- v_{i,j-2} + 4v_{i,j-1} - 4v_{i,j+1} + v_{i,j+2}}{4h} > 0.
\end{align}
\end{subequations}
Recall that the velocity field $\vec{u}$ is incompressible in Model~1, thus one can preprocess the given $\vec{u}$ such that the velocity point values satisfy the following discrete divergence free constraint:
\begin{subequations}\label{eq:2D4order_divfree}
\begin{align}
&\mbox{For odd $i$ and odd $j$,}\nonumber\\
&\frac{-u_{i-2,j} + 4u_{i-1,j} - 4u_{i+1,j} + u_{i+2,j}}{4h}
+ \frac{-v_{i,j-2} + 4v_{i,j-1} - 4v_{i,j+1} + v_{i,j+2}}{4h} = 0;\\
&\mbox{for odd $i$ and even $j$,}\nonumber\\
&\frac{- u_{i-2,j} + 4u_{i-1,j} - 4u_{i+1,j} + u_{i+2,j}}{4h} + \frac{v_{i,j-1} - v_{i,j+1}}{2h} = 0;\\
&\mbox{for even $i$ and odd $j$,}\nonumber\\
&\frac{u_{i-1,j} - u_{i+1,j}}{2h} + \frac{- v_{i,j-2} + 4v_{i,j-1} - 4v_{i,j+1} + v_{i,j+2}}{4h} = 0;\\
&\mbox{for even $i$ and even $j$,}\nonumber\\
&\frac{u_{i-1,j} - u_{i+1,j}}{2h} + \frac{v_{i,j-1} - v_{i,j+1}}{2h} = 0.
\end{align}
\end{subequations}
Then, for any incompressible velocity satisfying the discrete divergence free constraint \eqref{eq:2D4order_divfree}, we know the \eqref{eq:2D4order_suff2} is satisfied for positive measure $\invm$ under the following sufficient condition:
\begin{subequations}\label{eq:2D4order_suff3}
\begin{align}
h\max\{|v_{i,j-1}|,|v_{i,j+1}|\} &\leq 2D\min\{\invm_{i,j-1},\invm_{i,j+1}\} \quad\text{for odd $i$ and even $j$}\\
h\max\{|u_{i-1,j}|,|u_{i+1,j}|\} &\leq 2D\min\{\invm_{i-1,j},\invm_{i+1,j}\} \quad\text{for even $i$ and odd $j$}.
\end{align}
\end{subequations}
Notice that \eqref{eq:2D4order_suff1} implies \eqref{eq:2D4order_suff3}.
Thus, under the condition \eqref{eq:2D4order_suff2} (in particular, under the condition \eqref{eq:2D4order_suff1} for a discrete divergence free velocity field), the matrix $\vecc{A}_d + \vecc{A}^z$ is a nonsingular M-matrix. 
Meanwhile, under the same sufficient condition, we have $\vecc{A}\vec{1}>0$, which indicates $\mathcal{N}^0(\vecc{A}\vec{1})=\emptyset$, namely, the third condition in Theorem~\ref{thm:Lorenz_cond} is trivially satisfied.

Finally, to verify  $\vecc{A}_a^+\leq \vecc{A}^z\vecc{A}_d^{-1}\vecc{A}^s$ in Theorem~\ref{thm:Lorenz_cond}, we only need to compare the outputs of $\mathcal A_a^+(\vec g^{n+1})$
with $\mathcal A^z\circ (\mathcal A_d)^{-1} \circ \mathcal A^s (\vec g^{n+1})$. 
If $x_{ij}$ is a knot, we only need the following inequalities hold:
\begin{itemize}
\setlength{\itemindent}{-1em}
\item For the entry in $\vecc{A}_a^+$ associated with the coefficient of $g_{i-2,j}^{n+1}$ in $\mathcal{A}_a^+(\vec{g}^{\,n+1})_{ij}$.
{\footnotesize
\begin{align}
&\left(-\frac{\Delta{t}}{h}\frac{u_{i-2,j}}{4} + \Delta{t}\,\frac{D(3\invm_{i-2,j} - 4\invm_{i-1,j} + 3\invm_{i,j})}{8h^2}\right)^+ \times \nonumber \\
&\left(\invm_{i-1,j}
+ \Delta{t}\, \frac{D(\invm_{i-2,j} + \invm_{i,j})}{h^2}
+ \Delta{t}\, \frac{D(\invm_{i-1,j-2} + 4\invm_{i-1,j-1} + 18\invm_{i-1,j} + 4\invm_{i-1,j+1} + \invm_{i-1,j+2})}{8h^2}\right) \nonumber \\
\leq&
\left(-\frac{\Delta{t}}{h}u_{i-1,j} + \Delta{t}\,\frac{D(4\invm_{i-2,j} + 12\invm_{i,j})}{8h^2} - \Big(-\frac{\Delta{t}}{h}\frac{u_{i-2,j}}{4} + \Delta{t}\,\frac{D(3\invm_{i-2,j} - 4\invm_{i-1,j} + 3\invm_{i,j})}{8h^2}\Big)^+ \right) \times \nonumber\\
&\left(-\frac{\Delta{t}}{h}\frac{u_{i-2,j}}{2} + \Delta{t}\, \frac{D(3\invm_{i-2,j} + \invm_{i,j})}{4h^2}\right)\label{eq:1D4order:Lorenz2_1}
\end{align}}
\item For the entry in $\vecc{A}_a^+$ associated with the coefficient of $g_{i+2,j}^{n+1}$ in $\mathcal{A}_a^+(\vec{g}^{\,n+1})_{ij}$.
{\footnotesize
\begin{align}
&\left(\frac{\Delta{t}}{h}\frac{u_{i+2,j}}{4} + \Delta{t}\,\frac{D(3\invm_{i,j} - 4\invm_{i+1,j} + 3\invm_{i+2,j})}{8h^2}\right)^+ \times \nonumber\\
&\left(\invm_{i+1,j} + \Delta{t}\, \frac{D(\invm_{i,j} + \invm_{i+2,j})}{h^2} + \Delta{t}\, \frac{D(\invm_{i+1,j-2} + 4\invm_{i+1,j-1} + 18\invm_{i+1,j} + 4\invm_{i+1,j+1} + \invm_{i+1,j+2})}{8h^2}\right) \nonumber\\
\leq& 
\left(\frac{\Delta{t}}{h}u_{i+1,j} + \Delta{t}\,\frac{D(12\invm_{i,j} + 4\invm_{i+2,j})}{8h^2} - \Big(\frac{\Delta{t}}{h}\frac{u_{i+2,j}}{4} + \Delta{t}\,\frac{D(3\invm_{i,j} - 4\invm_{i+1,j} + 3\invm_{i+2,j})}{8h^2}\Big)^+ \right) \times \nonumber\\
&\left(\frac{\Delta{t}}{h}\frac{u_{i+2,j}}{2} + \Delta{t}\, \frac{D(\invm_{i,j} + 3\invm_{i+2,j})}{4h^2}\right) \label{eq:1D4order:Lorenz2_2}
\end{align}}
\item For the entry in $\vecc{A}_a^+$ associated with the coefficient of $g_{i,j-2}^{n+1}$ in $\mathcal{A}_a^+(\vec{g}^{\,n+1})_{ij}$.
{\footnotesize
\begin{align}
&\left(-\frac{\Delta{t}}{h}\frac{v_{i,j-2}}{4} + \Delta{t}\,\frac{D(3\invm_{i,j-2} - 4\invm_{i,j-1} + 3\invm_{i,j})}{8h^2}\right)^+ \times \nonumber\\
&\left(\invm_{i,j-1} + \Delta{t}\, \frac{D(\invm_{i,j-2} + \invm_{i,j})}{h^2} + \Delta{t}\, \frac{D(\invm_{i-2,j-1} + 4\invm_{i-1,j-1} + 18\invm_{i,j-1} + 4\invm_{i+1,j-1} + \invm_{i+2,j-1})}{8h^2}\right) \nonumber\\
\leq& 
\left(-\frac{\Delta{t}}{h}v_{i,j-1} + \Delta{t}\,\frac{D(4\invm_{i,j-2} + 12\invm_{i,j})}{8h^2} - \Big(-\frac{\Delta{t}}{h}\frac{v_{i,j-2}}{4} + \Delta{t}\,\frac{D(3\invm_{i,j-2} - 4\invm_{i,j-1} + 3\invm_{i,j})}{8h^2}\Big)^+ \right) \times \nonumber\\
&\left(-\frac{\Delta{t}}{h}\frac{v_{i,j-2}}{2} + \Delta{t}\, \frac{D(3\invm_{i,j-2} + \invm_{i,j})}{4h^2}\right)\label{eq:1D4order:Lorenz2_3}
\end{align}}
\item For the entry in $\vecc{A}_a^+$ associated with the coefficient of $g_{i,j+2}^{n+1}$ in $\mathcal{A}_a^+(\vec{g}^{\,n+1})_{ij}$.
{\footnotesize
\begin{align}
&\left(\frac{\Delta{t}}{h}\frac{v_{i,j+2}}{4} + \Delta{t}\,\frac{D(3\invm_{i,j} - 4\invm_{i,j+1} + 3\invm_{i,j+2})}{8h^2}\right)^+ \times \nonumber\\
&\left(\invm_{i,j+1} + \Delta{t}\, \frac{D(\invm_{i,j} + \invm_{i,j+2})}{h^2} + \Delta{t}\, \frac{D(\invm_{i-2,j+1} + 4\invm_{i-1,j+1} + 18\invm_{i,j+1} + 4\invm_{i+1,j+1} + \invm_{i+2,j+1})}{8h^2}\right) \nonumber\\
\leq& 
\left(\frac{\Delta{t}}{h}v_{i,j+1} + \Delta{t}\,\frac{D(12\invm_{i,j} + 4\invm_{i,j+2})}{8h^2} - \Big(\frac{\Delta{t}}{h}\frac{v_{i,j+2}}{4} + \Delta{t}\,\frac{D(3\invm_{i,j} - 4\invm_{i,j+1} + 3\invm_{i,j+2})}{8h^2}\Big)^+ \right) \times \nonumber\\
&\left(\frac{\Delta{t}}{h}\frac{v_{i,j+2}}{2} + \Delta{t}\, \frac{D(\invm_{i,j} + 3\invm_{i,j+2})}{4h^2}\right)\label{eq:1D4order:Lorenz2_4}
\end{align}}
\end{itemize}
The above inequalities hold trivially, if the positive part in each inequalities is zero. For seeking a sufficient condition, we only need to consider the case that the positive parts are larger than zero. Let us use \eqref{eq:1D4order:Lorenz2_1} as an example to derive a sufficient condition. The \eqref{eq:1D4order:Lorenz2_2}-\eqref{eq:1D4order:Lorenz2_4} are processed in the same way. Multiply $64(\frac{h^2}{\Delta{t}})^2$ on both side, after some manipulation, we have:
{\small
\begin{align*}
&\Big(8\frac{h^2}{\Delta{t}}\,\invm_{i-1,j}
+ 8D(\invm_{i-2,j} + \invm_{i,j})
+ D(\invm_{i-1,j-2} + 4\invm_{i-1,j-1} + 18\invm_{i-1,j} + 4\invm_{i-1,j+1} + \invm_{i-1,j+2})\Big) \times \\
&\Big(-2hu_{i-2,j} + D(3\invm_{i-2,j} - 4\invm_{i-1,j} + 3\invm_{i,j})\Big)
\leq
\Big(-4h u_{i-2,j} + 2D(3\invm_{i-2,j} + \invm_{i,j})\Big) \times\\
&\Big(2h u_{i-2,j} - 8h u_{i-1,j}
+ D(\invm_{i-2,j} + 4\invm_{i-1,j} + 9\invm_{i,j})\Big).
\end{align*}
}\noindent
Let $E_\Delta=[i-2,i+2]\times[j-2,j+2]$. Denote the largest and smallest values of the invariant measure on $E_\Delta$ by $b = \max_{E_\Delta}\{\invm_{i,j}\}$ and $s = \min_{E_\Delta}\{\invm_{i,j}\}$. Assume the finite difference grid spacing satisfies:
\begin{align}\label{eq:2D4order_suff4}
h\max_{E_\Delta}|\vec{u}_{i,j}| \leq \frac{1}{20} D\min_{E_\Delta}\{\invm_{i,j}\}.
\end{align}
Note that \eqref{eq:2D4order_suff4} implies the condition \eqref{eq:2D4order_suff1}. Then we only need
\begin{align*}
4\Big(11D + 2\frac{h^2}{\Delta{t}}\Big)b
\Big(\frac{1}{10}Ds + D(6b - 4s)\Big)
\leq (8Ds - \frac{1}{5} Ds)(14Ds - \frac{1}{2} Ds).
\end{align*}
Therefore, a sufficient condition is:
\begin{align}\label{eq:2D4order_suff_Lorenz_org}
11D+\frac{h^2}{\Delta{t}} \leq \frac{52.65D s^2}{b(12b-7.8s)}.
\end{align}
If $x_{ij}$ is an edge center (either parallel to $y$-axis or parallel to $x$-axis), we only need the following inequalities hold:
\begin{itemize}
\setlength{\itemindent}{-1em}
\item For the entry in $\vecc{A}_a^+$ associated with the coefficient of $g_{i-2,j}^{n+1}$ in $\mathcal{A}_a^+(\vec{g}^{\,n+1})_{ij}$.
{\footnotesize
\begin{align}
&\left(-\frac{\Delta{t}}{h}\frac{u_{i-2,j}}{4} + \Delta{t}\,\frac{D(3\invm_{i-2,j} - 4\invm_{i-1,j} + 3\invm_{i,j})}{8h^2}\right)^+ \times \nonumber\\
&\left(\invm_{i-1,j} + \Delta{t}\, \frac{D(\invm_{i-2,j}+\invm_{i,j}+\invm_{i-1,j-1}+\invm_{i-1,j+1})}{h^2}\right) \nonumber\\
\leq& 
\left(-\frac{\Delta{t}}{h}u_{i-1,j} + \Delta{t}\,\frac{D(4\invm_{i-2,j} + 12\invm_{i,j})}{8h^2} - \Big(-\frac{\Delta{t}}{h}\frac{u_{i-2,j}}{4} + \Delta{t}\,\frac{D(3\invm_{i-2,j} - 4\invm_{i-1,j} + 3\invm_{i,j})}{8h^2}\Big)^+\right) \times \nonumber\\
& \left(-\frac{\Delta{t}}{h}\frac{u_{i-2,j}}{2} + \Delta{t}\, \frac{D(3\invm_{i-2,j} + \invm_{i,j})}{4h^2}\right)\label{eq:1D4order:Lorenz2_5}
\end{align}}
\item For the entry in $\vecc{A}_a^+$ associated with the coefficient of $g_{i+2,j}^{n+1}$ in $\mathcal{A}_a^+(\vec{g}^{\,n+1})_{ij}$.
{\footnotesize
\begin{align}
&\left(\frac{\Delta{t}}{h}\frac{u_{i+2,j}}{4} + \Delta{t}\,\frac{D(3\invm_{i+2,j} - 4\invm_{i+1,j} + 3\invm_{i,j})}{8h^2}\right)^+ \times \nonumber\\
&\left(\invm_{i+1,j} + \Delta{t}\, \frac{D(\invm_{i,j}+\invm_{i+2,j}+\invm_{i+1,j-1}+\invm_{i+1,j+1})}{h^2}\right) \nonumber\\
\leq& 
\left(\frac{\Delta{t}}{h}u_{i+1,j} + \Delta{t}\,\frac{D(12\invm_{i,j} + 4\invm_{i+2,j})}{8h^2} - \Big(\frac{\Delta{t}}{h}\frac{u_{i+2,j}}{4} + \Delta{t}\,\frac{D(3\invm_{i,j} - 4\invm_{i+1,j} + 3\invm_{i+2,j})}{8h^2}\Big)^+\right) \times \nonumber\\
&\left(\frac{\Delta{t}}{h}\frac{u_{i+2,j}}{2} + \Delta{t}\, \frac{D(\invm_{i,j} + 3\invm_{i+2,j})}{4h^2}\right)\label{eq:1D4order:Lorenz2_6}
\end{align}}
\item For the entry in $\vecc{A}_a^+$ associated with the coefficient of $g_{i,j-2}^{n+1}$ in $\mathcal{A}_a^+(\vec{g}^{\,n+1})_{ij}$.
{\footnotesize
\begin{align}
&\left(-\frac{\Delta{t}}{h}\frac{v_{i,j-2}}{4} + \Delta{t}\,\frac{D(3\invm_{i,j-2} - 4\invm_{i,j-1} + 3\invm_{i,j})}{8h^2}\right)^+ \times \nonumber\\
&\left(\invm_{i,j-1} + \Delta{t}\, \frac{D(\invm_{i-1,j-1}+\invm_{i+1,j-1}+\invm_{i,j-2}+\invm_{i,j})}{h^2}\right) \nonumber\\
\leq& 
\left(-\frac{\Delta{t}}{h}v_{i,j-1} + \Delta{t}\,\frac{D(4\invm_{i,j-2} + 12\invm_{i,j})}{8h^2} - \Big(-\frac{\Delta{t}}{h}\frac{v_{i,j-2}}{4} + \Delta{t}\,\frac{D(3\invm_{i,j-2} - 4\invm_{i,j-1} + 3\invm_{i,j})}{8h^2}\Big)^+\right) \times \nonumber\\
&\left(-\frac{\Delta{t}}{h}\frac{v_{i,j-2}}{2} + \Delta{t}\, \frac{D(3\invm_{i,j-2} + \invm_{i,j})}{4h^2}\right)\label{eq:1D4order:Lorenz2_7}
\end{align}}
\item For the entry in $\vecc{A}_a^+$ associated with the coefficient of $g_{i,j+2}^{n+1}$ in $\mathcal{A}_a^+(\vec{g}^{\,n+1})_{ij}$.
{\footnotesize
\begin{align}
&\left(\frac{\Delta{t}}{h}\frac{v_{i,j+2}}{4} + \Delta{t}\,\frac{D(3\invm_{i,j} - 4\invm_{i,j+1} + 3\invm_{i,j+2})}{8h^2}\right)^+ \times \nonumber\\
&\left(\invm_{i,j+1} + \Delta{t}\, \frac{D(\invm_{i-1,j+1}+\invm_{i+1,j+1}+\invm_{i,j}+\invm_{i,j+2})}{h^2}\right) \nonumber\\
\leq& 
\left(\frac{\Delta{t}}{h}v_{i,j+1} + \Delta{t}\,\frac{D(12\invm_{i,j} + 4\invm_{i,j+2})}{8h^2} - \Big(\frac{\Delta{t}}{h}\frac{v_{i,j+2}}{4} + \Delta{t}\,\frac{D(3\invm_{i,j} - 4\invm_{i,j+1} + 3\invm_{i,j+2})}{8h^2}\Big)^+\right) \times \nonumber\\
&\left(\frac{\Delta{t}}{h}\frac{v_{i,j+2}}{2} + \Delta{t}\, \frac{D(\invm_{i,j} + 3\invm_{i,j+2})}{4h^2}\right)\label{eq:1D4order:Lorenz2_8}
\end{align}}
\end{itemize}
Again, we only need to consider the case that the positive parts in above are larger than zero.
Let us use \eqref{eq:1D4order:Lorenz2_5} as an example to derive a sufficient condition. The \eqref{eq:1D4order:Lorenz2_6}-\eqref{eq:1D4order:Lorenz2_8} are processed in the same way. Multiply $64(\frac{h^2}{\Delta{t}})^2$ on both side, after some manipulation, we have:
{\small
\begin{align*}
&\Big(4\frac{h^2}{\Delta{t}}\invm_{i-1,j} + 4D(\invm_{i-2,j}+\invm_{i,j}+\invm_{i-1,j-1}+\invm_{i-1,j+1})\Big) \times\\
&\Big(-2h u_{i-2,j} + D(3\invm_{i-2,j} - 4\invm_{i-1,j} + 3\invm_{i,j})\Big)
\leq \Big(-2h u_{i-2,j} + D(3\invm_{i-2,j} + \invm_{i,j})\Big)\times \\
&\Big(2h u_{i-2,j} - 8hu_{i-1,j} + D(\invm_{i-2,j} + 4\invm_{i-1,j} + 9\invm_{i,j})\Big). 
\end{align*}}\noindent
Recall that $E_\Delta=[i-2,i+2]\times[j-2,j+2]$ and $b = \max_{E_\Delta}\{\invm_{i,j}\}$ and $s = \min_{E_\Delta}\{\invm_{i,j}\}$. Assume the finite difference grid spacing satisfies \eqref{eq:2D4order_suff4}. Then  we only need
\begin{align*}
4\Big(8D + 2\frac{h^2}{\Delta{t}}\Big)b
\Big(\frac{1}{10}Ds + D(6b - 4s)\Big)
\leq (8Ds - \frac{1}{5} Ds)(14Ds - \frac{1}{2} Ds).
\end{align*}
Therefore, \eqref{eq:2D4order_suff_Lorenz_org} still serves as a sufficient condition.
%
Now, let us try to simplify above sufficient condition.
The invariant measure $\invm \geq \epsilon_0 > 0$, define $r=b/s$, then \eqref{eq:2D4order_suff_Lorenz_org} can be rewritten as
\begin{align*}
\frac{h^2}{\Delta{t}} 
\leq \frac{52.65D}{r(12r-7.8)} - 11D
= 6.75 D\Big(\frac{1}{r-0.65} - \frac{1}{r}\Big) - 11D.
\end{align*}
From the definition of $r$, we know $r \geq 1$. Thus, it is sufficient to employ the conditions $r\in[1,1.025]$ and
\begin{align*}
\frac{h^2}{\Delta{t}} \leq \sqrt{2}D
< \min_{r\in[1,1.025]} \left\{6.75 D\Big(\frac{1}{r-0.65} - \frac{1}{r}\Big) - 11D\right\}.
\end{align*}
This indicates we only need to find a suitable upper bound on $h$ such that $b \leq 1.025s$ (namely $r\in[1,1.025]$) holds.
Recall $\invm$ is continuously differentiable. 
Assume $\invm$ take its maximum at point $\vec{x}^\ast$ on $E_\Delta$ and $\invm$ take its minimum at point $\vec{x}_\ast$ on $E_\Delta$. By mean value theorem, there exist a point $\vec{\xi} \in E_\Delta$ such that
\begin{align*}
\invm(\vec{x}^\ast) = \invm(\vec{x}_\ast) + (\vec{x}^\ast - \vec{x}_\ast)\cdot\nabla{\invm{(\vec{\xi})}}.
\end{align*}
Therefore
\begin{align*}
b \leq s + 4\sqrt{2}\,h \max_{E_\Delta}|(\invm_x',\invm_y')|,
\end{align*}
which means in order to let $b \leq 1.025s$ hold, we can employ a sufficient condition as follows
\begin{align}
s + 4\sqrt{2}\,h \max_{E_\Delta}|(\invm_x',\invm_y')| \leq 1.025s.
\end{align}
To this end, we obtain a constraint on $h$, as follows
\begin{align}\label{eq:2D4order_suff5}
h \max_{E_\Delta}|(\invm_x',\invm_y')| \leq \frac{\sqrt{2}}{320}\min_{E_\Delta}\{\invm_{i,j}\}.
\end{align}
As a summary, we have the following theorem:
\begin{thm}\label{thm:2D4order_suff_conds}
Under the mesh and time step constraints \eqref{eq:2D4order_suff2}, \eqref{eq:2D4order_suff4}, \eqref{eq:2D4order_suff5} and $\frac{\Delta t}{h^2}\geq \frac{1}{\sqrt2 D}$, the coefficient matrix for the unknown vector $\vec{g}$ in the fourth order finite difference scheme \eqref{2d-4th-scheme} satisfies the Lorenz's conditions, so it is a product of two M-matrices thus  monotone.
In particular, let $E_\Delta=[i-2,i+2]\times[j-2,j+2]$, with a two dimensional discrete divergence free velocity field satisfying \eqref{eq:2D4order_divfree},  then the matrix in fourth order finite difference scheme \eqref{2d-4th-scheme} is monotone under the following constraints:
\begin{align*}
&h\max_{E_\Delta}|\vec{u}_{i,j}| \leq \frac{1}{20} D\min_{E_\Delta}\{\invm_{i,j}\},\\
&h \max_{E_\Delta}|(\invm_x',\invm_y')| \leq \frac{\sqrt{2}}{320}\min_{E_\Delta}\{\invm_{i,j}\},\\
&\frac{\Delta t}{h^2}\geq \frac{1}{\sqrt2 D}. 
\end{align*}
\end{thm}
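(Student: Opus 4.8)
The plan is to apply Lorenz's condition (Theorem~\ref{thm:Lorenz_cond}) to the scheme matrix $\vecc{A}$, using the explicit splitting $\vecc{A}=\vecc{A}_d+\vecc{A}_a^+ +\vecc{A}^z+\vecc{A}^s$ already assembled above for the three grid-point types (knot, edge center, cell center). Showing that $\vecc{A}$ is a product of two nonsingular M-matrices, hence $\vecc{A}^{-1}\geq 0$, reduces to verifying the three bullet conditions of Theorem~\ref{thm:Lorenz_cond}, and I would organize the argument exactly as in the one-dimensional Theorem~\ref{thm:1D4order_suff_conds}, carrying the extra two-dimensional bookkeeping case by case.

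First I would confirm that the splitting is admissible, i.e. $\vecc{A}^z\leq 0$ and $\vecc{A}^s\leq 0$. Each off-diagonal entry combines a diffusion contribution with a definite sign once $\invm>0$ and a convection contribution proportional to $u$ or $v$; the mesh constraint \eqref{eq:2D4order_suff1} forces the convection term to be dominated by the diffusion term, giving nonpositivity, and since \eqref{eq:2D4order_suff4} implies \eqref{eq:2D4order_suff1} this is covered by the stated hypotheses. Next I would establish the first Lorenz condition, that $\vecc{A}_d+\vecc{A}^z$ is a nonsingular M-matrix, via Theorem~\ref{thm:M_matrix1} with $\vecc{D}=I$: the diagonals are positive and the off-diagonals nonpositive, so it remains only to show that the row sums $[\mathcal{A}_d+\mathcal{A}^z](\vec{1})_{ij}$, listed above for all four point types, are positive. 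This is precisely the time-step constraint \eqref{eq:2D4order_suff2}, which becomes automatic for an incompressible field obeying the discrete divergence-free identities \eqref{eq:2D4order_divfree} together with \eqref{eq:2D4order_suff3} (itself implied by \eqref{eq:2D4order_suff1}). The same positivity gives $\vecc{A}\vec{1}>0$, so $\mathcal{N}^0(\vecc{A}\vec{1})=\emptyset$ and the third Lorenz condition (connectivity) holds trivially with $\vec{e}=\vec{1}$.

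The heart of the argument, and the step I expect to be the main obstacle, is the second Lorenz condition $\vecc{A}_a^+\leq \vecc{A}^z\vecc{A}_d^{-1}\vecc{A}^s$. The positive off-diagonal entries live only at knots and edge centers, arising from the high-order second-difference stencil weights of the form $3\invm_{i-2}-4\invm_{i-1}+3\invm_i$. For each such entry I would trace the single two-step path through the intervening node contributing to $\vecc{A}^z\vecc{A}_d^{-1}\vecc{A}^s$, which produces the entrywise inequalities \eqref{eq:1D4order:Lorenz2_1}--\eqref{eq:1D4order:Lorenz2_4} at knots and \eqref{eq:1D4order:Lorenz2_5}--\eqref{eq:1D4order:Lorenz2_8} at edge centers. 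After clearing denominators (multiplying by $64(h^2/\Delta{t})^2$) and bounding every nearby value of $\invm$ between $s=\min_{E_\Delta}\invm$ and $b=\max_{E_\Delta}\invm$ on the extended stencil $E_\Delta=[i-2,i+2]\times[j-2,j+2]$, combined with the convection bound \eqref{eq:2D4order_suff4}, each inequality collapses to the single sufficient condition \eqref{eq:2D4order_suff_Lorenz_org}. The genuine subtlety is that the intermediate-node diagonal in $\vecc{A}_d^{-1}$ differs between the knot case (full high-order diagonal in both directions) and the edge-center case (mixed high/low order); these two families must be checked separately, but reassuringly both reduce to the same bound \eqref{eq:2D4order_suff_Lorenz_org}.

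Finally I would convert \eqref{eq:2D4order_suff_Lorenz_org} into the clean hypotheses. Setting $r=b/s\geq 1$ rewrites it as $h^2/\Delta{t}\leq 6.75D\big(1/(r-0.65)-1/r\big)-11D$; restricting to $r\in[1,1.025]$ makes the right-hand side exceed $\sqrt{2}\,D$, which yields the time-step condition $\Delta{t}/h^2\geq 1/(\sqrt{2}\,D)$. The range $r\leq 1.025$ is in turn guaranteed by the mean value theorem estimate $b\leq s+4\sqrt{2}\,h\max_{E_\Delta}|\nabla\invm|$ together with the spatial constraint \eqref{eq:2D4order_suff5}. Collecting \eqref{eq:2D4order_suff2}, \eqref{eq:2D4order_suff4}, \eqref{eq:2D4order_suff5} and $\Delta{t}/h^2\geq 1/(\sqrt{2}\,D)$ then verifies all three Lorenz conditions, so $\vecc{A}$ is a product of two nonsingular M-matrices and hence monotone.
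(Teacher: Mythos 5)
Your proposal is correct and follows essentially the same route as the paper: the same splitting $\vecc{A}=\vecc{A}_d+\vecc{A}_a^+ +\vecc{A}^z+\vecc{A}^s$, verification of Lorenz's three conditions via Theorem~\ref{thm:M_matrix1} with $\vecc{D}=I$ and the row-sum/divergence-free argument, entrywise reduction of $\vecc{A}_a^+\leq \vecc{A}^z\vecc{A}_d^{-1}\vecc{A}^s$ (treating knots and edge centers separately) to \eqref{eq:2D4order_suff_Lorenz_org}, and the final simplification via $r=b/s\in[1,1.025]$ and the mean value theorem. No gaps to report.
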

\begin{rem}
 We emphasize that the conditions above are only convenient sufficient conditions for monotonicity, rather than sharp necessary conditions. However, the monotonicity in the fourth order finite difference scheme \eqref{2d-4th-scheme} will be lost in numerical tests if $\Delta t$ approaches $0$. So certain lower bound on $\frac{\Delta t}{h^2}$ is a necessary condition for monotonicity. 
\end{rem}

\section{Properties of the  fully discrete numerical schemes}\label{sec:positivity_and_dissipation}
We only discuss the two dimensional case since all the results can be easily reduced to the one dimensional case. 
The discussion in this section  holds for both the second order scheme \eqref{2d-2nd-scheme}  and the fourth order scheme \eqref{2d-4th-scheme}. 
For convenience, we use $\vec{g}_h$  to denote the numerical solution vector in two dimensions with entries $g_{i}$ ($i=1,\cdots, N^2$).
The finite element space $V^h$ is $N^2$-dimensional with Lagrangian $Q^k$ basis $\{\phi_i(\mathbf x)\}_{i=1}^{N^2}$ defined at the $(k+1)\times(k+1)$-point Gauss-Lobatto points.

\subsection{Natural properties of the finite element method}
Since the finite difference schemes in Section \ref{sec:numerical_scheme} are derivied from a finite element method, they inherit many good properties from
the original finite element method, which will be used for deriving energy dissipation. 
We can express the numerical scheme \eqref{eq:Model1_quadform}  in a matrix-vector form. We introduce the following matrices:
\begin{align*}
[\vecc{W}]_{i,j} &= \langle\phi_j, \phi_i\rangle, &
[\vecc{M}]_{i,j} &= \mathrm{diag}(\invm_{1}, \cdots, \invm_{N^2}), \\
[\vecc{A}^\mathrm{diff}]_{i,j} &= \langle D\invm \nabla{\phi_j}, \nabla{\phi_i}\rangle, &
[\vecc{A}^\mathrm{adv}]_{i,j} &= \langle u \phi_j, \nabla{\phi_i}\rangle.
\end{align*}
Since we use the Gauss-Lobatto quadrature, the lumped mass matrix $\vecc{W}$ is a diagonal matrix, with quadrature weights $\omega_i>0$ on the diagonal. 
Then the matrix-vector form of \eqref{eq:Model1_quadform} is
\begin{align}\label{eq:matrix_form_scheme}
(\vecc{W}\vecc{M} + \Delta{t}\vecc{A}^\mathrm{diff} + \Delta{t}\vecc{A}^\mathrm{adv})\vec{g}^{\,n+1}_h 
= \vecc{W}\vecc{M}\vec{g}^{\,n}_h.
\end{align} 
For simplicity, we define 
$$\vecc{A}:=\vecc{W}\vecc{M} + \Delta{t}\vecc{A}^\mathrm{diff} + \Delta{t}\vecc{A}^\mathrm{adv},$$
$$ \vecc{B}:=\vecc{M}^{-1} \vecc{W}^{-1} \vecc{A}=\vecc{I}+ \Delta{t}\vecc{M}^{-1} \vecc{W}^{-1}\vecc{A}^\mathrm{diff} + \Delta{t}\vecc{M}^{-1} \vecc{W}^{-1}\vecc{A}^\mathrm{adv}.$$
Thus $\vec{g}^{\,n+1}_h= \vecc{A}^{-1}\vecc{W}\vecc{M}\vec{g}^{\,n}_h=\vecc{B}^{-1}\vec{g}^{\,n}_h$.

Consider an arbitrary test function $\phi_h\in V^h$ with point values $\phi_i$ $(i=1,\cdots N^2)$. Let $\vec\phi_h$ be the vector with entries $\phi_i$. Then the scheme  \eqref{eq:Model1_quadform} is equivalent to the following matrix-vector form:
\[ \vec{\phi}_h^{\,T} \vecc{A} \vec{g}^{n+1}_h=\vec{\phi}_h^{\,T} \vecc{W}\vecc{M} \vec{g}^{n}_h,\quad \forall \vec{\phi}_h\in \mathbb  R^{N^2}.\]

By considering the test function $\phi_h\equiv 1$, we get 
$$\forall g_h\in V^h,\quad \langle u g_h , \nabla 1 \rangle=0\Longrightarrow \vec 1^{\,T} \vecc{A}^\mathrm{adv}\vec {g}_h =0,  \forall\vec {g}_h\in \mathbb  R^{N^2} \,\Longrightarrow  \vec 1^{\,T} \vecc{A}^\mathrm{adv}=\vec 0, $$
$$\forall  g_h\in V^h,\quad \langle D\mathcal M \nabla g_h , \nabla 1 \rangle=0\Longrightarrow \vec 1^{\,T} \vecc{A}^\mathrm{diff}\vec {g}_h =0,  \forall\vec {g}_h\in \mathbb  R^{N^2} \,\Longrightarrow  \vec 1^{\,T} \vecc{A}^\mathrm{diff}=\vec 0. $$

Thus we have
\begin{equation}
 \label{scheme-property} \vec 1^{\,T} \vecc{A}=\vec 1^{\,T} (\vecc{W}\vecc{M} + \Delta{t}\vecc{A}^\mathrm{diff} + \Delta{t}\vecc{A}^\mathrm{adv})=\vec 1^{\,T} \vecc{W}\vecc{M}.  \end{equation}

 The next natural property of the finite element method \eqref{eq:Model1_quadform} is $\vecc{M}^{-1}\vecc{W}^{-1}\vecc{A}\vec{1}=\vec{1}$, under the assumption that the velocity field satisfies the following discrete divergence free constraint:
 \begin{equation}
 \forall \phi_h\in V^h,\quad \langle u , \nabla \phi_h \rangle=0.
 \label{general-div-free}
 \end{equation} 
 It is straightforward to verify that \eqref{general-div-free} is equivalent to \eqref{2d-disccrete_div_free} in the second order scheme, and equivalent to \eqref{eq:2D4order_divfree} in the fourth order scheme. 
 
 Notice that we first have
 \[ g_h\equiv 1 \Longrightarrow \langle D\mathcal M \nabla g_h, \nabla \phi_h\rangle=0,  \forall \phi_h\in V^h  \Longrightarrow  \vec\phi_h^{\, T}\vecc{A}^\mathrm{diff} \vec 1=0,  \forall\vec {\phi}_h\in \mathbb  R^{N^2}\Longrightarrow  \vecc{A}^\mathrm{diff} \vec 1=\vec 0. \]
 With the discrete divergence free condition \eqref{general-div-free}, we have
 \[ g_h\equiv 1 \Longrightarrow \langle u  g_h, \nabla \phi_h\rangle=0,  \forall \phi_h\in V^h  \Longrightarrow  \vec\phi_h^{\, T}\vecc{A}^\mathrm{adv} \vec 1=0,  \forall\vec {\phi}_h\in \mathbb  R^{N^2}\Longrightarrow  \vecc{A}^\mathrm{adv} \vec 1=\vec 0. \]
 Therefore
\begin{equation}\label{groundstate}
\vecc{M}^{-1}\vecc{W}^{-1}\vecc{A}\vec{1} = (\vecc{I}+ \Delta{t}\vecc{M}^{-1} \vecc{W}^{-1}\vecc{A}^\mathrm{diff} + \Delta{t}\vecc{M}^{-1} \vecc{W}^{-1}\vecc{A}^\mathrm{adv})\vec{1} 
=\vec{1}.
\end{equation}

\subsection{Mass conservation}

By plugging in the test function $\phi_h\equiv 1$ in \eqref{eq:Model1_quadform}, we get 
$\langle \mathcal M g_h^{n+1}, 1\rangle=\langle \mathcal M g_h^{n}, 1\rangle$, thus
$$\langle  \rho_h^{n+1}, 1\rangle=\langle \mathbf \rho_h^{n}, 1\rangle,$$
which can also be written as
\begin{align}\label{eq:discrete_mass_conv}
\sum_{i=1}^{N^2}\omega_i \rho_i^{n+1} = \sum_{i=1}^{N^2}\omega_i \rho_i^{n}.
\end{align} 

\subsection{Steady state preserving}
 
If $\vec{g}_h^{\,n} = K\vec{1}$ for some constant $K$, then multiply $\vecc{M}^{-1}\vecc{W}^{-1}$ on both side of \eqref{eq:matrix_form_scheme}, we have 
\begin{equation}\label{scheme_f}
\vecc{M}^{-1}\vecc{W}^{-1}\vecc{A}\vec{g}_h^{\,n+1}=\vec{g}_h^{\,n}. 
\end{equation}
It is a well known fact that  the stiffness matrix $\vecc{A}$ in the finite element method \eqref{eq:Model1_quadform} is nonsingular, which is implied by the coercivity of the bilinear form in \eqref{eq:Model1_varform} for an incompressible velocity field. 
When the linear system above is nonsingular, it is straightforward to verify that the  unique solution is $\vec{g}_h^{\,n+1} = K\vec{1}$. Therefore, in terms of density, we have $\rho^n_i = K\invm_i,\,\forall i$ implies $\rho^{n+1}_i = K\invm_i,\,\forall i$.

\subsection{Positivity} 
 
At time step $n$, assume $\rho^n_i > 0$ for every $i$, then $g^n_i = \rho^n_i / \invm_i \geq 0$ for every $i$, since invariant measure is positive.  
If all suitable mesh and time step constraints hold so that all the monotonicity results in Section \ref{sec:monotonicity} hold,  
then $\vecc{A}^{-1} \geq 0$ holds. Since $\mathcal M_i>0$ and $\omega_i>0$, we have 
$$\vec{g}^{\,n+1}_h= \vecc{A}^{-1}\vecc{W}\vecc{M}\vec{g}^{\,n}_h\geq 0$$
thus $\rho^{n+1}_i = \invm_i g^{n+1}_i 
\geq 0$. 

\subsection{Energy dissipation}

For any convex function $f(x)$, define the discrete energy at time step $n$ as
\begin{align*}
E(\rho^n_h) = \langle \invm f(\frac{\rho^n_h}{\invm}), 1 \rangle
= \sum_{i=1}^{N^2} \omega_i \invm_i f\left (\frac{\rho^{n}_i}{\invm_i}\right )= \langle \invm f(g_h^n), 1 \rangle= \sum_{i=1}^{N^2} \omega_i \invm_i f(g^n_i).
\end{align*}

\begin{thm}
Assume the velocity field satisfies the discrete divergence free constraint \eqref{general-div-free}, e.g,  \eqref{2d-disccrete_div_free} in the second order scheme, and \eqref{eq:2D4order_divfree} in the fourth order scheme. 
If the scheme \eqref{eq:Model1_quadform} is monotone, then for any convec function $f(x)$, it dissipates the discrete energy:
\[\sum_{i=1}^{N^2} \omega_i \invm_i f\left (\frac{\rho^{n+1}_i}{\invm_i}\right )\leq\sum_{i=1}^{N^2} \omega_i \invm_i f\left (\frac{\rho^{n}_i}{\invm_i}\right ). \]
\end{thm}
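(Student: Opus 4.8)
The plan is to exploit the fact that, under the monotonicity hypothesis, the one-step update is a convex averaging, so that applying Jensen's inequality pointwise and then summing against the quadrature weights yields the decay. Recall from \eqref{eq:matrix_form_scheme} that $\vec{g}^{\,n+1}_h = \vecc{B}^{-1}\vec{g}^{\,n}_h$ with $\vecc{B} = \vecc{M}^{-1}\vecc{W}^{-1}\vecc{A}$, so that $\vecc{B}^{-1} = \vecc{A}^{-1}\vecc{W}\vecc{M}$. First I would observe that when the scheme is monotone, $\vecc{A}^{-1}\geq 0$, and since $\vecc{M}$ and $\vecc{W}$ are positive diagonal matrices, the matrix $\vecc{B}^{-1}$ is entrywise nonnegative. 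Combined with the steady-state identity \eqref{groundstate}, namely $\vecc{B}\vec{1}=\vec{1}$ and hence $\vecc{B}^{-1}\vec{1}=\vec{1}$, this shows that every row of $\vecc{B}^{-1}$ is nonnegative and sums to one. Thus $\vecc{B}^{-1}$ is row-stochastic and each component $g^{n+1}_i = \sum_j (\vecc{B}^{-1})_{ij}\,g^n_j$ is a convex combination of the old nodal values $\{g^n_j\}$.

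Second, I would apply Jensen's inequality to the convex function $f$: for each $i$, $f(g^{n+1}_i) \leq \sum_j (\vecc{B}^{-1})_{ij}\, f(g^n_j)$. Multiplying by the positive weight $\omega_i\invm_i$ and summing over $i$ gives $E(\rho^{n+1}_h) \leq \sum_j \big(\sum_i \omega_i\invm_i (\vecc{B}^{-1})_{ij}\big) f(g^n_j)$. To close the estimate it remains to show that the weight vector $\vec{w}$ with entries $w_i = \omega_i\invm_i$ is a left fixed vector of $\vecc{B}^{-1}$, i.e. $\vec{w}^{\,T}\vecc{B}^{-1} = \vec{w}^{\,T}$; then the inner sum collapses to $\omega_j\invm_j$ and the right-hand side is exactly $E(\rho^n_h)$, finishing the proof.

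Third, I would verify this left-invariance directly from mass conservation. Since $\vec{w}^{\,T} = \vec{1}^{\,T}\vecc{W}\vecc{M}$ and $\vecc{B} = \vecc{M}^{-1}\vecc{W}^{-1}\vecc{A}$, one computes $\vec{w}^{\,T}\vecc{B} = \vec{1}^{\,T}\vecc{W}\vecc{M}\,\vecc{M}^{-1}\vecc{W}^{-1}\vecc{A} = \vec{1}^{\,T}\vecc{A}$, which by the mass-conservation identity \eqref{scheme-property}, $\vec{1}^{\,T}\vecc{A} = \vec{1}^{\,T}\vecc{W}\vecc{M} = \vec{w}^{\,T}$, equals $\vec{w}^{\,T}$. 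Multiplying by $\vecc{B}^{-1}$ on the right then yields $\vec{w}^{\,T}\vecc{B}^{-1} = \vec{w}^{\,T}$, as required.

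The genuinely essential ingredients are the two structural identities supplied by the finite element framework: row-stochasticity of $\vecc{B}^{-1}$ (monotonicity together with the steady-state preservation \eqref{groundstate}), which is what makes Jensen applicable, and left-invariance of the weight vector (mass conservation \eqref{scheme-property}), which restores the correct quadrature weights after summation. I do not anticipate a serious obstacle once monotonicity is assumed, since the argument is a clean combination of Jensen's inequality with these two linear-algebra facts and no further mesh or time-step constraints enter beyond those guaranteeing $\vecc{A}^{-1}\geq 0$. The only point requiring care is the bookkeeping that distinguishes the right fixed vector $\vec 1$ of $\vecc{B}^{-1}$ from the left fixed vector $\vec w$: these are two separate identities, and both are needed to convert the pointwise Jensen bound into the weighted energy inequality.
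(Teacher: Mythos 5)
Your proof is correct and follows essentially the same route as the paper: both arguments combine monotonicity with the identity \eqref{groundstate} to show the update is a convex combination (your row-stochasticity of $\vecc{B}^{-1}$ is exactly the paper's $\sum_j a^{ij}\omega_j\invm_j = 1$), apply Jensen's inequality, and then use mass conservation \eqref{scheme-property} to show the weights $\omega_i\invm_i$ form a left fixed vector (the paper's $\sum_i a^{ij}\omega_i\invm_i = 1$). The only difference is that you phrase the argument in matrix form via $\vecc{B}^{-1}=\vecc{A}^{-1}\vecc{W}\vecc{M}$ while the paper works entrywise, which is purely notational.
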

\begin{proof}
 Let $a^{ij}$ be the entries of  $\vecc{A}^{-1}$. Then $\vec{g}^{\,n+1}_h= \vecc{A}^{-1}\vecc{W}\vecc{M}\vec{g}^{\,n}_h$ gives
\begin{equation}
g^{n+1}_i=\sum\limits_{j} a^{ij}\omega_j \mathcal M_j g^n_j. 
\label{proof-convexcomb}
\end{equation}
Next we show that \eqref{proof-convexcomb} is a convex combination due to monotonicity and natural properties of the finite element method. 
The monotonicity implies $a^{ij}\geq 0$. The   property $\vecc{M}^{-1}\vecc{W}^{-1}\vecc{A}\vec 1=\vec 1$ gives 
$$\vec 1  =\vecc{A} ^{-1} \vecc{W}\vecc{M} \vec 1 \Longrightarrow  \sum\limits_{j} a^{ij}\omega_j \mathcal M_j=1. $$
Thus \eqref{proof-convexcomb} is a convex combination. 
For a convex function $f$,  Jensen's inequality gives
\[ f(g^{n+1}_i)\leq \sum\limits_{j} a^{ij}\omega_j \mathcal M_j f(g^n_j). \]

On the other hand, the property \eqref{scheme-property} implies  
  \[ \vec 1^{\,T} \vecc{A}=\vec 1^{\,T} \vecc{W}\vecc{M} \Longrightarrow\vec 1^{\,T}=\vec 1^{\,T} \vecc{W}\vecc{M} \vecc{A}^{-1}  \Longrightarrow \sum\limits_{i} a^{ij}\omega_i \mathcal M_i=1 \]
So we have
\begin{align}\label{eq:discrete_engy_law}
E^{n+1} &= \sum_{i}\omega_i \invm_i f(\frac{\rho_i^{n+1}}{\invm_i}) =
\sum_{i}\omega_i \invm_i f(g_i^{n+1}) \\
&\leq \sum_{i}\omega_i \invm_i \left [\sum\limits_{j} a^{ij}\omega_j \mathcal M_j f(g^n_j)\right] 
= \sum_{j}\omega_j \invm_j \left [\sum\limits_{i} a^{ij}\omega_i \mathcal M_i \right] f(g^n_j)
= \sum_{j}\omega_j \invm_j f(g^n_j) = E^{n}.\nonumber
\end{align}

\end{proof}

\begin{rem}
As a special case, by choosing the convex function $f(x)=(x-1)^2$ and using the discrete mass conservation \eqref{eq:discrete_mass_conv}, the discrete energy dissipation law \eqref{eq:discrete_engy_law} reduces to the following form
\begin{align}
\sum_{i}\omega_i \left(\frac{\rho_i^{n+1}}{\invm_i}-1 \right)^2 \invm_i
< \sum_{i}\omega_i \left(\frac{\rho_i^{n}}{\invm_i}-1 \right)^2 \invm_i,
\end{align}
which is viewed as a discrete energy dissipation law w.r.t the Pearson $\chi^2$-divergence.
\end{rem}

\section{Numerical Tests}
\label{sec-test}
\subsection{Accuracy test}

We consider the scheme \eqref{eq:Model1_quadform}
solving 
\begin{equation}\label{test_f}
 \rho_t=\nabla \cdot \bbs{ \invm \nabla \frac{\rho}{\invm}} + \vec{u} \cdot \nabla \frac{\rho}{\invm} + f 
\end{equation}
on $\Omega=(0,\pi)\times(0,\pi)$ with no flux boundary condition, i.e., $\nabla \frac{\rho}{\mathcal M}\cdot \vec n=0.$
We test the second order and fourth order spatial accuracy on a steady state solution 
\begin{equation}\label{exact_sol}
\rho(x,y,t)=(3\cos x \cos y+3)(2+\sin x\sin y),
\end{equation}
with $\mathcal M=2+\sin x\sin y$,  $\vec u=\langle \sin x\cos y, \cos x\sin y\rangle $. The source $f(x,y)$ is chosen such that $\rho(x,y,t)$ above is the exact solution to \eqref{test_f}. 

The time step is set as $\Delta t=\Delta x$ and errors at $T=1$ are given 
in Table \ref{table-accuracy} where $l^2$ error is defined as 
$$\sqrt{\Delta x \Delta y\sum_i \sum_j |u_{ij}-u(x_i, y_j)|^2}$$ with $u_{ij}$ and $u(x,y)$ denoting the numerical and exact solutions, respectively. 
We observe the expected order of spatial accuracy.

\begin{table} [htbp]
    \resizebox{\textwidth}{!}{%
\begin{tabular}{|c|c|c|c|c|c|c|c|c|}
\hline
\multirow{2}{*}{FD Grid} & 
\multicolumn{4}{c|}{the second order scheme  \eqref{2d-2nd-scheme}}  &
\multicolumn{4}{c|}{the fourth order scheme  \eqref{2d-4th-scheme}} \\ 
\cline{2-9}
 & 
 $l^2$ error & order & $l^\infty$ error & order  &
$l^2$ error & order & $l^\infty$ error & order  \\
\hline 
$9\times 9$  & 
    2.99E-1     & --- &     2.93E-1     & --- & 
    1.66E-2     & --- &     1.17E-2     & --- 
 \\
\hline
$17\times 17$ & 
    6.00E-2     &     2.32     &     8.38E-2     &     1.81     & 
    9.98E-4     &     4.05     &     8.15E-4     &     3.84    
 \\
\hline
$33\times 33$ &
    1.21E-2     &     2.31     &     2.21E-2     &     1.92     & 
    6.14E-5     &     4.02    &     5.31E-5     &     3.94    
 \\
\hline
$65\times 65$ &
    2.59E-3     &     2.23     &     5.67E-3     &     1.96    & 
    3.81E-6     &     4.01     &     3.31E-6     &     4.00     
 \\ \hline
 $129\times 129$ &
    5.85E-4     &     2.15     &     1.44E-3     &     1.98     & 
    2.37E-7     &     4.01     &     2.07E-7     &     4.00     
 \\ \hline
\end{tabular}}
\label{table-accuracy}
\caption{Accuracy test for a steady state solution \eqref{exact_sol} to the Fokker-Planck equation \eqref{test_f}  with a source term.}
\end{table}

\subsection{Numerical examples with a given sampling target $\mathcal M$}
 
We consider examples with a given sampling target $\pi$.
On a 2D domain $\Omega=[-4.5,4.5]\times[-4.5,4.5]$,
 the   stream function for a 2D sinusoidal cellular flow is given as
\begin{equation}\label{psi}
\psi(x,y):= A\sin  (k\pi x) \sin  (k\pi y),
\end{equation} 
where $A$ represents the amplitude of the mixture velocity $\vec{u}$ and $k$ is the normalized wave number  of the mixture.
Then the incompressible velocity field is given as
$
\vec{u} = \bbs{\begin{array}{cc}
-\pt_y \psi\\
\pt_x \psi
\end{array}
} $.

The target density is taken to be  a smiling triple-banana image:
\begin{align}\label{smile}
\mathcal M(x,y) &= e^{ -20\big[\bbs{x-\frac65}^2+\bbs{y-\frac{6}{5}}^2 - \frac12\big]^2  + \log\bbs{e^{-10(y-2)^2}} } +  e^{-20 \big[\bbs{x+\frac65}^2+\bbs{y-\frac{6}{5}}^2 - \frac12\big]^2 + \log\bbs{e^{-10(y-2)^2}}} \nonumber\\
 &+ e^{-20 \bbs{x^2+y^2 - 2}^2 + \log\bbs{e^{-10(y+1)^2}}}+0.1.
\end{align}
Then we take a  Gaussian mixture as the initial density
\begin{equation}\label{smile-initial}
\rho^0(x,y)= e^{-16(x+3)^2-4y^2}+ e^{-16(x-3)^2-4y^2}+ e^{-4x^2-16(y+3)^2}+ e^{-4x^2-16(y-3)^2}+0.1.
\end{equation} 

The numerical solutions for both second order and fourth order schemes, as well as the energy evolution for $E=\int_\Omega \frac{\rho^2}{\mathcal M} d\mathbf x$,  are given in Figure \ref{FP-figure}. From the color contour, no visual difference can be observed.  The positivity-preserving and energy decay can be proved for both schemes.

      \begin{figure}[!htbp]
      \begin{center}
 \subfigure{\includegraphics[scale=0.35]{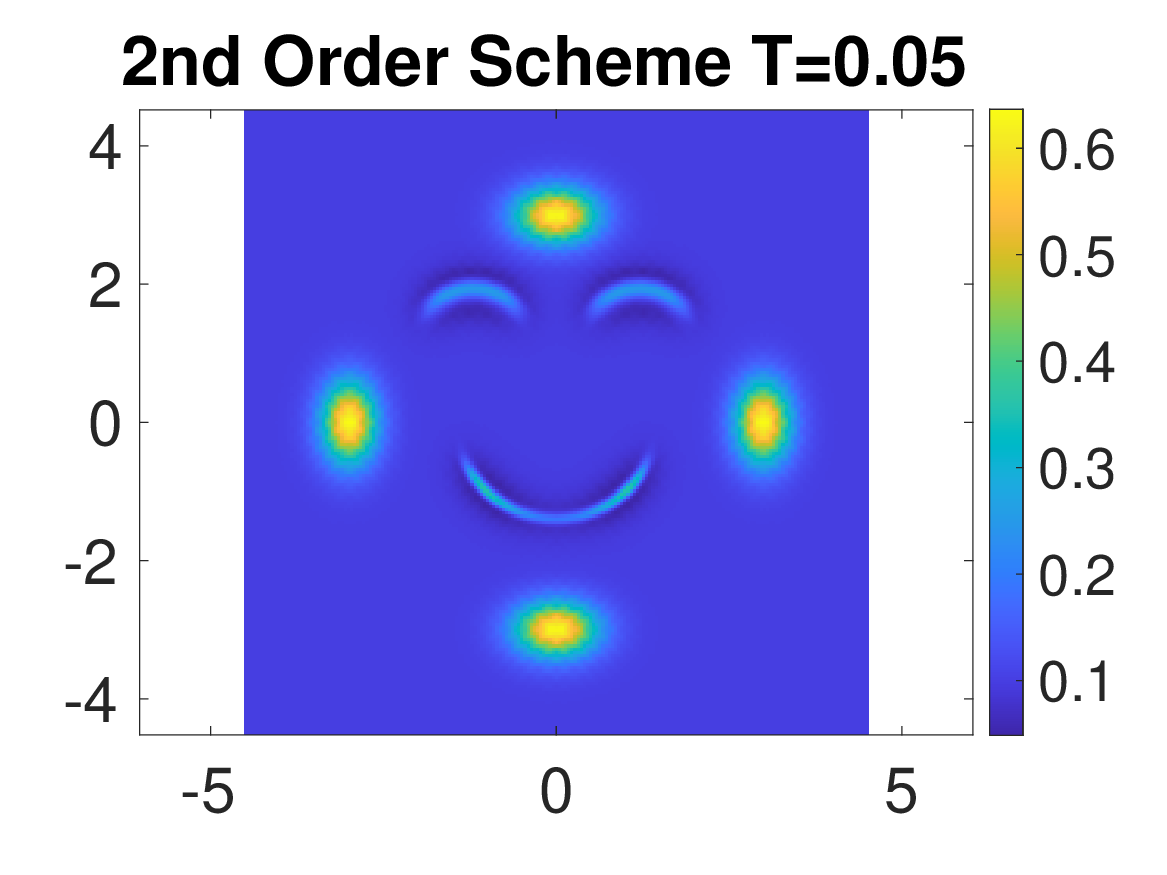} }
 \hspace{-.1in}
 \subfigure{\includegraphics[scale=0.35]{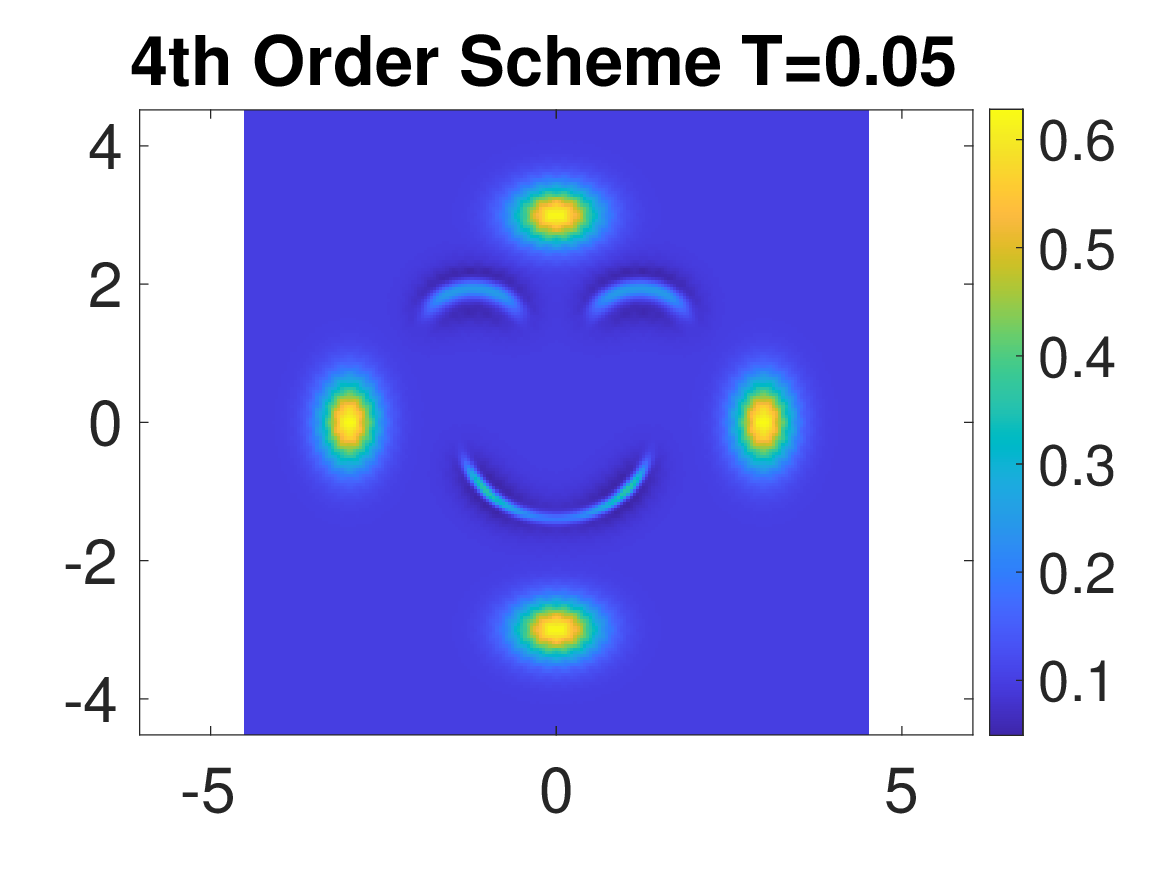}}\\\vspace{-0.1in}
 \subfigure{\includegraphics[scale=0.35]{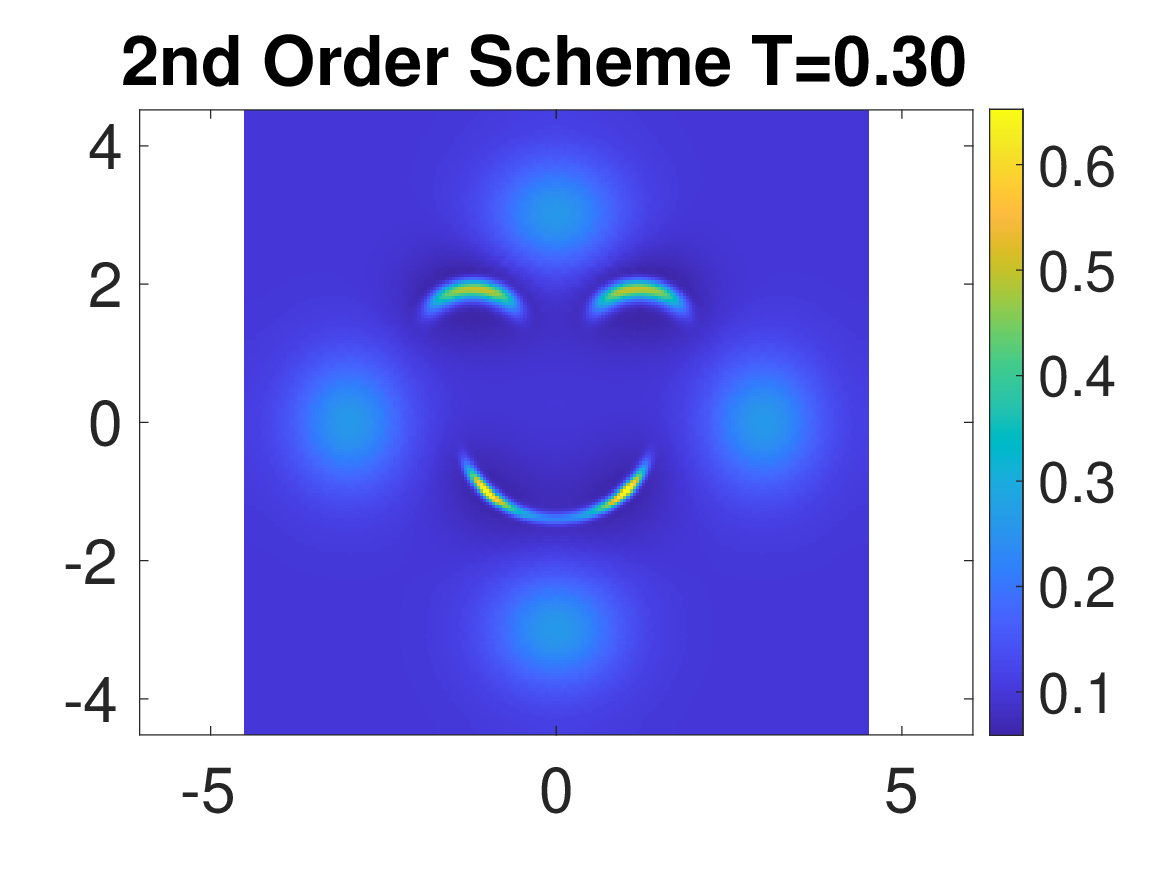} }
 \hspace{-.1in}
 \subfigure{\includegraphics[scale=0.35]{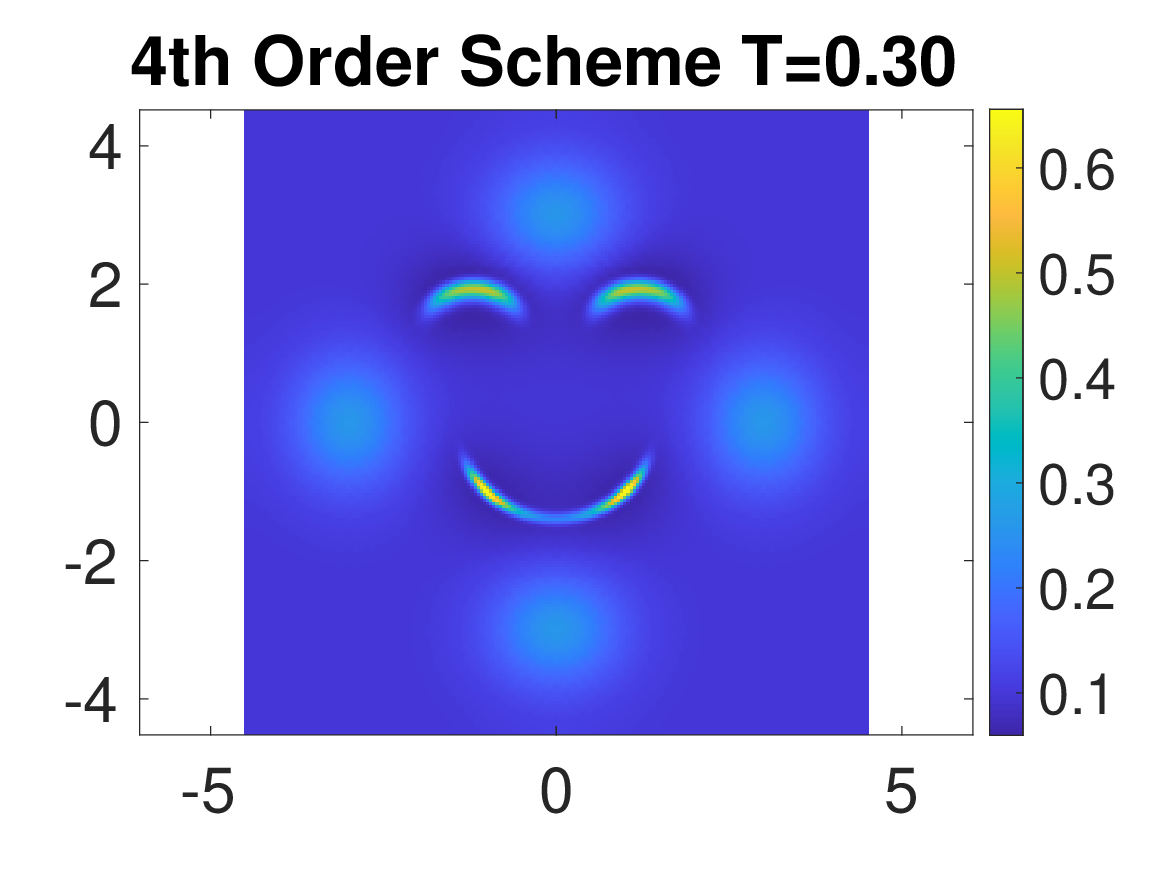}}\\\vspace{-0.1in}
 \subfigure{\includegraphics[scale=0.35]{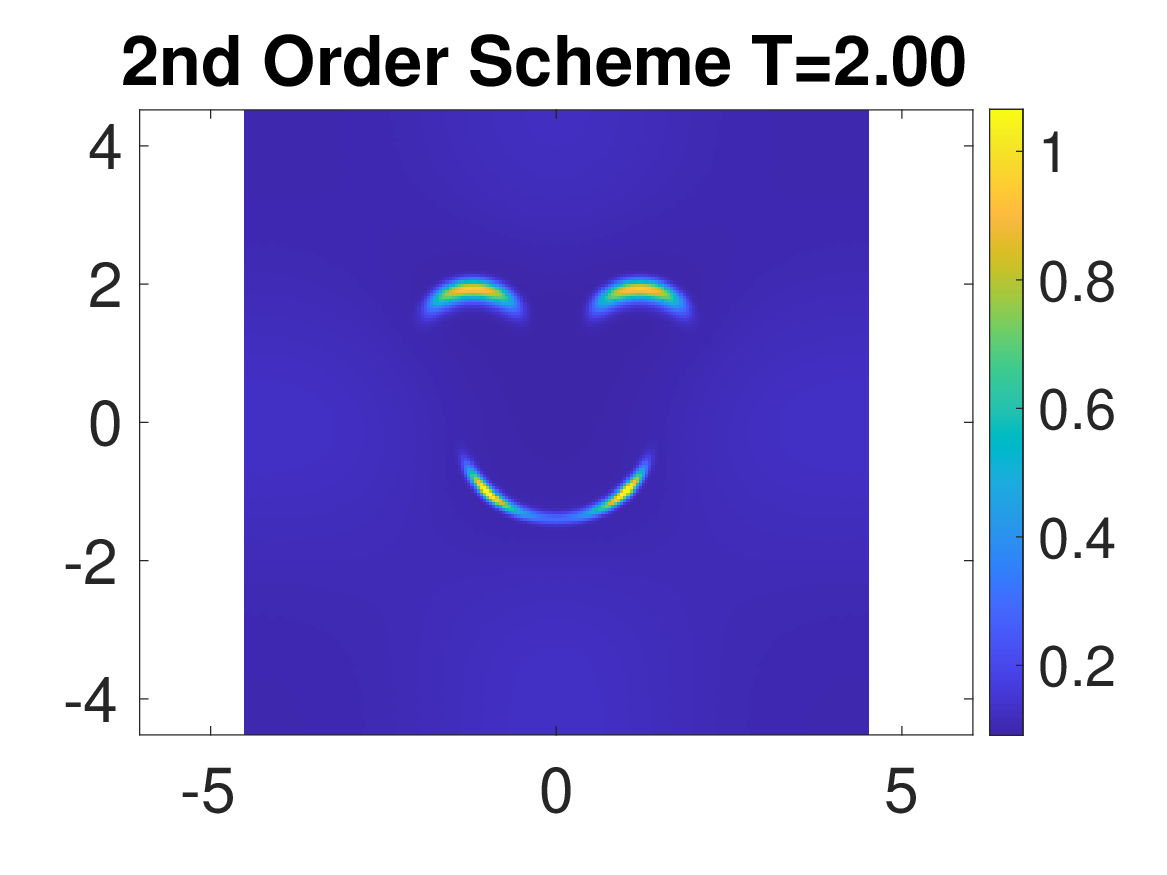} }
 \hspace{-.1in}
 \subfigure{\includegraphics[scale=0.35]{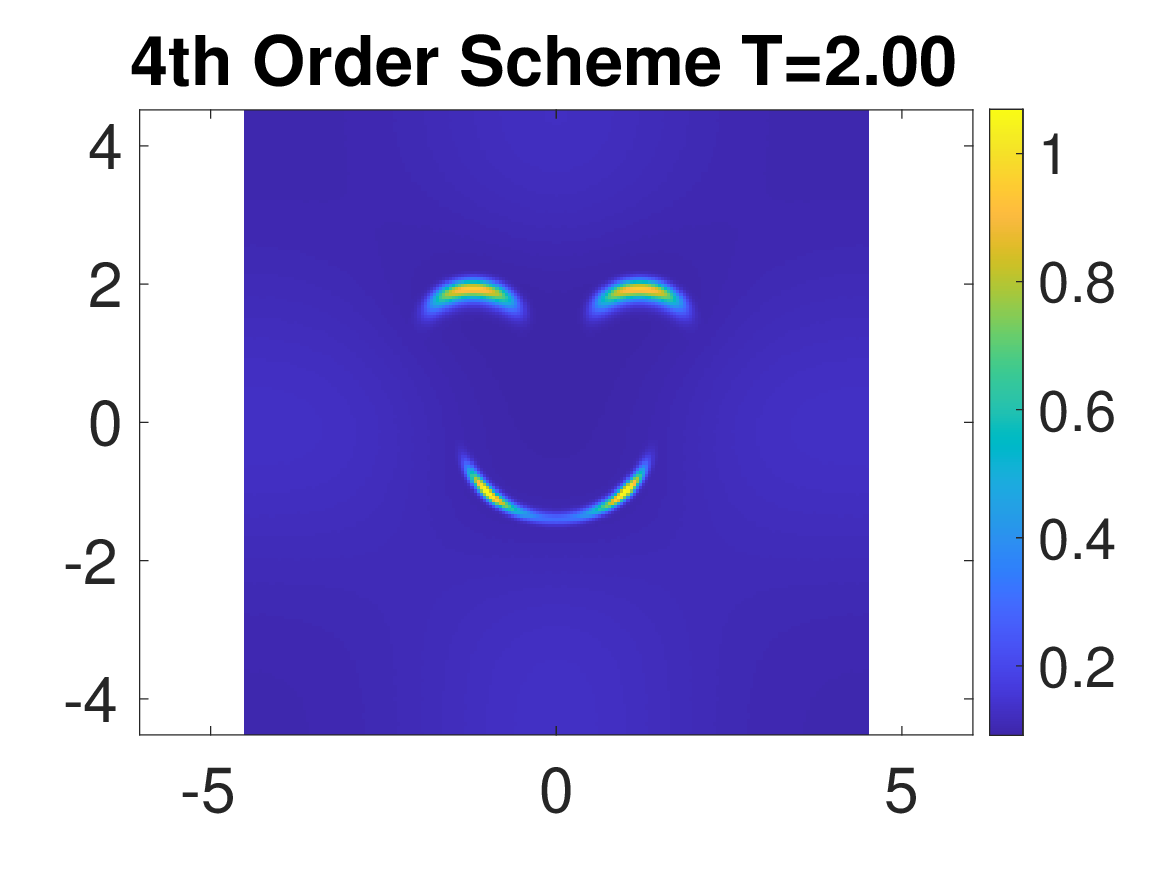}}\\
 \vspace{-0.1in}
  \subfigure{\includegraphics[scale=0.35]{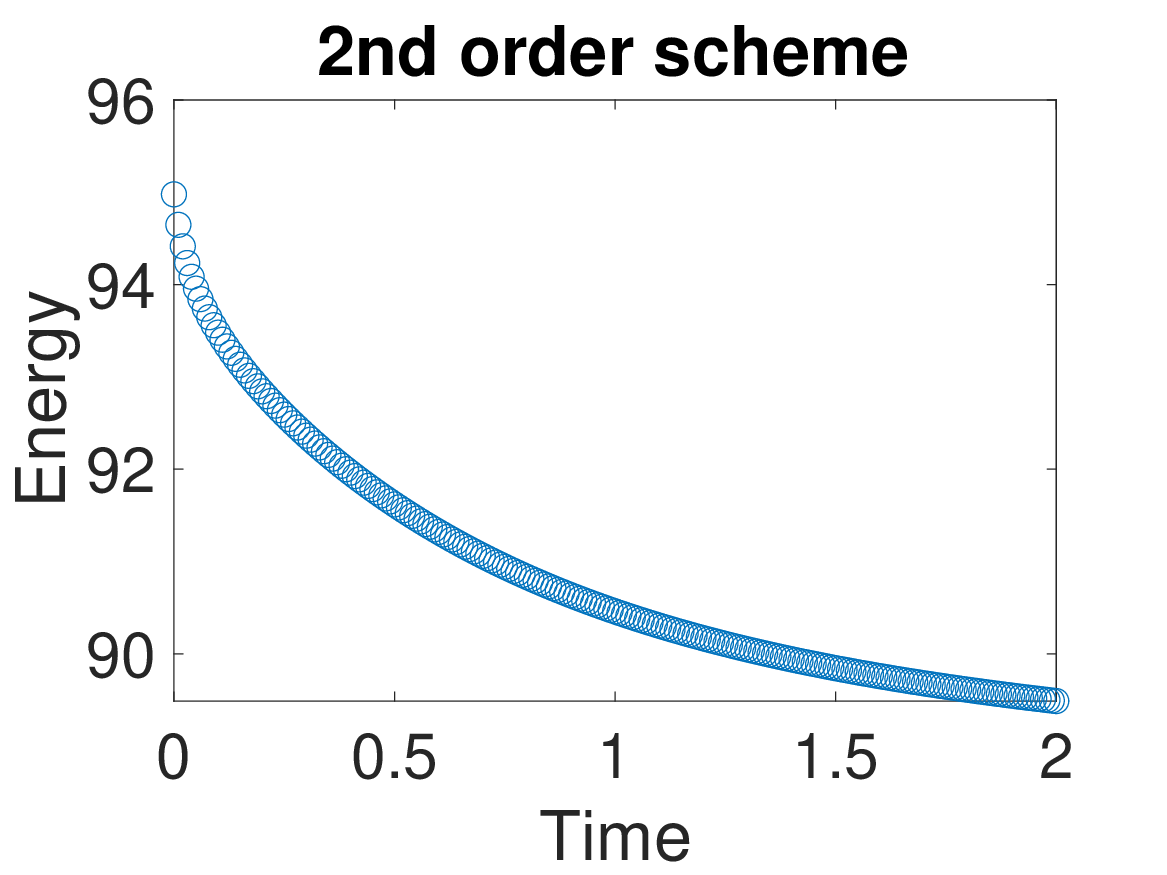} }
 \hspace{-.1in}
 \subfigure{\includegraphics[scale=0.35]{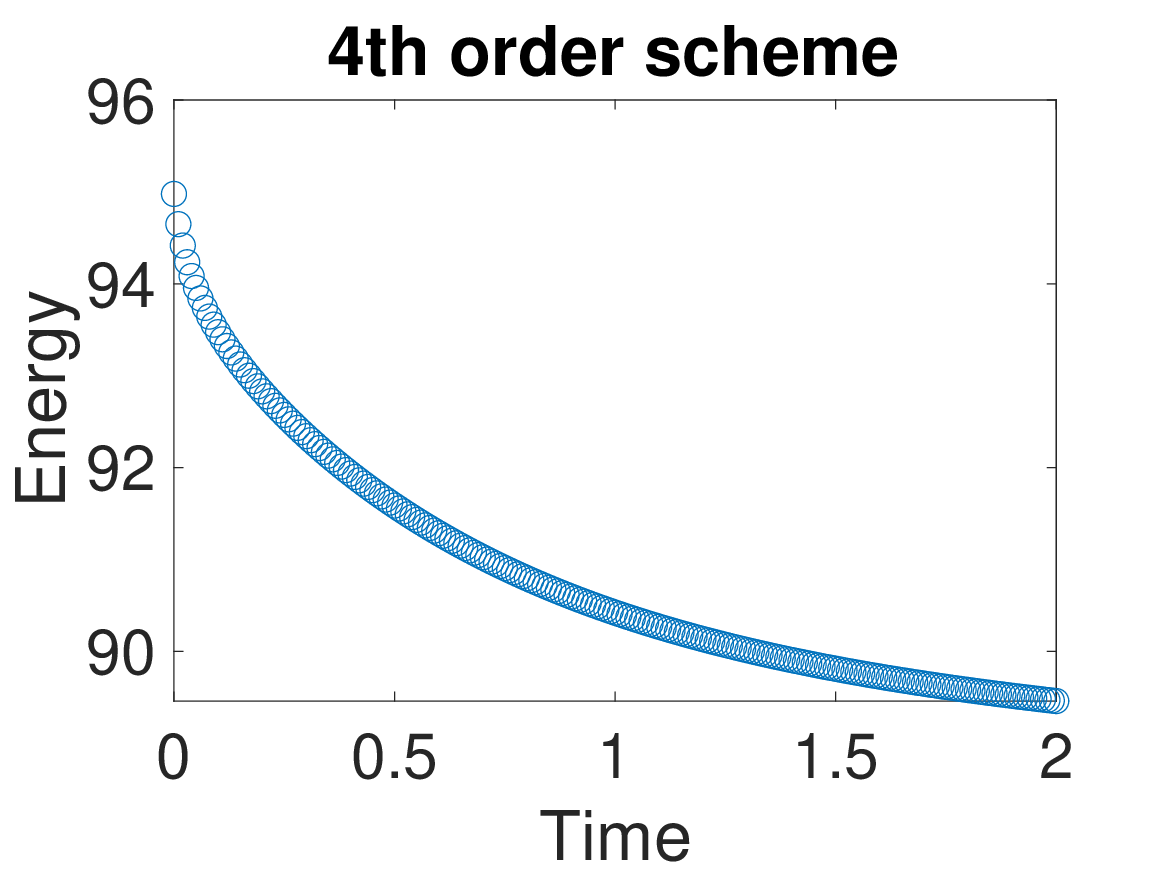}}
 \caption{Numerical solutions to the Fokker-Planck equation with a smiling triple-banana target density \eqref{smile} and the initial density \eqref{smile-initial}. Both the second order scheme \eqref{2d-2nd-scheme} and the fourth order scheme \eqref{2d-4th-scheme} are used on the same $201\times 201$ grid with $\Delta t=0.01$.  }
\label{FP-figure}
\end{center}
 \end{figure}

Next we consider a different example, in which the fourth order spatial discretization can produce visually better results than the second order one.  
The computational domain is $[-3,3]\times[-3, 3]$, the diffusion constant is $D=0.5$, and the velocity filed is defined by derivatives of the stream function \eqref{psi} with $A=0.2$ and $k=1$. 

Now the initial data is chosen to be
\begin{equation}\label{initial-2}
\rho^0(x,y)= \frac12 e^{-16(x+1)^2-4y^2}+ \frac12 e^{-16(x-1)^2-4y^2}+ e^{-x^2/4-(y+3)^2}+ e^{-x^2/4-(y-3)^2}+0.1,
\end{equation}
while the target density is
\begin{equation}\label{target-2}
\mathcal M (x,y)= e^{-(x+3)^2-y^2/4}+ e^{-(x-3)^2-y^2/4}+ \frac12 e^{-4x^2-16(y+1)^2}+ \frac12 e^{-4x^2-16(y-1)^2}+0.1.
\end{equation}

See Figure \ref{FP-figure2} for the numerical solutions of the second example. The second order scheme on the finest mesh $301\times 301$ grid with time step $\Delta t=0.005$ can be regarded as the reference solution. On the same coarse $101\times 101$ grid with the same time step $\Delta t=0.02$, we observe that the second order scheme produces a wrong solution, while the fourth order scheme produces a better solution. A preconditioned conjugate gradient method is used to solve the linear systems in the semi-implicit schemes, and the cost for both second order and fourth order schemes on the same grid is about the same. Thus the fourth order scheme has clear advantages, even though the time discretization is only first order.

       \begin{figure}[!htbp]
      \begin{center}
 \subfigure[The second order scheme on a $101\times 101$ grid with $\Delta t=0.02$.]{\includegraphics[scale=0.32]{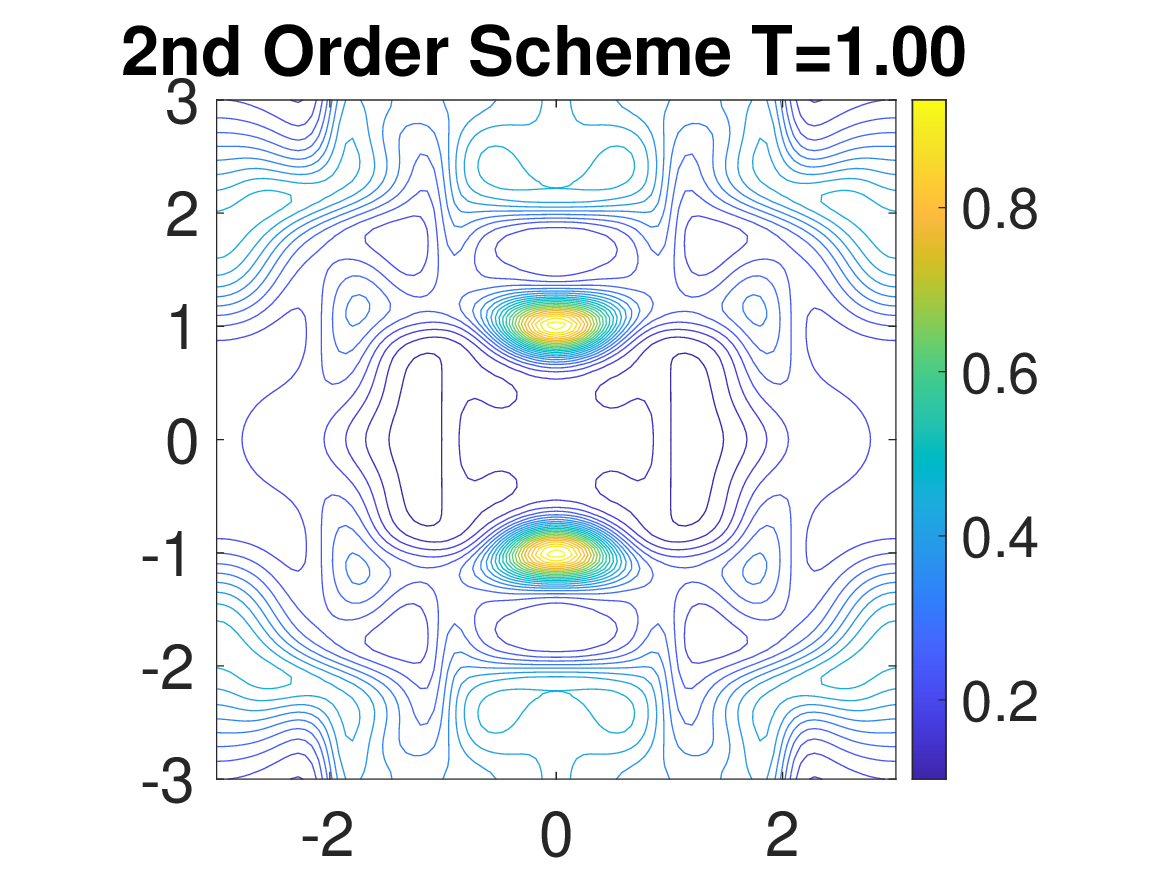} }
  \subfigure[The fourth order scheme on a $101\times 101$ grid with $\Delta t=0.02$.] {\includegraphics[scale=0.32]{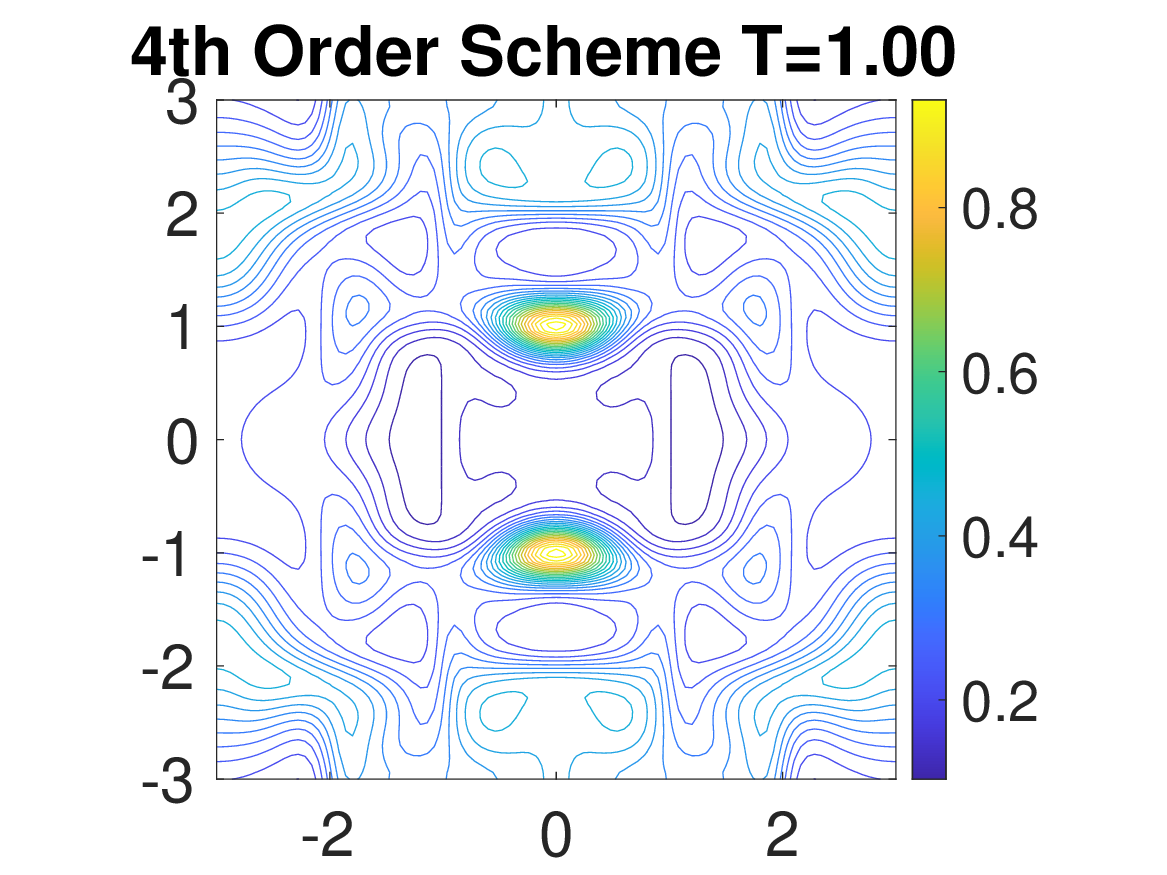}}\\\vspace{-0.1in}
   \subfigure[Zoomed-in of Figure (a).]{\includegraphics[scale=0.32]{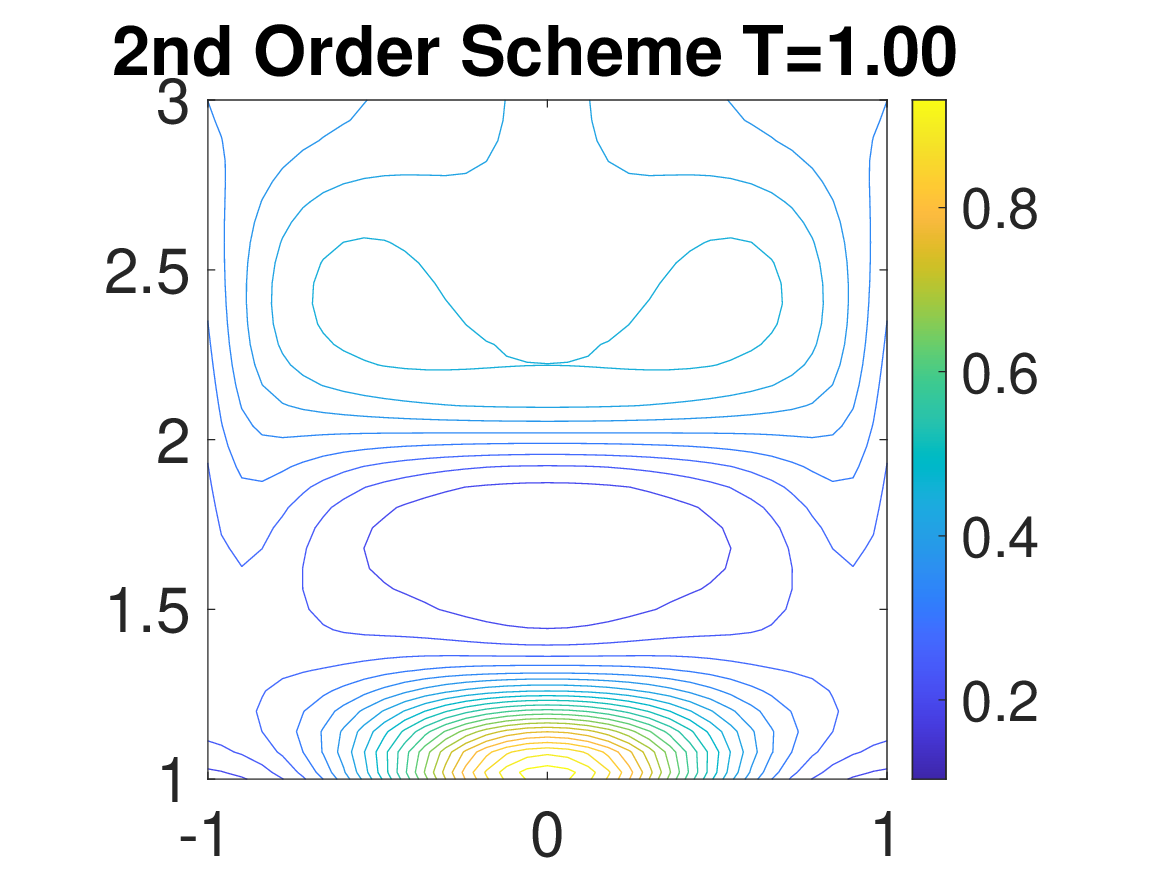} }
    \subfigure[Zoomed-in of Figure (b).]{\includegraphics[scale=0.32]{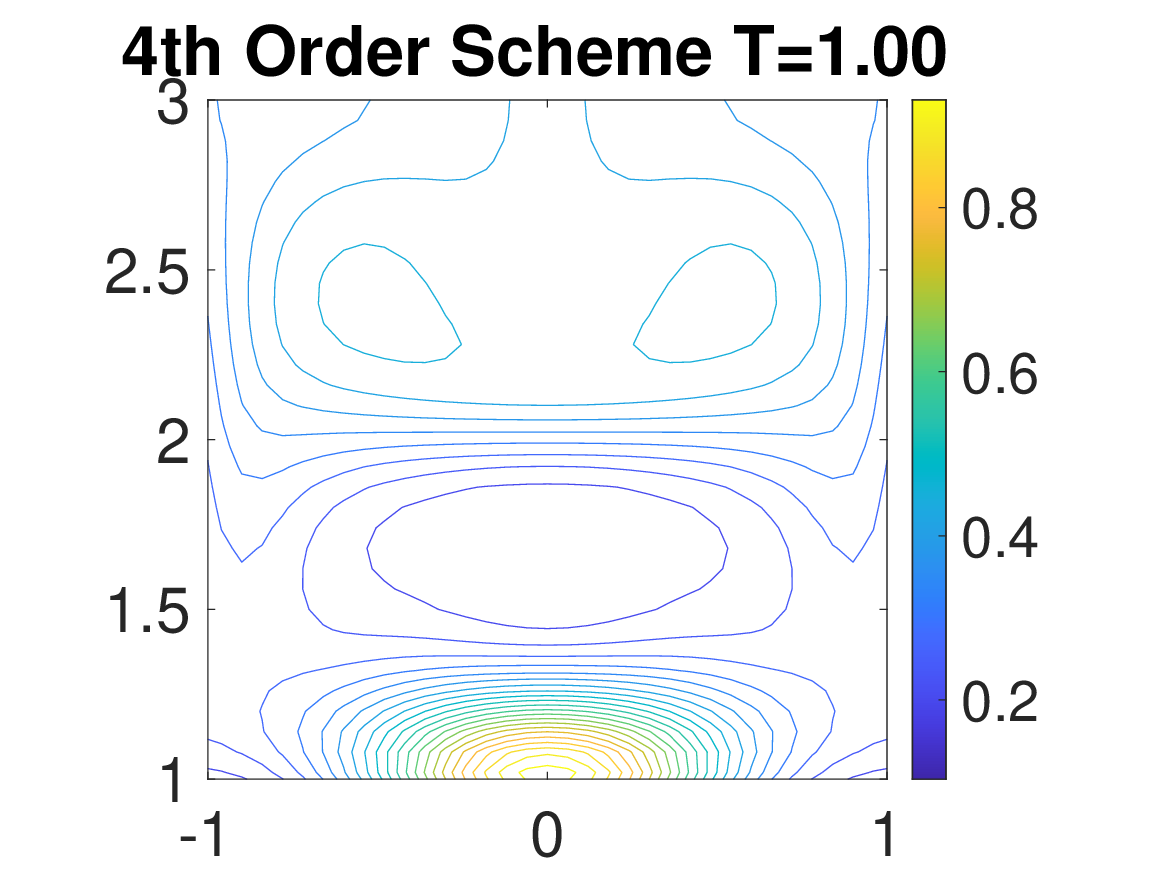} }\\\vspace{-0.1in}
 \subfigure[The second order scheme on a $201\times 201$ grid with $\Delta t=0.01$.] {\includegraphics[scale=0.32]{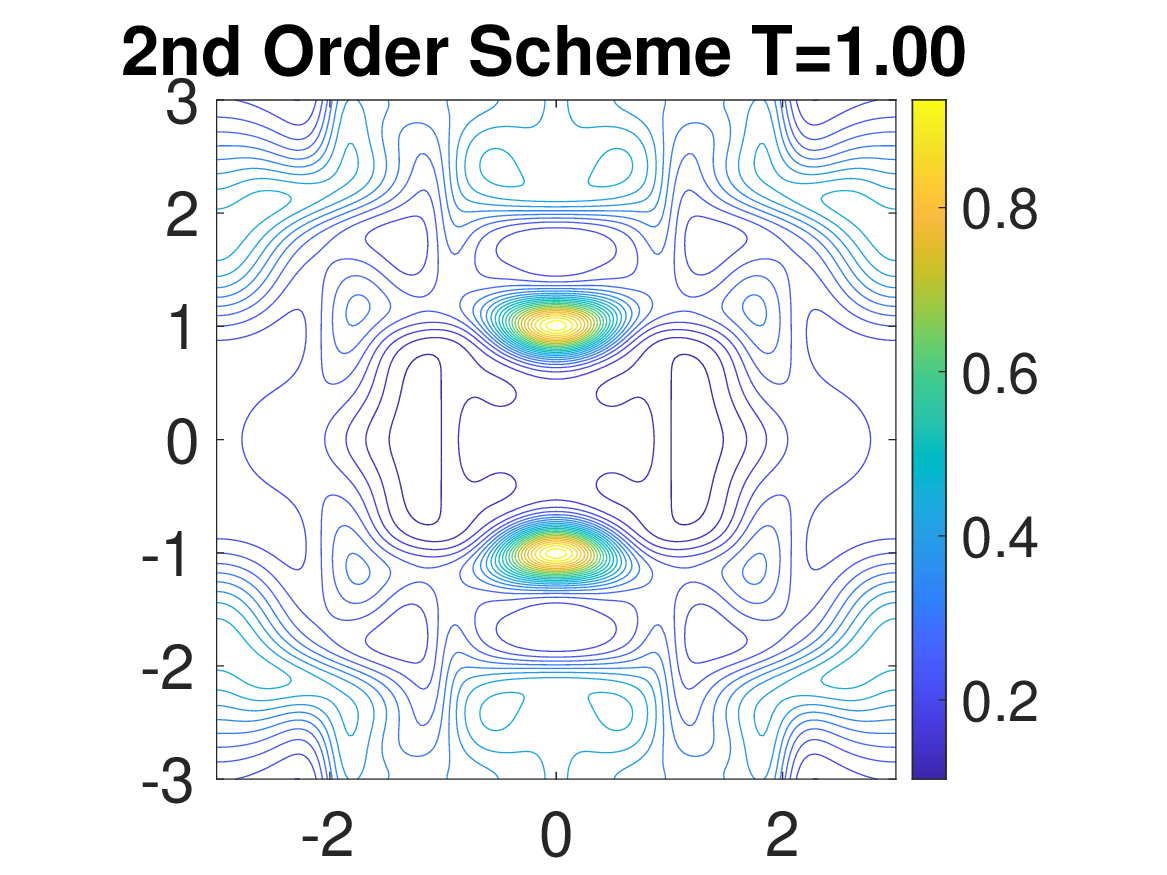} }
 \subfigure[The second order scheme on a $301\times 301$ grid with $\Delta t=0.005$.]{\includegraphics[scale=0.32]{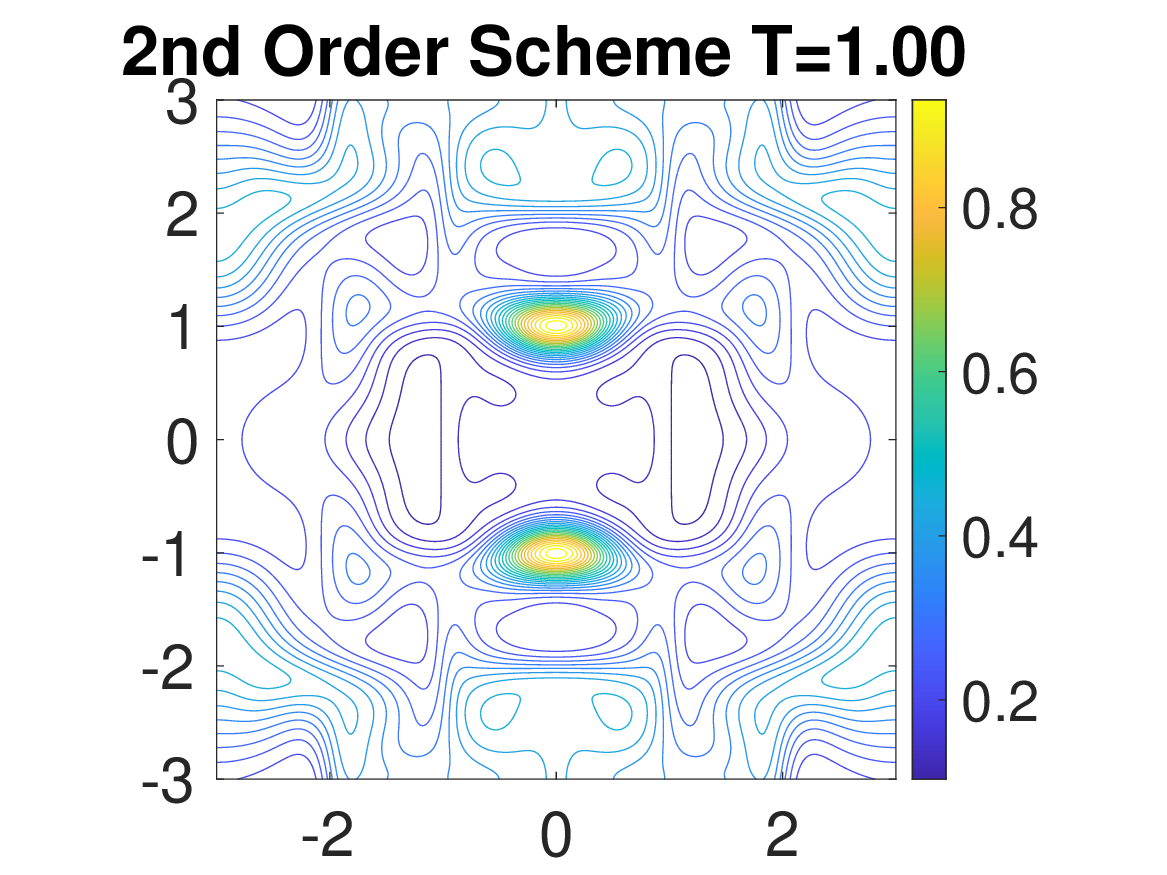}}\\\vspace{-0.1in}
 
 \caption{Numerical solutions to the Fokker-Planck equation with the target density \eqref{target-2} and the initial density \eqref{initial-2}.
 The solution on the finest grid in Figure (f) can be regarded as the reference solution. By comparing Figures (a), (b), (c) and (d) with the reference solution in (f), 
 we can observe that the fourth order scheme   \eqref{2d-4th-scheme} produces better results than the second order scheme  \eqref{2d-2nd-scheme}   on a coarse $101\times 101$ grid.   }
\label{FP-figure2}
\end{center}
 \end{figure}

\section{Concluding remarks}
\label{sec-remark}
In this paper, we have constructed second order and fourth order space discretization via finite difference implementation of the finite element method for solving   Fokker-Planck equations associated with  irreversible processes. Under mild mesh conditions and time step constraints for smooth solutions, the high order schemes are proved to be monotone, thus are positivity-preserving and energy dissipative. Even though the time discretization is only first order, numerical tests suggest that the fourth order spatial scheme produces better solutions than the second order one on the same grid. The high order schemes proposed in this paper preserve all the good properties just as the classical first order upwind schemes, such as (\emph{i}) the conservation of total mass, (\emph{ii}) the positivity of $\rho$, (\emph{iii}) the energy dissipation law with respect to $\phi$-entropy, and (\emph{iv}) exponential convergence to equilibrium $\invm$. Those properties are  important but difficult to obtain for high order space discretizations, particularly for irreversible drift-diffusion processes. This also enables the future studies for sampling acceleration and variance reduction using  irreversible processes with high order numerical schemes.

\section*{Data Availibility} All data generated or analysed during this study are included in this  article.

\newpage
\printbibliography
\end{document}